\documentclass[12pt] {amsart}

\usepackage{amsmath}
\usepackage{amsfonts}
\usepackage{amssymb}
\usepackage{latexsym}
\newtheorem{theorem}{Theorem}[section]
\newtheorem{lemma}[theorem]{Lemma}
\newtheorem{proposition}[theorem]{Proposition}
\newtheorem{corollary}[theorem]{Corollary}

\newtheorem{remark}[theorem]{Remark}
\numberwithin{equation}{section}

\textwidth=16.5cm \textheight=22.5cm \hoffset=-17mm \voffset=-16mm
\headheight=14pt \headsep=22pt

\def\sD{{\mathfrak D}}      
   \def\sH{{\mathfrak H}}   
      \def\sL{{\mathfrak L}}
\def\sM{{\mathfrak M}}   \def\sN{{\mathfrak N}}

      \def\dC{{\mathbb C}}

   \def\dN{{\mathbb N}}   
      \def\dR{{\mathbb R}}

\def\cA{{\mathcal A}}   \def\cB{{\mathcal B}}   \def\cC{{\mathcal C}}
\def\cD{{\mathcal D}}      
   \def\cH{{\mathcal H}}   
   \def\cK{{\mathcal K}}   \def\cL{{\mathcal L}}
   \def\cN{{\mathcal N}}   
      \def\cR{{\mathcal R}}
      \def\cU{{\mathcal U}}

\def\cdom{{\rm \overline{dom}\,}}
\def\bB{{\mathbf B}}

\def\RE{{\rm Re\,}}

\def\wt{\widetilde}
\def\wh{\widehat}

\def\f{\varphi}
\def\uphar{{\upharpoonright\,}}

\def\ran{{\rm ran\,}}
\def\dom{{\rm dom\,}}
\def\cran{{\rm \overline{ran}\,}}

\usepackage{color}

\def\*{{\phantom *}}

\begin{document}
\thispagestyle{empty}
\phantom{.} 

\vskip 0.5cm

\title
 {Around the Van Daele--Schm\"{u}dgen theorem}

\author{Yury~Arlinski\u{i}}
\address{Department of Mathematical Analysis \\
East Ukrainian National University \\
Kvartal Molodyozhny 20-A \\
Lugansk 91034 \\
Ukraine} \email{yury.arlinskii@gmail.com}

\author{Valentin A.~Zagrebnov}
\address{D\'{e}partement de Math\'{e}matiques -
Universit\'{e} d'Aix-Marseille and Institut de Math\'{e}matiques de
Marseille (LATP) - UMR 7353, CMI - Technop\^{o}le Ch\^{a}teau-Gombert, 39
rue F. Joliot Curie, 13453 Marseille Cedex 13, France}
\email{Valentin.Zagrebnov@univ-amu.fr}

\subjclass[2010]{47A05, 47A07, 47A20, 47A64, 47B25}

\keywords{Operator range, von Neumann theorem, Van
Daele--Schm\"{u}dgen theorem, parallel addition, shorted operator,
lifting, Friedrichs extension, Kre\u\i n extension}

\begin{abstract}For a \textit{bounded} non-negative self-adjoint operator acting in a complex, infinite-dimensional,
separable Hilbert space $\cH$ and possessing a dense range $\cR$  we
propose a new approach to characterisation of phenomenon concerning
the existence of subspaces $\sM\subset \cH$ such that
$\sM\cap\cR=\sM^\perp\cap\cR=\{0\}$. We show how the existence of
such subspaces leads to various \textit{pathological} properties of
\textit{unbounded} self-adjoint operators related to von Neumann theorems \cite{Neumann}--\cite{Neumann2}.
{We revise the von Neumann-Van Daele-Schm\"{u}dgen assertions \cite{Neumann}, \cite{Daele}, \cite{schmud}
to refine them.}
We also develop {a new systematic approach,} which allows to construct for any \textit{unbounded}
densely defined symmetric/self-adjoint operator $T$ \textit{infinitely} many pairs $\langle T_1 , T_2 \rangle$
of its closed densely defined restrictions $T_k\subset T$ such that
$\dom(T^* T_{k})=\{0\}$ ($\Rightarrow\dom T_{k}^2=\{0\}$) $k=1,2$
and $\dom T_1\cap\dom T_2=\{0\}$, $\dom T_1\dot+\dom T_2=\dom T$.

\end{abstract}
\maketitle
\tableofcontents

\section{Introduction}\label{Int}
Throughout this paper we consider infinite-dimensional and separable Hilbert spaces over the field $\dC$ of complex numbers.
If $\cH$ is a Hilbert space, then its (proper) linear subset $\mathfrak{M} \subset\cH$ is called a \textit{linear manifold}.
The closure $\overline{\mathfrak{M}}$ in topology of $\cH$ is itself a Hilbert space. We call this \textit{closed} linear
manifold a \textit{subspace} of the space $\cH$. Let $\mathfrak{M}_1$ and
$\mathfrak{M}_2$ be linear manifolds of $\cH$. Then $\mathfrak{M}_1 + \mathfrak{M}_2$ denotes the \textit{sum} of manifolds,
which is the smallest linear manifold that contains $\mathfrak{M}_1$ and $\mathfrak{M}_2$. If intersection of subsets
$\mathfrak{M}_1$ and $\mathfrak{M}_2$ has only \textit{zero vector} in common, we denote
the sum by $\mathfrak{M}_1 \dot + \mathfrak{M}_2$ and call it the \textit{direct sum} of linear manifolds. If in addition
these two linear manifolds are mutually orthogonal, then we denote their sum
as $\mathfrak{M}_1 \oplus \mathfrak{M}_2$ and we call it the \textit{orthogonal sum}. All these linear operations can be
obviously extended to \textit{subspaces} of $\cH$. Note that the sum $\overline{\mathfrak{M}}_1 + \overline{\mathfrak{M}}_2$,
or the direct sum $\overline{\mathfrak{M}}_1 \dot + \overline{\mathfrak{M}}_2$ of subspaces is not obligatory a
subspace, but it is true for the orthogonal sum $\overline{\mathfrak{M}}_1 \oplus \overline{\mathfrak{M}}_2$.

We use the symbols $\dom T$, $\ran T$, $\ker T$ for manifolds which are respectively domain, range, and null-subspace
of a linear operator $T$. The closures of two first manifolds are denoted by $\cdom T$, $\cran T$.
The identity operator in a Hilbert space $\cH$ is denoted by $I := I_\cH$. If $\sL$ is a subspace of $\cH$,
the orthogonal projection in $\cH$ onto $\sL$ is denoted by $P_\sL$. By $\sL^\perp$ we denote the subspace which is the
orthogonal complement of $\sL$, which is $\sL^\perp = \cH \ominus \sL $. We use notation $T\uphar \cN$ for
restriction of a linear operator $T$ on the set $\cN\subset\dom T$.

A linear operator $\cA$ in a Hilbert space is called \textit{non-negative} (or \textit{positive})
if $(\cA f,f)\ge 0$ for all $f\in \dom\cA$ and it is called \textit{positive definite} if $(\cA f,f)\ge c \|f\|^2$ for
some $c >0$. We write $\cA \geq 0$ if $\cA$ is a non-negative operator. Then the natural order $\cA \leq \mathcal{C}$ of
two positive (bounded) self-adjoint operators is implied by $\mathcal{C} - \mathcal{A} \geq 0$.

The linear space of bounded operators from the Hilbert space $\cH$ to the Hilbert space $\sH$ is denoted by
$\bB(\cH,\sH)$ and the Banach algebra $\bB(\cH,\cH)$ by $\bB(\cH)$. The set of all bounded
self-adjoint non-negative operators in $\cH$ we denote by $\bB^+(\cH)$. Then \textit{non-singular} operators
$\bB^{+}_{0}(\cH) \subset \bB^{+}(\cH)$ is the subset of  $\bB^+(\cH)$ with $\ker B=\{0\}$ . If $T:\cH\to\sH$ is a closed
linear operator in a Hilbert space $\cH$, then we used to consider the linear manifold $\dom T$ as a Hilbert space with
respect to the
\textit{graph inner product}:
\[
(u,v)_T:=(u,v)_\cH+(Tu, Tv)_\sH \ .
\]

\smallskip

{Now we recall two results, which are established by A.Van Daele. The first result demonstrates some
\textit{pathological} properties of \textit{unbounded} operators. It was inspired by the well-known (and somewhat surprising)
J.~von Neumann theorem \cite{Neumann}, which states that for any unbounded self-adjoint operator $\cA$ there is a unitary
operator $U$ such that $\dom \cA$ and $\dom U^* \cA \, U$ have only the \textit{zero} vector in common.}
\begin{theorem}
\label{daele82-1} {\rm\cite [Theorem 2.2]{Daele}}. Let $T$ be a positive self-adjoint operator in the
Hilbert space $\cH$. If $\ker T=\{0\}$ (non-singular operator), then there exists two densely defined closed
symmetric restrictions $S_1$ and $S_2$ of $T$ such that $\dom S_1\cap\dom S_2=\{0\}$.
\end{theorem}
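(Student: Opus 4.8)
I would begin by noting that the conclusion can only hold when $T$ is \emph{unbounded}: if $T\in\bB(\cH)$ then $\dom T=\cH$, the graph norm $\|\cdot\|_T$ is equivalent to $\|\cdot\|_\cH$, every closed densely defined restriction $S\subseteq T$ has $\dom S$ closed and dense in $\cH$, so $S=T$ and $\dom S_1\cap\dom S_2=\cH$. So assume $T$ unbounded and put $B:=(I+T)^{-1}\in\bB^{+}_{0}(\cH)$ and $\cR:=\ran B=\dom T$; then $\cR$ is dense (since $\ker B=\{0\}$) and proper (since $T$ is unbounded, $B$ is not invertible). Because $T\ge 0$ one has $\|u\|_T^2\le\|(I+T)u\|^2\le 2\|u\|_T^2$, so $B$ is a topological isomorphism of $\cH$ onto $(\dom T,(\cdot,\cdot)_T)$; hence a linear manifold $\cN\subseteq\dom T$ is $\|\cdot\|_T$-closed — equivalently, $T\uphar\cN$ is a closed operator — iff $\cN=B\sL$ for a closed subspace $\sL\subseteq\cH$. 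Since $B=B^*$ and $\ker B=\{0\}$, one checks $\ovl{B\sL}=\cH\iff\cR\cap\sL^\perp=\{0\}$, and injectivity of $B$ gives $B\sL_1\cap B\sL_2=B(\sL_1\cap\sL_2)$. Therefore it suffices to find a closed subspace $\sM\subseteq\cH$ with
\[
\sM\cap\cR=\sM^\perp\cap\cR=\{0\};
\]
for then $S_1:=T\uphar B\sM$ and $S_2:=T\uphar B\sM^\perp$ are, by the preceding equivalences, densely defined closed restrictions of the self-adjoint $T$, hence symmetric, with $\dom S_1\cap\dom S_2=B(\sM\cap\sM^\perp)=\{0\}$ (and in fact $\dom S_1\dot + \dom S_2=B(\sM\oplus\sM^\perp)=B\cH=\dom T$, the refinement announced in the abstract).

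To produce such an $\sM$ — the crux of the matter — I would use the spectral measure $E_B$ of $B$. Pick $t_0=\|B\|>t_1>t_2>\cdots\downarrow 0$ and set $\cH_k:=E_B((t_{k+1},t_k])\cH$; since $0$ is a non-isolated point of $\sigma(B)$ (it is not an eigenvalue, as $\ker B=\{0\}$) and $E_B((0,t])\cH$ is infinite-dimensional for every $t>0$ (else $B$ were invertible), the $t_k$ can be chosen so that $\cH=\bigoplus_{k\ge1}\cH_k$ with $\dim\cH_k\ge 2k$ and $\sum_k t_k^2/k<\infty$. On $\cH_k$ one has $t_{k+1}I\le B\le t_kI$, so $v=\sum_k v_k\in\ran B$ forces $\sum_k t_k^{-2}\|v_k\|^2\le\|B^{-1}v\|^2<\infty$; thus $\cR\subseteq\wt\cR:=\{v:\sum_k t_k^{-2}\|v_k\|^2<\infty\}$ and it is enough to arrange $\sM\cap\wt\cR=\sM^\perp\cap\wt\cR=\{0\}$. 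Fix orthonormal vectors $\eta_k^{(1)},\dots,\eta_k^{(k)},\zeta_k^{(1)},\dots,\zeta_k^{(k)}\in\cH_k$ and, writing $c_j:=\big(\sum_{m\ge j}t_m^2/m\big)^{-1/2}$, form the ``spreading'' unit vectors
\[
u^{(j)}:=c_j\sum_{k\ge j}\frac{t_k}{\sqrt k}\,\eta_k^{(j)},\qquad
w^{(j)}:=c_j\sum_{k\ge j}\frac{t_k}{\sqrt k}\,\zeta_k^{(j)}\qquad(j\ge1).
\]
Each of $\{u^{(j)}\}_{j\ge1}$ and $\{w^{(j)}\}_{j\ge1}$ is orthonormal, and $\sM_0:=\cspan\{u^{(j)}:j\ge1\}$ satisfies $\sM_0\cap\wt\cR=\{0\}$: for $v=\sum_j a_j u^{(j)}$ one has $v_k=\frac{t_k}{\sqrt k}\sum_{j\le k}a_jc_j\eta_k^{(j)}$, hence by orthonormality of the $\eta_k^{(j)}$ in $\cH_k$,
\[
\sum_k t_k^{-2}\|v_k\|^2=\sum_k\frac1k\sum_{j\le k}|a_j|^2c_j^2=\sum_j|a_j|^2c_j^2\sum_{k\ge j}\frac1k ,
\]
which is finite only when $a=0$, because $\sum_{k\ge j}1/k=\infty$ for every $j$; the same computation gives $\cspan\{w^{(j)}\}\cap\wt\cR=\{0\}$.

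What remains — and what I expect to be the main obstacle — is to fit these two spreading families together into a single \emph{orthonormal basis} of $\cH$, so that $\sM:=\cspan\{u^{(j)}\}$ has $\sM^\perp=\cspan\{w^{(j)}\}$, whence $\sM^\perp\cap\wt\cR=\{0\}$ as well. The difficulty is that a careless choice leaves an uncovered infinite-dimensional remainder inside $\bigoplus_k\cH_k$, and every such remainder meets $\wt\cR$ nontrivially; eliminating it forces one to tune the dimensions $\dim\cH_k$ and the choice of the vectors $\eta_k^{(j)},\zeta_k^{(j)}$ block by block, and to treat separately the various shapes of $\sigma(B)$ near $0$. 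Once $\{u^{(j)}\}\cup\{w^{(j)}\}$ is an orthonormal basis, $\sM$ has the required property and, by the reduction in the first paragraph, the operators $S_1=T\uphar B\sM$ and $S_2=T\uphar B\sM^\perp$ prove the theorem. This is precisely the ``existence of subspaces $\sM$ with $\sM\cap\cR=\sM^\perp\cap\cR=\{0\}$'' phenomenon that the paper sets out to organise systematically.
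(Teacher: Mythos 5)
Your reduction in the first paragraph is correct and is essentially the same as the paper's own (cf.\ Remarks \ref{daele82-2bis} and \ref{daele82-2-SchmTh1}): with $B=(I+T)^{-1}$, the operators $S_1=T\uphar B\sM$ and $S_2=T\uphar B\sM^\perp$ are closed, symmetric, densely defined, with trivially intersecting domains, \emph{provided} one has a subspace $\sM$ with $\sM\cap\dom T=\sM^\perp\cap\dom T=\{0\}$. But that existence statement -- property \eqref{Sch-thm1}, i.e.\ the paper's Theorem \ref{SchmTh333}/\ref{SchmTh3} -- is the whole content of the theorem, and your construction does not establish it.

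Concretely: $\sM_0:=\cspan\{u^{(j)}\}$ does satisfy $\sM_0\cap\wt\cR=\{0\}$, but $\sM_0^\perp$ is far larger than $\cspan\{w^{(j)}\}$. Inside each column $\cspan\{\eta_k^{(j)}:k\ge j\}$ you use only the single vector $u^{(j)}$, so the orthogonal complement of $\sM_0\oplus\cspan\{w^{(j)}\}$ is infinite dimensional; worse, it meets $\ran B=\dom T$ itself, not merely $\wt\cR$: any nonzero finite linear combination of the $\eta_k^{(j)}$ (with $k$ ranging over finitely many blocks) chosen orthogonal to $u^{(j)}$ lies in finitely many spectral subspaces $E_B((t_{k+1},t_k])\cH\subset\ran B$ and is orthogonal to every $u^{(i)}$ and $w^{(i)}$. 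Hence $\sM_0^\perp\cap\dom T\ne\{0\}$ for the $\sM_0$ you actually built, and the step you defer -- ``fit the two spreading families into a single orthonormal basis'' -- is exactly the hard part of the theorem; it cannot be fixed by tuning $t_k$ or $\dim\cH_k$, because one spreading vector per column can never exhaust that column, so a genuinely different (doubly indexed) spreading scheme is needed. (There is also a small indexing slip: $\bigoplus_{k\ge1}E_B((t_{k+1},t_k])\cH$ omits $E_B((t_1,\|B\|])\cH$.) The paper sidesteps all of this: it quotes the von Neumann Theorem \ref{NeumTh2} to get a unitary $U$ with $\cR\cap U\cR=\{0\}$, sets $B=UAU^*$ for a suitable $A\in\bB^+_0(\cH)$ with $\ran A^{1/2}=\cR$, and reads the projection $P_\sM$ off the representation $A=(A+B)^{1/2}P_\sM(A+B)^{1/2}$, where $\ker A=\ker B=\{0\}$ and $A:B=0$ give \eqref{zeroint} (Proposition \ref{shm1}, Theorem \ref{SchmTh333}). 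To close your argument you must either invoke such a result (Theorem \ref{SchmTh1} or \ref{SchmTh333}) at this point or actually carry out the orthonormal-basis construction; as written, the crucial existence of $\sM$ is asserted, not proved.
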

{In fact one can see from the proof of this theorem that moreover: it is possible to choose the symmetric densely
defined operators  $S_1$ and $S_2$ in such a way that their ranges $\ran S_1$ and $\ran S_2$
are \textit{orthogonal}. This second result was formulated by Van Daele in \cite {Daele2} as a corollary the following
general assertion:}
\begin{theorem} \label{daele82-2}{\rm{\cite[Proposition 3]{Daele2}}}.
Let $B$ be a positive self-adjoint operator in the Hilbert space $\cH$. Suppose that $\ker B =\{0\}$ and $\ran
B\ne \cH$, i.e. the inverse operator $B^{-1}$ is unbounded. Then there exist two linear manifolds $\sM_1$ and $\sM_2$
of $\dom B$ such that: \\
{{\rm{(i)}} $\sM_1 \bot \sM_2$ ,\\
{\rm{(ii)}} the direct sum $\sM_1 \dot + \sM_2$ is dense in $\cH$ , \\
{\rm{(iii)}} the linear manifolds $B \, \sM_1$ and $B \, \sM_2$ are also dense in $\cH$.}
\end{theorem}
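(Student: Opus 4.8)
The plan is to reduce the statement to a purely geometric fact about \emph{subspaces} and the dense range $\cR := \ran B$, and then exploit the existence of a subspace $\sM$ with $\sM\cap\cR = \sM^\perp\cap\cR = \{0\}$ (the phenomenon advertised in the abstract). First I would set $A := B^{-1}$, an \emph{unbounded}, positive, self-adjoint operator with dense domain $\dom A = \ran B = \cR$; since $\ran B \ne \cH$, $A$ is genuinely unbounded. The conditions we must produce — $\sM_1\perp\sM_2$, $\sM_1\dot+\sM_2$ dense, and $A\sM_1 = B^{-1}\sM_1$, $A\sM_2$ dense — are then seen to be equivalent, after relabelling, to von Neumann's type assertion: we want to split $\cH = \sL \oplus \sL^\perp$ so that $\dom A \cap \sL$ and $\dom A \cap \sL^\perp$ are both \emph{dense} in $\sL$ and $\sL^\perp$ respectively (giving (i),(ii)), while simultaneously the images under $A$ of those dense pieces are dense in $\cH$ (giving (iii)). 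So the real content is: find an orthogonal decomposition of $\cH$ each of whose summands meets $\dom A$ densely and is mapped by $A$ onto a dense set.

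The key step is to manufacture the right subspace. I would start from a subspace $\sM$ with
\[
\sM\cap\cR = \{0\}, \qquad \sM^\perp\cap\cR = \{0\},
\]
whose existence (for $\cR$ dense but $\cR\ne\cH$) is the geometric heart of the paper and may be taken as known from the surrounding development, or constructed directly: pick an orthonormal basis $(e_n)$ adapted to the spectral decomposition of $B$ so that $\cR$ is the set of sequences $(c_n)$ with $(c_n/\lambda_n)\in\ell^2$ (the $\lambda_n\to 0$ along a subsequence because $B^{-1}$ is unbounded), and choose $\sM$ spanned by suitably ``tilted'' vectors $f_k = e_{2k} + \gamma_k e_{2k+1}$ with the ratios $\gamma_k$ and indices chosen so that neither $\sM$ nor $\sM^\perp$ contains a nonzero range vector. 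Then set $\sL := \sM$. For (i) take $\sM_1, \sM_2$ to be dense linear manifolds inside $\sL, \sL^\perp$ respectively contained in $\dom B$; the subtlety is that $\sL\cap\dom B$ need not be dense in $\sL$, so I would instead define $\sM_1 := P_{\sL}(\dom B)\cap\dom B$-type manifolds — more precisely, use that $\dom B = \cH$ here (wait: $B$ is bounded, so $\dom B = \cH$!).

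Let me restart that step with the correct reading: $B\in\bB^+(\cH)$ is \emph{bounded}, so $\dom B = \cH$ and $\sM_1,\sM_2$ are simply required to be mutually orthogonal linear manifolds with dense direct sum such that $B\sM_1, B\sM_2$ are dense. So (i) and (ii) are free once we pick any orthogonal decomposition $\cH = \sL\oplus\sL^\perp$ and put $\sM_1, \sM_2$ dense in the two summands; all the weight lands on (iii): $\clos(B\sL) = \clos(B\sL^\perp) = \cH$, equivalently $\cran(B P_\sL) = \cran(B P_{\sL^\perp}) = \cH$. Taking adjoints, this says $\ker(P_\sL B) = \ker(P_{\sL^\perp} B) = \{0\}$, i.e. $B\,\sL^\perp\cap\ker^\perp$... cleanly: $P_\sL B x = 0 \iff Bx \in \sL^\perp$, and $Bx$ ranges over the dense set $\cR$, so $\ker(P_\sL B) = \{0\} \iff \sL^\perp\cap\cR = \{0\}$ together with $\ker B = \{0\}$. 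Symmetrically $\ker(P_{\sL^\perp}B) = \{0\} \iff \sL\cap\cR = \{0\}$. Thus (iii) holds \emph{precisely} when $\sL = \sM$ is a subspace with $\sM\cap\cR = \sM^\perp\cap\cR = \{0\}$ — exactly the phenomenon in the abstract.

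So the proof assembles as: (1) invoke/construct a subspace $\sM\subset\cH$ with $\sM\cap\cR = \sM^\perp\cap\cR = \{0\}$, using the hypotheses $\ker B = \{0\}$ and $\cR = \ran B$ dense but proper; (2) put $\sM_1$ equal to any countable-dimensional dense linear manifold in $\sM$ and $\sM_2$ a dense linear manifold in $\sM^\perp$ (e.g. orthonormal-basis spans), so (i) and (ii) are immediate; (3) verify (iii) by the adjoint computation above — $\clos(B\sM_1) = \clos(B\sM) \supseteq$ ... actually I must be slightly careful that density of $\sM_1$ in $\sM$ gives density of $B\sM_1$ in $B\sM$'s closure, which holds since $B$ is bounded and continuous; then $\clos(B\sM) = (\ker(P_\sM B))^\perp = \cH$ iff $\sM^\perp\cap\cR = \{0\}$ and $\ker B = \{0\}$, and symmetrically for $\sM_2$. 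The \textbf{main obstacle} is step (1), the actual existence/construction of the subspace $\sM$ with the double-triviality property $\sM\cap\cR = \sM^\perp\cap\cR = \{0\}$; everything else is soft functional analysis. This is why the paper frames its ``new approach'' around precisely that phenomenon — once that subspace is in hand, Theorem~\ref{daele82-2} is a three-line corollary via the $B\sM$/$\cran/\ker$ duality sketched here.
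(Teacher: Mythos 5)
Your main line is exactly the paper's own route: the paper proves Theorem \ref{daele82-2} by applying Proposition \ref{shm01} and Theorem \ref{SchmTh333} to $A=B^2$, i.e.\ it reduces (iii) to the double-triviality property $\sM\cap\ran B=\sM^\perp\cap\ran B=\{0\}$ through the same duality you use ($\overline{B\sM}=\cH\iff\ker(P_\sM B)=\{0\}\iff\sM^\perp\cap\ran B=\{0\}$, using $\ker B=\{0\}$, and symmetrically for $\sM^\perp$), and then quotes the independently established existence of such a subspace $\sM$ (Theorem \ref{SchmTh333}, proved via von Neumann's Theorem \ref{NeumTh2} and parallel addition, so no circularity). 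Taking $\sM_1=\sM$, $\sM_2=\sM^\perp$ then gives (i)--(iii) at once; your passage to dense submanifolds of $\sM$ and $\sM^\perp$ is harmless but unnecessary. Your reading of $B$ as bounded also matches the scope in which the paper itself reproves the statement, namely $B\in\bB^+_0(\cH)$ (the unbounded reading suggested by the phrase ``manifolds of $\dom B$'' is treated in the paper only in the weakened form of Theorem \ref{nov113}).

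One concrete caveat: the stand-alone construction of $\sM$ you sketch as an alternative (``tilted'' vectors $f_k=e_{2k}+\gamma_k e_{2k+1}$ in a spectral basis of $B$) cannot work and must not replace the appeal to Theorem \ref{SchmTh333}. In the diagonal model you describe, every finitely supported vector lies in $\cR=\ran B$ (indeed $e_n=B(\lambda_n^{-1}e_n)$), and each $f_k$ is finitely supported; hence the closed span of the $f_k$ meets $\cR$ nontrivially for \emph{every} choice of the $\gamma_k$. A subspace with $\sM\cap\cR=\sM^\perp\cap\cR=\{0\}$ can contain no nonzero finitely supported vector, so no block-by-block tilting suffices; one needs the genuinely nonlocal input the paper supplies (von Neumann's theorem combined with the parallel-addition representation in the proof of Theorem \ref{SchmTh333}), and in addition a general $B\in\bB^+_0(\cH)$ need not admit an eigenbasis at all. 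With that fallback deleted and the existence of $\sM$ quoted from Theorem \ref{SchmTh333}/\ref{SchmTh3}, your argument coincides with the paper's proof.
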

\begin{remark} \label{daele82-2bis}
Note that if unbounded operator $T$ in Theorem \ref{daele82-1} is boundedly invertible, then one can put
$B:= T^{-1}$ and apply Theorem \ref{daele82-2} for $B \in \bB^{+}_{0}(\cH)$. This means that one can find two
orthogonal linear manifolds $\sM_1,\,\sM_2 \subset \cH$ and define two symmetric operators $S_1$, $S_2$ with dense
domains $\dom S_1:=B \, \sM_1$, $\dom S_2:=H\, \sM_2$ by
restrictions
\begin{equation*}
S_1:=T\uphar\dom S_1 \ \ , \ \ S_2:=T\uphar\dom S_2  \ .
\end{equation*}
Then by construction of operators $S_1$ and $S_2$ the ranges $\ran S_1 =\sM_1$
and $\ran S_2 =\sM_2$ are orthogonal.
\end{remark}
{The next result is due to K.Schm\"{u}dgen. It was apparently motivated by \cite{Neumann} and by the arguments in
\cite {Daele} and \cite{FW}. In paper \cite{schmud} Schm\"{u}dgen proved  the following assertion.}
\begin{theorem}
\label{SchmTh1} {\rm \cite[Theorem 5.1]{schmud}} Let $H$ be a closed unbounded densely defined linear
operator in the Hilbert space $\cH$. Then there exists an orthogonal projection $P$ such that
\begin{equation}\label{Sch-thm1}
P\cH\cap\dom H=(I-P)\cH\cap\dom H=\{0\} \ .
\end{equation}
\end{theorem}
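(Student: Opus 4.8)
The plan is to reduce the statement to the \emph{bounded} setting and then invoke Theorem~\ref{daele82-2}. Since $H$ is closed and densely defined, $H^*H$ is a non-negative self-adjoint operator (von Neumann), and the polar decomposition $H=U(H^*H)^{1/2}$ together with the identity $\dom\big((H^*H)^{1/2}\big)=\dom\big((I+H^*H)^{1/2}\big)$ gives
\[
\dom H=\dom\big((I+H^*H)^{1/2}\big)=\ran\big((I+H^*H)^{-1/2}\big).
\]
Hence, putting $B:=(I+H^*H)^{-1/2}$, we have $B\in\bB^{+}_{0}(\cH)$ (it is bounded, self-adjoint, non-negative, with $\ker B=\{0\}$) and $\ran B=\dom H$. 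Because $H$ is unbounded and closed, $\dom H\ne\cH$ (otherwise the closed graph theorem would force $H$ bounded), so $\ran B$ is a dense \emph{proper} linear manifold and $B^{-1}=(I+H^*H)^{1/2}$ is unbounded. Therefore $B$ satisfies exactly the hypotheses of Theorem~\ref{daele82-2}.

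Next I would apply Theorem~\ref{daele82-2} to $B$, obtaining two mutually orthogonal linear manifolds $\sM_1,\sM_2\subset\cH$ with $\sM_1\dot+\sM_2$ dense in $\cH$ and with $B\,\sM_1$, $B\,\sM_2$ both dense in $\cH$. Set $\sL_1:=\overline{\sM_1}$, $\sL_2:=\overline{\sM_2}$ and $P:=P_{\sL_1}$. Since $\sM_1\perp\sM_2$, the direct sum $\sM_1\dot+\sM_2$ is the orthogonal sum $\sM_1\oplus\sM_2$, whose closure equals $\sL_1\oplus\sL_2$; as this closure is all of $\cH$, we obtain $\sL_2=\sL_1^{\perp}=(I-P)\cH$. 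It then remains to verify $\sL_1\cap\ran B=\sL_2\cap\ran B=\{0\}$, which is precisely \eqref{Sch-thm1} once we recall $\ran B=\dom H$.

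For the first identity, take $x\in\sL_1\cap\ran B$ and write $x=By$. Since $x\in\overline{\sM_1}$ and $\sM_1\perp\sM_2$, we get $x\perp\sM_2$, so $0=(By,m_2)=(y,Bm_2)$ for all $m_2\in\sM_2$, i.e. $y\perp B\,\sM_2$; the density of $B\,\sM_2$ forces $y=0$ and hence $x=0$. Exchanging the roles of $\sM_1$ and $\sM_2$ and using the density of $B\,\sM_1$ yields $\sL_2\cap\ran B=\{0\}$, completing the argument. The only step requiring real input is the reduction in the first paragraph — the classical identifications $\dom H=\dom(H^*H)^{1/2}=\ran(I+H^*H)^{-1/2}$ and the observation that this range is dense but not closed — after which all the work is carried by Theorem~\ref{daele82-2}. (If $H$ is boundedly invertible one may simply take $B=H^{-1}$, recovering Remark~\ref{daele82-2bis}; the role of $(I+H^*H)^{-1/2}$ is to cover non-invertible $H$ as well.)
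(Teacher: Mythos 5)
Your argument is correct, but it takes a genuinely different route from the paper's. The reduction to the bounded setting is in fact exactly the paper's own first step: in the proof of Theorem \ref{111} the authors set $A=(H^*H+I)^{-1}$ and use $\ran A^{1/2}=\dom H$, which is the same operator as your $B=(I+H^*H)^{-1/2}$ (namely $B=A^{1/2}$), together with the closed-graph observation that $\dom H\ne\cH$. The difference is what this bounded operator is fed into. You invoke Van Daele's Theorem \ref{daele82-2} and then convert its conclusion into a projection by closing $\sM_1,\sM_2$ and using the self-adjointness of $B$ with the density of $B\,\sM_1$ and $B\,\sM_2$; this conversion is sound, and it is in substance the equivalence the paper already records in Remark \ref{daele82-2-SchmTh1}, which you extend to non-invertible $H$. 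The paper instead proves the bounded-case statement from scratch: Theorem \ref{SchmTh333} (equivalently Theorem \ref{SchmTh3}) is derived from the von Neumann Theorem \ref{NeumTh2} together with the parallel-addition argument of Proposition \ref{shm1}, i.e. the representation $A=(A+B)^{1/2}P_\sM(A+B)^{1/2}$ for some $B$ with $\ran B^{1/2}\cap\ran A^{1/2}=\{0\}$. What the paper's route buys: it stays within the declared aim of avoiding the spectral/function-theoretic proofs of \cite{Daele}, \cite{Daele2}, \cite{schmud} — your proof ultimately rests on Van Daele's Proposition 3, i.e. on one of the very statements the paper sets out to reprove by purely operator-theoretic means — and it produces much more than bare existence: a continuum of admissible projections, a strongly continuous one-parameter family (Proposition \ref{11}), and monotone chains of such subspaces (Theorem \ref{monot}), which are exactly the refinements exploited later in Sections \ref{Appl} and \ref{VD-Sch-B-N}. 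What your route buys: brevity, and a clean formal deduction of Schm\"{u}dgen's Theorem \ref{SchmTh1} for an arbitrary closed densely defined $H$ from Van Daele's Theorem \ref{daele82-2}, using only the identity $\dom H=\ran(I+H^*H)^{-1/2}$.
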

\begin{remark} \label{daele82-2-SchmTh1}
In fact the statements formulated in Remark \ref{daele82-2bis} and in Theorem \ref{SchmTh1} are equivalent {in the case
when} $\sM_1=\sM$ is a subspace, i.e. $\sM_2=\sM^\perp = \cH \ominus \sM $, and if $H\geq 0$ is a
non-singular, unbounded, self-adjoint operator. \\
Indeed, let $B\in\bB^+_0(\cH)$ with $\ran B\ne\cH$. Then $B$ is invertible and $T:=B^{-1}$ is
unbounded self-adjoint operator $T \geq 0$ with $\dom T = \ran B$. {By Theorem \ref{daele82-2}(i) and by our assumption
that $\sM_1=\sM$, $\sM_2= \cH \ominus \sM $,  Theorem \ref{daele82-2}(iii) yields:}
$(\overline{B\sM^\perp}=\cH) \Leftrightarrow (\forall u \in \sM^\perp \wedge \phi \in \cH ,
(Bu,\phi)=0 \Leftrightarrow \phi = 0)$. Since $B$ is self-adjoint, we have
$(\forall u \in \sM^\perp \wedge \phi \in \cH , (u, B\phi)=0 \Leftrightarrow B\phi \in \sM \cap \ran B)$, and therefore
$(\overline{B\sM^\perp}=\cH \Leftrightarrow \sM \cap \ran B = \{0\})$. The same arguments yield $(\overline{B\sM}=\cH
\Leftrightarrow \sM^\perp \cap \ran B = \{0\})$. Hence, one gets
\begin{multline*}
\overline{B\sM^\perp}=\overline{B\sM}= \cH\iff\sM\cap\ran B=\sM^\perp\cap\ran B=\{0\}\\
\iff \sM\cap\dom T =\sM^\perp\cap\dom T =\{0\} \ ,
\end{multline*}
that gives (\ref{Sch-thm1}) for $\sM = P \cH$ and $T=H$.

On the other hand, let $H\geq 0$ be unbounded, self-adjoint
operator, which is boundedly invertible: $H^{-1}= B$. Then by
Theorem \ref{SchmTh1} there exists an orthogonal projector $P$ such
that
\begin{multline*}
P\cH \cap \dom H=(I-P)\cH \cap \dom H=\{0\}
\iff \sM\cap\ran B =\sM^\perp\cap\ran B =\{0\}\\
\iff \overline{B \sM}=\overline{B \sM^\perp}=\cH  \ ,
\end{multline*}
where $\sM := P \cH$ and  $\sM^\perp = (I-P)\cH $. This coincides
with Remark \ref{daele82-2bis} for $\sM_1 = \sM$ and $\sM_2 =
\sM^\perp$.
\end{remark}
\begin{remark} \label{VD-Schm-Th}
In the present paper we call the statements of Theorems
\ref{daele82-1}-\ref{SchmTh1} and of Remark \ref{daele82-2bis} as
the {\rm{Van Daele--Schm\"{u}dgen Theorem}}. Our aim is to develop a
new systematic approach to treat the pathologies of unbounded
operators, which is motivated by this Theorem.
\end{remark}
{Note that using Theorem \ref{SchmTh1} and the Cayley transformation Schm\"{u}dgen  also proved in \cite{schmud}
an extended version of the Van Daele Theorem \ref{daele82-1}. It is related to the \textit{domain triviality} problem
of the square of symmetric operator. This problem was formulated and studied for the first time in
\cite{Naimark1}, \cite{Naimark2}, \cite{Dix2}, \cite{Chern}.}
\begin{theorem} \label{SchmTh2} {\rm \cite[Theorem 5.2]{schmud}}. For each unbounded
self-adjoint operator $H$ in $\cH$ there exists closed densely defined restrictions of $H$ to symmetric
operators $H_1$ and $H_2$ such that
\begin{equation}\label{Sch-thm2}
\dom H_1\cap\dom H_2=\{0\} \quad \mbox{and} \quad \dom H^2_1=\dom H^2_2=\{0\} \ .
\end{equation}
\end{theorem}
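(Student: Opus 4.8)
The plan is to deduce Theorem~\ref{SchmTh2} from Theorem~\ref{SchmTh1} by realising the domain of $H$ as an operator range inside the graph Hilbert space. Since $H$ is self-adjoint and unbounded, the operator $R:=(I+H^{2})^{-1/2}$ lies in $\bB^{+}_{0}(\cH)$, its range $\ran R=\dom(I+H^{2})^{1/2}=\dom H$ is dense in $\cH$ but is a \emph{proper} linear manifold (otherwise $H$ would be bounded), and for $f=Rg\in\dom H$ one has $(f,f)_{H}=\|(I+H^{2})^{1/2}f\|^{2}=\|g\|^{2}$. So $R$ is an isometric isomorphism of $\cH$ onto the graph Hilbert space $\cH_{H}:=(\dom H,(\cdot,\cdot)_{H})$, and hence the subspaces of $\cH_{H}$ are exactly the manifolds $R\sN$ with $\sN$ a subspace of $\cH$. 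Using $R=R^{*}$ and $\ker R=\{0\}$ one checks the two translations I will need: $R\sN$ is dense in $\cH$ (in the original norm) if and only if $\sN^{\perp}\cap\ran R=\{0\}$, and $R\sN_{1}\cap R\sN_{2}=R(\sN_{1}\cap\sN_{2})$.

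Next I would apply Theorem~\ref{SchmTh1} to the closed, densely defined, unbounded operator $H$: it yields an orthogonal projection $P$ with $P\cH\cap\dom H=(I-P)\cH\cap\dom H=\{0\}$. Put $\sN_{1}:=P\cH$, $\sN_{2}:=(I-P)\cH=\sN_{1}^{\perp}$, so $\sN_{j}\cap\dom H=\{0\}$ for $j=1,2$, $\sN_{1}^{\perp}=\sN_{2}$, $\sN_{2}^{\perp}=\sN_{1}$, $\sN_{1}\cap\sN_{2}=\{0\}$, and define $\cD_{k}:=R\sN_{k}$, $H_{k}:=H\uphar\cD_{k}$ for $k=1,2$. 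Then each $\cD_{k}$ is a subspace of $\cH_{H}$, i.e.\ closed in the graph norm, so $H_{k}$ is closed; since $\sN_{k}^{\perp}\cap\ran R=\sN_{k}^{\perp}\cap\dom H=\{0\}$, each $\cD_{k}$ is dense in $\cH$, so $H_{k}$ is densely defined; and being a restriction of the self-adjoint operator $H$, each $H_{k}$ is symmetric. Moreover $\dom H_{1}\cap\dom H_{2}=R(\sN_{1}\cap\sN_{2})=\{0\}$.

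It then remains to check $\dom H_{k}^{2}=\{0\}$. Take $f\in\dom H_{k}^{2}$, i.e.\ $f\in\cD_{k}$ and $Hf\in\cD_{k}$; in particular $Hf\in\dom H$, so $f\in\dom H^{2}$. By the spectral theorem $\dom\bigl(H(I+H^{2})^{1/2}\bigr)=\dom H^{2}$, hence $g_{1}:=R^{-1}f=(I+H^{2})^{1/2}f\in\dom H$; on the other hand $f\in R\sN_{k}$ gives $g_{1}=R^{-1}f\in\sN_{k}$. Thus $g_{1}\in\sN_{k}\cap\dom H=\{0\}$, so $f=Rg_{1}=0$, which is the claim. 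As a byproduct this construction even yields $\dom H_{1}\,\dot+\,\dom H_{2}=R(\sN_{1}+\sN_{2})=R\cH=\dom H$, slightly strengthening the statement.

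The only point requiring real care is the domain bookkeeping in the last step: one must be certain that $f\in\dom H_{k}^{2}$ genuinely forces $R^{-1}f\in\dom H$, so that the trivial-intersection property $\sN_{k}\cap\dom H=\{0\}$ can be brought to bear; this rests on the functional-calculus identity $\dom\bigl(H(I+H^{2})^{1/2}\bigr)=\dom H^{2}$. Everything else is a direct transcription of Theorem~\ref{SchmTh1} through the isometry $R\colon\cH\to\cH_{H}$, so I expect the write-up to be short.
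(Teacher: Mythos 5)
Your proof is correct. Note that the paper itself does not reprove Theorem \ref{SchmTh2}: it quotes it from Schm\"{u}dgen and instead establishes strengthened versions through Theorem \ref{appl1} and Theorem \ref{novsch}. Your construction is in substance the same as theirs: with $T=H^2$ and $S=(I-T)(I+T)^{-1}$ one has $(I+S)^{1/2}=\sqrt{2}\,(I+H^2)^{-1/2}$, so the paper's restriction domains $\dom T_k^{1/2}=(I+T)^{-1/2}\sM$ and $(I+T)^{-1/2}\sM^\perp$ (see Remark \ref{svezh}) coincide with your $R\sN_1$, $R\sN_2$, and in both cases the key input is a subspace having trivial intersections with $\dom H$, i.e.\ Theorem \ref{SchmTh1}, which the paper reproves as Theorem \ref{111}. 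Where you genuinely differ is the verification of square-triviality: you argue directly via the functional-calculus identity $\dom\bigl(H(I+H^{2})^{1/2}\bigr)=\dom H^{2}$, and since your argument only uses $f\in\dom H_k\cap\dom H^2$, it in fact delivers the stronger conclusions $\dom H_k\cap\dom H^2=\{0\}$ and $\dom(HH_k)=\{0\}$ at no extra cost, as well as the byproduct $\dom H_1\dot+\dom H_2=\dom H$. The paper instead routes through the auxiliary non-negative self-adjoint operators $T_k=H_k^*H_k$ and the norm identities $\|T_k^{1/2}g\|=\|T^{1/2}g\|$, which is longer but buys additional structure: the graph-orthogonal decomposition $\cD[T]=\cD[T_1]\oplus_{T^{1/2}}\cD[T_2]$, the resolvent identity of Remark \ref{svezh}, and the Friedrichs/Kre\u\i n extension statements when $\ran H=\cH$. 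So your write-up is a legitimately more elementary and self-contained proof of exactly the statement of Theorem \ref{SchmTh2}, while the paper's machinery is aimed at the finer properties beyond it.
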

Later, J.R.Brasche and H.Neidhardt \cite{BraNeidh} showed that this result remains true if the condition of self-adjoint
operator $H$ is replaced in Theorem \ref{SchmTh2} by a closed symmetric, but non-self-adjoint operator.

Note also that the first assertion in (\ref{Sch-thm2}) was proved by Van Daele \cite{Daele} under additional assumptions:
$H\geq 0$ and $\ker H=\{0\}$, see Theorem \ref{daele82-1}.

Remark that original proofs of Theorems \ref{daele82-1}, \ref{daele82-2}, and \ref{SchmTh1} are essentially based on
the \textit{spectral decompositions} of self-adjoint operators and the theory of functions (Fourier series,
analytic functions etc). In the present paper {we first elucidate and then we give a new proof of the
Van Daele--Schm\"{u}dgen theorem. The proof includes also a generalisation of this theorem.} To this aim we use only operator
methods. Our approach uses two key ingredients:
\begin{enumerate}
\item
{The classical von Neumann theorem \cite{Neumann} {(see also \cite{Neumann1}, \cite{Neumann2})}, which in particular states
that for any unbounded self-adjoint operator $A$ with a dense domain in $\cH$ there exists a densely defined self-adjoint
operator $B$ such that intersection of their domains is trivial: $\dom A\cap\dom B=\{0\}$.}
\item The notion and properties of a \textit{parallel addition} operation for two bounded non-negative
self-adjoint operators \cite{AD},\cite{AT}.
\end{enumerate}

{As we mentioned above the von Neumann theorem  states in particular that for any \textit{unbounded} self-adjoint
operator $H$ there exists a unitary $U$ such that $\dom H\cap\dom(U^*HU)=\{0\}$. Then setting $J:=2P-I$ for projection
$P$ satisfying (\ref{Sch-thm1}), we obtain as a corollary a refined version of this theorem: there exists a \textit{unitary}
and \textit{self-adjoint} operator $J$ such that $ \ \dom H \cap \dom (J H J) = \{0\}$ }, see Sections \ref{triv-intersec}
and \ref{Appl}.

Our arguments allow to obtain more details about properties of restrictions of self-adjoint operators treated in Theorems
\ref{daele82-1}-\ref{SchmTh2} and to revise the Van Daele--Schm\"{u}dgen and the Brasche--Neidhardt
theorems, see Section \ref{VD-Sch-B-N}.
{We note also that the von Neumann theorem \cite{Neumann}-\cite{Neumann2} and Schm\"{u}dgen's result \cite{schmud-1}
are related to results in \cite{NeidhZag1}, \cite{NeidhZag2} about another kind of pathological properties of
operators unbounded from above \textit{and} from below. These papers solved the problem of existence of densely defined
symmetric semi-bounded restrictions to \textit{stability domains} of initially unbounded from \textit{below} symmetric
operators. The same theorems together with the operator \textit{parallel addition} play essential role in \cite{AHS2} in
order to construct counterexamples to some statements in \cite{KrO} related to the $Q$-functions of Hermitian contractions.}

Here is a brief review of contents of the paper.
{In Section \ref{Prel} we recall some basic facts of the operator theory indispensable for formulations and proofs
of our main results. They are: the operator ranges, the concept of parallel addition, the Kre\u\i n shorted operators, the
self-adjoint extensions of non-negative operators, and few relevant fundamental statements like von Neumann's and
Douglas' theorems.}

{Section \ref{main-results} collects our main results. We start by Section \ref{triv-intersec}, where different
characterisations for trivial intersections of operator ranges with subspaces are presented. This preparation is aimed to
describe then essential steps of our approach.}

Our key statement (Theorem \ref{SchmTh333}) is that for a given
$A\in\bB_{0}^+(\cH)$ with $\ran A\ne \cH$, we can find a
{\textit{continuum} set of different subspaces} $\sM \subset \cH$
satisfying
\begin{equation}\label{zeroint111a}
\sM\cap \ran A^{1/2}=\sM^\perp\cap\ran A^{1/2}=\{0\} \ .
\end{equation}

{In Theorem \ref{monot} we show the existence of \textit{increasing} (\textit{decreasing}) chains of subspaces
possessing the trivial intersection property \eqref{zeroint111a}.} To this aim we use the \textit{lifting} of
operator $A$. It is defined as a representation of $A$ generated by orthogonal projection $P_\sM: \cH \rightarrow \sM $,
which has the form
\begin{equation}\label{lift-Intr}
A=T^{1/2}P_\sM T^{1/2} \ .
\end{equation}
Here $T\in\bB^+_0(\cH)$ is the sum $T=A+B$, where $B\in\bB_{0}^+(\cH)$ with $\ran B^{1/2}\cap\ran A^{1/2}=\{0\}$. Note
that by virtue of trivial intersection of $\ran B^{1/2}$ and $\ran A^{1/2}$, the subspaces $\sM$ and $\sM^\perp$ have trivial
intersections with $\ran T^{1/2}$ :
\begin{equation}\label{T-intersect-M}
\sM\cap\ran T^{1/2}=\sM^\perp\cap\ran T^{1/2}=\{0\}.
\end{equation}

{In Section \ref{LiftOper} we study the existence of the lifting in the form \eqref{lift-Intr} with \eqref{T-intersect-M},
when the subset $\sM$ possessing \eqref{zeroint111a} is given. Then conditions on the entries of $A\in\bB^+_0(\cH)$ in its
block-operator matrix representation with respect to decomposition $\cH=\sM\oplus\sM^\perp$ are found. We give examples
that \textit{not all} subspaces $\sM$ possessing \eqref{zeroint111a} can be constructed applying a \textit{general} form of
the {operator lifting} (\ref{lift-Intr}). This indicates that our method is not exhaustive.
It also means that the problem of construction of \textit{all} subspaces $\sM$ verifying \eqref{zeroint111a} for a given
operator $A$ is \textit{open}.}

Nevertheless, our method of the operator \textit{lifting} allows to obtain more detailed information about the
\textit{hierarchy} of possible subspaces $\sM$ and {to establish a number of new results about it.} In particular, we prove
that for a given $A\in\bB_{0}^+(\cH)$ with $\ran A\ne \cH$ there exists a one-parameter family of these subspaces, see
Theorem \ref{SchmTh333} and Proposition \ref{11}, as well as some increasing (decreasing) infinite chains of subspaces
$\sM$ with the property \eqref{zeroint111a}, see Theorem \ref{monot} and Corollary \ref{nov1}.

In Theorems \ref{111}, \ref{appl1} of the next Section \ref{Appl} we revise the Schm\"{u}gen result (Theorem \ref{SchmTh1}).
{Moreover, in Theorem \ref{now11} we construct decreasing/increasing families
(in the sense of associated closed quadratic forms) of \textit{pairs} of non-negative self-adjoint operators with trivial
interactions of their form-domains with domain of a given unbounded non-negative self-adjoint operator.
Then we investigate the limiting behaviour of their resolvents and of the corresponding one-parameter semigroups.}

{These results allow to scrutinise in Section \ref{VD-Sch-B-N} the \textit{triviality domain problem} for products/powers of
\textit{unbounded} operators, cf Theorem \ref{SchmTh2}. We propose a systematic method for construction of examples of
pairs operators $\langle B, \wt B \rangle $ consisting of closed densely defined symmetric operator $B$ and its
symmetric/self-adjoint extension $\wt B$, such that $\dom(\wt B^*B)=\{0\}$. This gives abstract examples of symmetric
operators $B$ with trivial squares and allows us to refine the Van Daele--Schm\"{u}dgen and the Brasche--Neidhardt theorems.
Under certain additional conditions we show in Theorems \ref{novsch} and \ref{brne} that the products in different
order, i.e., operators $B\wt B$ and $\wt B B$ are densely defined  and we describe their Friedrichs and Kre\u\i n
self-adjoint extensions}.

\section{Preliminaries}\label{Prel}
\subsection{Operator ranges}
{Following \cite{FW} we call linear manifold $\cR$ in a Hilbert space $\cH$ an \textit{operator range},
if it is the range of \textit{some} bounded linear operator on $\cH$. Note that even for bounded operators
the operator ranges possess certain special features that distinguish them from \textit{arbitrary} linear manifolds
since their properties may be more pathological.}

Clearly, if an operator range $\cR$ is unclosed and dense in $\cH$, then it is a domain of a
non-negative self-adjoint unbounded operator in $\cH$. Indeed, if $\cR=\ran\cA$, $\cA\in\bB(\cH)$, then $\cR=\ran |\cA^*|$,
where $|\cA^*|:=(\cA\cA^*)^{1/2}$ is non-negative self-adjoint bounded operator. Since $\cR$  is dense in $\cH$ we get
$\ker |\cA^*|=\{0\}$. The inequality $\cR\ne \cH$ yields that the operator $T=|\cA^*|^{-1}$ is unbounded non-negative
self-adjoint operator and $\dom T=\cR.$ Conversely, if $T$ is a non-negative unbounded closed and densely defined linear
operator, then $\dom T=\dom|T|$, for $|T|=(T^* T)^{1/2}$. Consequently, one obtains
\[
\dom T=\ran (|T|+I)^{-1} \ .
\]
This means that $\dom T$ is an operator range. Various characterizations of operator ranges can be found in \cite{FW}.

\subsection{The Douglas theorem}

\begin{theorem}{\rm \cite{Doug}} \label{Douglas}
For every $A, B\in \bB(\cH)$ the following statements are equivalent:
\begin{enumerate}
\def\labelenumi{\rm (\roman{enumi})}
\item $\ran A\subset \ran B$;
\item $A=BC$ for some $C\in \bB(\cH)$;
\item $AA^*\le \lambda BB^*$ for some $\lambda \ge 0$.
\end{enumerate}
{Moreover, there is a unique operator $C$ satisfying $\ran C\subset\cran B^*$, in which case $\ker C=\ker A$.}
\end{theorem}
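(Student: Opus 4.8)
The plan is to establish the equivalences through the two substantial implications $(iii)\Rightarrow(ii)$ and $(i)\Rightarrow(ii)$, since $(ii)\Rightarrow(iii)$ and $(ii)\Rightarrow(i)$ are immediate. If $A=BC$ with $C\in\bB(\cH)$, then $\ran A=\ran(BC)\subset\ran B$, giving $(i)$, and for every $x\in\cH$ one has $\|A^*x\|^2=\|C^*B^*x\|^2\le\|C\|^2\|B^*x\|^2$, i.e. $(AA^*x,x)\le\|C\|^2(BB^*x,x)$, giving $(iii)$ with $\lambda=\|C\|^2$.

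For $(iii)\Rightarrow(ii)$ — which I regard as the conceptual core, being the passage from an operator inequality to a factorization — I would read the inequality $AA^*\le\lambda BB^*$ at the level of vectors: it is equivalent to $\|A^*x\|^2\le\lambda\|B^*x\|^2$ for all $x\in\cH$. Hence the assignment $B^*x\mapsto A^*x$ is a well-defined linear map on the manifold $\ran B^*$ (well-definedness is exactly the implication $B^*x=B^*y\Rightarrow A^*x=A^*y$ furnished by the inequality), and it is bounded with norm $\le\sqrt\lambda$; it therefore extends by continuity to a bounded operator $D\in\bB(\cran B^*,\cH)$, which I extend to all of $\cH$ by declaring $D=0$ on $(\cran B^*)^\perp=\ker B$. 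By construction $DB^*=A^*$, so $A=BD^*$; putting $C:=D^*$ yields $(ii)$, and moreover $\ran C=\ran D^*\subset\cran D^*=(\ker D)^\perp\subset(\ker B)^\perp=\cran B^*$, so this is already the distinguished factorization. In particular $(iii)\Rightarrow(i)$ is subsumed.

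To close the circle I would also record the direct argument for $(i)\Rightarrow(ii)$, the one that does not presuppose an inequality. Given $\ran A\subset\ran B$, define $C$ pointwise: for $x\in\cH$ the vector $Ax$ lies in $\ran B$, hence there is a unique $y\in\cran B^*=(\ker B)^\perp$ with $By=Ax$, and I set $Cx:=y$. Linearity of $C$ is clear and $BC=A$. Boundedness then follows from the closed graph theorem: if $x_n\to x$ and $Cx_n\to z$, then $z\in\cran B^*$ (a closed subspace), while $Ax_n=BCx_n\to Bz$ and $Ax_n\to Ax$ force $Bz=Ax=BCx$; since $B$ is injective on $\cran B^*$, we get $z=Cx$, so $C$ is closed and everywhere defined, hence $C\in\bB(\cH)$ with $\ran C\subset\cran B^*$.

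Finally, for the ``moreover'' part: if $BC_1=BC_2=A$ with $\ran C_i\subset\cran B^*$, then $\ran(C_1-C_2)\subset\ker B\cap\cran B^*=\{0\}$, so $C_1=C_2$ — uniqueness. For the distinguished $C$ one has $\ker C=\ker A$: if $Cx=0$ then $Ax=BCx=0$, and conversely if $Ax=0$ then $BCx=0$, so $Cx\in\ker B\cap\cran B^*=\{0\}$. The only point needing care throughout is the repeated, but routine, use of the orthogonal decomposition $\cH=\ker B\oplus\cran B^*$ together with the injectivity of $B$ on the second summand, which is precisely what legitimizes both the extension-by-zero in $(iii)\Rightarrow(ii)$ and the well-definedness of the pointwise $C$ in $(i)\Rightarrow(ii)$; I do not anticipate any genuine obstacle beyond this.
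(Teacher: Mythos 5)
Your proposal is correct and complete: the paper itself gives no proof of this statement, quoting it as the Douglas theorem from \cite{Doug}, and your argument is essentially Douglas' original one — the contraction-type map $B^*x\mapsto A^*x$ extended by continuity and by zero on $\ker B$ for (iii)$\Rightarrow$(ii), the pointwise definition of $C$ with range in $\cran B^*$ plus the closed graph theorem for (i)$\Rightarrow$(ii), and the routine verifications of (ii)$\Rightarrow$(i), (ii)$\Rightarrow$(iii), uniqueness, and $\ker C=\ker A$. All steps check out, including the repeated use of $\cH=\ker B\oplus\cran B^*$ and injectivity of $B$ on $\cran B^*$, so nothing needs to be added.
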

The next relations follow from Theorem \ref{Douglas} (see \cite{FW}, Sect.4):
\begin{equation}
\label{doug11} \left(\sum\limits_{j=1}^n F_j\right)^{1/2}=\ran
F^{1/2}_1+\ldots+\ran F^{1/2}_n,\; \{F_j\}_{j=1}^n\subset\bB^+(\cH),
\end{equation}
\begin{equation}
\label{usef} \ran \left(F^{1/2}MF^{1/2}\right)^{1/2}=F^{1/2}\ran
M^{1/2}, \; F,M\in\bB^+(\cH).
\end{equation}
This yields, in particular, that if $T_1, \ldots, T_j$ are closed
and densely defined linear operators in $\cH$, then the linear
manifold
\[
\dom T_1+\ldots+ \dom T_n
\]
is domain of a closed linear operator.

\subsection{The von Neumann theorem} In paper \cite{Neumann} (see also \cite{Neumann1}, \cite{Neumann2})
John von Neumann established the following fundamental result:
\begin{theorem} \label{NeumTh1}
For any unbounded self-adjoint operator $H$ in a Hilbert space there
exists a unitary operator $U$ with the property
\[
\dom H \cap \dom (U^*HU)=\{0\}.
\]
\end{theorem}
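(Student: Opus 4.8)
The plan is to construct the unitary $U$ explicitly by exploiting the spectral decomposition of the unbounded self-adjoint operator $H$. First I would normalise the situation: since $H$ is unbounded, by the spectral theorem there is a sequence of mutually orthogonal spectral subspaces $\cH_n := E_H(\Delta_n)\cH$, where $\Delta_n$ are disjoint Borel sets such that $H\uphar\cH_n$ is bounded, say $\|H\uphar\cH_n\|\le c_n$, and the $c_n$ can be arranged to grow to infinity (choosing the $\Delta_n$ so that, e.g., $c_n\ge n$). The point of this reduction is that a vector $f=\sum_n f_n$ with $f_n\in\cH_n$ lies in $\dom H$ precisely when $\sum_n \|H f_n\|^2<\infty$, and this is a genuine constraint on the "tails" of $f$ only because the norms of $H$ on the blocks blow up.

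Next I would build $U$ as a block operator that shuffles vectors between consecutive spectral subspaces so that a vector and its image cannot both have summable $H$-tails unless they vanish. Concretely, after possibly splitting and recombining the $\cH_n$, one can assume all the blocks are infinite-dimensional (or at least pair them up to make this work), fix unitaries $V_n:\cH_n\to\cH_{n+1}$, and let $U$ act in a way that maps the bulk of $\cH_n$ into $\cH_{n+1}$ with a relative scaling tuned to $c_n$ versus $c_{n+1}$. The key arithmetic design goal: arrange that if $f\in\dom H$ and $U^*f\in\dom H$, i.e. $Uf\in\dom H$ as well, then comparing $\sum\|Hf_n\|^2<\infty$ with $\sum\|H(Uf)_n\|^2<\infty$ forces the components of $f$ to decay faster than any admissible tail, hence $f=0$. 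A clean way to realise this is the classical trick of interleaving two orthonormal-basis-like systems adapted to $H$: choose, in a suitable reduced model, orthonormal sequences $(e_n)$ and $(h_n)$ with $He_n\approx \lambda_n e_n$ and define $U$ so that it rotates $e_n$ partly onto $h_n$ with angles chosen so that the "$\dom H$" condition on $Uf$ involves a different, incompatible weighting of the coefficients than the condition on $f$.

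The last step is to verify $\dom H\cap\dom(U^*HU)=\{0\}$. Since $\dom(U^*HU)=U^*\dom H$, this is exactly the statement that $\dom H\cap U^*\dom H=\{0\}$, equivalently $\dom H\cap U\,\dom H$ — wait, more precisely $f\in\dom H$ and $Uf\in\dom H$ together imply $f=0$. With $U$ chosen as above one writes out both summability conditions explicitly in coordinates, obtains two series $\sum a_n|f_n|^2<\infty$ and $\sum b_n|f_n|^2<\infty$ with weight sequences $a_n,b_n$ engineered so that $a_n/b_n\to\infty$ and $b_n/a_n\to\infty$ cannot both fail — actually the correct design gives $\max(a_n,b_n)\to\infty$ in a way that no nonzero $\ell^2$-type tail survives both constraints; the conclusion $f_n=0$ for all $n$ follows, hence $f=0$.

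The main obstacle I expect is the combinatorial bookkeeping in choosing the spectral partition and the rotation angles so that the two domain conditions are \emph{simultaneously} incompatible for every nonzero vector, not merely for vectors supported on single blocks: one must control the cross-block mixing carefully, which is precisely why unboundedness (the genuine growth of $c_n$) is essential and where a purely soft argument fails. A secondary technical point is handling spectral subspaces that may be finite-dimensional or absent for some values of $n$; this is dealt with by regrouping the $\Delta_n$ so that each retained block is infinite-dimensional, using that $H$ unbounded guarantees infinitely many disjoint intervals carry nonzero spectral mass reaching arbitrarily high, though in general one may need to allow blocks of varying finite dimension and argue on the whole vector rather than block-by-block.
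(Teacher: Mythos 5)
The paper itself does not prove Theorem \ref{NeumTh1}: it quotes it as von Neumann's classical result \cite{Neumann} and refers for proofs of the equivalent operator-range formulation (Theorem \ref{NeumTh2}) to \cite{Dix} and \cite{FW}. Measured against those classical arguments, your proposal has a genuine gap: the unitary $U$ is never actually constructed, and the two concrete mechanisms you sketch cannot work as stated. First, a $U$ that sends (the bulk of) $\cH_n=E_H(\Delta_n)\cH$ into $\cH_{n+1}$ has finite propagation with respect to the spectral decomposition; but every vector of bounded spectral support lies in $\dom H$, so any nonzero $f\in\cH_n$ satisfies $f\in\dom H$ and $Uf\in\cH_{n+1}\subset\dom H$, i.e.\ $f\in\dom H\cap\dom(U^*HU)$, and the construction fails at once. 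Second, the verification you describe --- extracting two weighted conditions $\sum_n a_n\|f_n\|^2<\infty$ and $\sum_n b_n\|f_n\|^2<\infty$ on the \emph{same} block components and engineering the weights to be ``incompatible'' --- can never force $f=0$, no matter how $a_n,b_n$ are chosen: every finitely supported $f$ (for instance $0\ne f\in E_H([-N,N])\cH$) satisfies both, as does any $f$ with $\|f_n\|^2=n^{-2}(a_n+b_n)^{-1}$; your criterion $\max(a_n,b_n)\to\infty$ excludes nothing. The only correct step in the write-up is the trivial identity $\dom(U^*HU)=U^*\dom H$.

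What the theorem really demands is that $U$ map \emph{every} nonzero vector of the dense manifold $\dom H$ --- in particular every vector of bounded spectral support --- outside $\dom H$. Hence $U$ must mix infinitely many spectral blocks simultaneously, and the condition $Uf\in\dom H$ then constrains infinite linear combinations of the components of $f$, not the moduli $\|f_n\|$ block by block; the heart of the proof is a rigidity (non-cancellation) argument showing that the slowly decaying tails which $U$ creates from different blocks cannot cancel for a nonzero $f\in\dom H$. This is exactly the point you set aside as ``combinatorial bookkeeping'' and ``careful control of cross-block mixing''; it is not bookkeeping but the entire content of the theorem, and it is where the proofs of \cite{Neumann}, \cite{Dix}, \cite{FW} do their real work, starting from the canonical form of the nonclosed dense operator range $\dom H=\ran(|H|+I)^{-1}$ and building a unitary with quantitatively controlled infinite mixing. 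Until you exhibit such a $U$ and prove the non-cancellation statement, the argument is a plan, not a proof.
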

Special examples of self-adjoint operators $A$ and $B$ with $\dom A\cap\dom B=\{0\}$ one can find in \cite{CNZ},
\cite{Daele}, \cite{Kosaki}.

In terms of operator ranges the statement of Theorem \ref{NeumTh1} takes the following form:
\begin{theorem} \label{NeumTh2}
If $\cR$ is a nonclosed {and dense operator range} in a Hilbert space $\cH$, then there is a unitary operator $U$
on $\cH$ such that
\begin{equation}
\label{fw11} \cR\cap U\cR=\{0\}.
\end{equation}
\end{theorem}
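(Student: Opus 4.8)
The plan is to show that Theorem \ref{NeumTh1} and Theorem \ref{NeumTh2} are merely two formulations of the same fact, and to deduce the latter from the former via the dictionary between nonclosed dense operator ranges and domains of unbounded non-negative self-adjoint operators established in the subsection on operator ranges. First I would observe that if $\cR$ is a nonclosed dense operator range, then $\cR = \dom T$ for an unbounded non-negative self-adjoint operator $T$; indeed, writing $\cR = \ran|\cA^*|$ for some $\cA \in \bB(\cH)$ as in that subsection, one has $\ker|\cA^*| = \{0\}$ (density of $\cR$) and $\ran|\cA^*| \ne \cH$ (nonclosedness together with the closed graph theorem), so $T := |\cA^*|^{-1}$ is an unbounded non-negative self-adjoint operator with $\dom T = \cR$.

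Next I would apply Theorem \ref{NeumTh1} to this operator $T$: there is a unitary $U$ on $\cH$ with $\dom T \cap \dom(U^*TU) = \{0\}$. The remaining point is to identify $\dom(U^*TU)$ with $U\cR$. Since $U$ is unitary, $U^*TU$ is again self-adjoint, and $g \in \dom(U^*TU)$ exactly when $Ug \in \dom T$, i.e.\ $\dom(U^*TU) = U^*(\dom T) = U^*\cR$. Hence $\cR \cap U^*\cR = \{0\}$. Replacing $U$ by $U^*$ (still unitary) gives $\cR \cap U\cR = \{0\}$, which is precisely \eqref{fw11}.

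I do not expect a genuine obstacle here: the content is entirely carried by Theorem \ref{NeumTh1}, and the argument is a routine translation. The only points needing a word of care are the two facts used to pass from a nonclosed dense operator range to an unbounded self-adjoint $T$ — namely that density forces trivial kernel and that nonclosedness forces the range to be proper (equivalently, that a bounded bijection has bounded inverse, so a dense nonclosed range cannot be all of $\cH$) — and the elementary unitary-conjugation identity $\dom(U^*TU) = U^*\dom T$. Both are standard, so the proof is short.
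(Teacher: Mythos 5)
Your proposal is correct, and it follows exactly the route the paper intends: the paper presents Theorem \ref{NeumTh2} as the operator-range reformulation of the von Neumann Theorem \ref{NeumTh1} (deferring a written proof to \cite{Dix} and \cite{FW}), and your argument simply makes that translation explicit via the dictionary $\cR=\dom T$ for an unbounded non-negative self-adjoint $T=|\cA^*|^{-1}$ set up in the paper's preliminaries, together with the identity $\dom(U^*TU)=U^*\dom T$ and the harmless replacement of $U$ by $U^*$. The only cosmetic point is the justification that $\cR\ne\cH$: this is immediate from nonclosedness (the whole space is closed), while the closed graph/bounded inverse argument is what gives the unboundedness of $T$; both facts are used correctly in substance.
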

\begin{corollary}
\label{coro1} If $\cR$ is a nonclosed operator range in a Hilbert
space, then there exists a continuous one-parameter unitary group $\{U_t\}_{t\in \mathbb{R}}$ such that
$U_s \cR \cap U_t \cR =\{0\}$ for $s \ne t$.
\end{corollary}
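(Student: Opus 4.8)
The plan is to deduce Corollary~\ref{coro1} from Theorem~\ref{NeumTh2} by exhibiting a single unitary group whose ``generic'' member already does the job of the abstract unitary $U$ produced by the theorem, and then exploiting the group law to replace the two-parameter condition $U_s\cR\cap U_t\cR=\{0\}$ by the one-parameter condition $\cR\cap U_{t-s}\cR=\{0\}$. Indeed, since each $U_s$ is a bijection of $\cH$, one has $U_s\cR\cap U_t\cR=U_s\bigl(\cR\cap U_{s}^{-1}U_t\,\cR\bigr)=U_s\bigl(\cR\cap U_{t-s}\,\cR\bigr)$, so it suffices to find a strongly continuous one-parameter unitary group $\{U_t\}$ with $\cR\cap U_t\cR=\{0\}$ for \emph{every} $t\neq0$.

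First I would unpack what Theorem~\ref{NeumTh2} and its proof actually provide. The obstruction ``$\cR\cap U\cR\neq\{0\}$'' depends only on the unitary $U$, and the set of ``bad'' unitaries is in a suitable sense small: the proof of the von Neumann theorem realises the good $U$ inside a continuum of candidates. Concretely, I would write $\cR=\ran A^{1/2}$ for some $A\in\bB^+_0(\cH)$ with $\ran A\neq\cH$ (possible since $\cR$ is a nonclosed dense operator range, as recalled in the Preliminaries), fix a reference unitary $U$ with $\cR\cap U\cR=\{0\}$, and diagonalise: decompose $\cH$ into $U$-cyclic subspaces so that $U$ becomes multiplication by $e^{i\theta}$ on a direct sum of $L^2$-spaces with respect to spectral measures on the circle. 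In that model the natural one-parameter group is $U_t=$ multiplication by $e^{it\theta}$ (equivalently $U_t=U^{t}$ defined via the spectral theorem, a strongly continuous unitary group with $U_1=U$). The claim ``$\cR\cap U_t\cR=\{0\}$ for all $t\neq0$'' then becomes a statement about a one-parameter deformation of the rotation, and the whole point is that the argument proving it for $t=1$ is insensitive to the precise value of $t$: rotating the ``interlocking Fourier supports'' construction by any nonzero angle still produces trivial intersection.

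The cleaner route, and the one I would follow to avoid re-deriving von Neumann's theorem, is a Baire-category / measure-theoretic packaging. For each fixed $t\neq0$ Theorem~\ref{NeumTh2} (applied after a harmless reparametrisation) guarantees the existence of unitaries with the trivial-intersection property; one shows that within a fixed strongly continuous unitary group $\{U_t\}$ obtained from a \emph{generic} generator, the set $\{t\in\dR\setminus\{0\}: \cR\cap U_t\cR\neq\{0\}\}$ is ``small''. The standard trick: $\cR\cap U_t\cR\neq\{0\}$ is equivalent to the existence of nonzero $x,y$ with $A^{1/2}x=U_tA^{1/2}y$, i.e.\ to the operator $A^{1/2}-U_tA^{1/2}U_t^{-1}\cdot(\text{something})$ having nontrivial ``overlap''; more usefully, using \eqref{doug11} and \eqref{usef}, $\cR\cap U_t\cR=\ran\bigl(A^{1/2}(A+U_tAU_t^{-1})^{-1}A^{1/2}\bigr)^{1/2}$-type expressions vanish iff a certain shorted operator is zero, and one checks this shorted operator is zero for all $t\neq0$ simultaneously once it is zero at $t$-values forming, say, a dense set, by a continuity/analyticity argument in $t$. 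Alternatively, and most economically, I would invoke Corollary~\ref{coro1} as essentially a restatement of Theorem~\ref{NeumTh2}: apply the theorem to get $U$ with $\cR\cap U\cR=\{0\}$, set $U_t:=U^t$ via functional calculus, and observe that the self-same Fourier-support construction in von Neumann's proof, which only uses that the rotation angle is irrational-free in an appropriate sense, in fact yields $\cR\cap U^t\cR=\{0\}$ for all $t\neq 0$; then combine with the group-law reduction above.

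The main obstacle is the step asserting that the trivial-intersection property, known for the single unitary $U$, propagates to \emph{all} $U_t$, $t\neq0$, within one fixed group. The danger is that $\cR\cap U_t\cR$ could be nonzero for a dense set of $t$ even if it is zero for $t=1$; ruling this out requires either a careful re-examination of the mechanism in von Neumann's original construction (the overlapping-Fourier-modes argument is genuinely uniform in the rotation angle, which is what one must articulate) or a genuine category argument showing the bad set of $t$ is meagre/null and then upgrading ``meagre complement'' to ``empty complement'' by exploiting the self-similarity $U_{t}=U_{t/n}^{\,n}$ and the fact that if $\cR\cap U_s\cR=\{0\}$ then automatically $\cR\cap U_{ks}\cR$ need not vanish — so one must instead argue directly that the generic generator works at every scale. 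I expect the bulk of the real work to lie here; the reduction from two parameters to one via the group law, and the translation into operator-range language via \eqref{doug11}--\eqref{usef} and the shorted-operator formalism from the Preliminaries, are routine.
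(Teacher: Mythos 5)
Your group-law reduction $U_s\cR\cap U_t\cR=U_s\bigl(\cR\cap U_{t-s}\cR\bigr)$ is correct, but everything after it is a plan rather than a proof, and the one step that constitutes the actual content of Corollary \ref{coro1} --- producing a single strongly continuous unitary group with $\cR\cap U_t\cR=\{0\}$ for \emph{every} $t\neq 0$ --- is left open, as you yourself acknowledge. None of the routes you sketch closes it. The ``most economical'' route is circular or false: taking a von Neumann unitary $U$ with $\cR\cap U\cR=\{0\}$ and setting $U_t:=e^{itH}$ with $e^{iH}=U$ does not propagate the property to all $t\neq 0$. Concretely, on $\cH=L^2(0,1)$ let $\cR$ be the range of the operator sending $e^{2\pi i n x}$ to $e^{-|n|}e^{2\pi i n x}$ (so $\cR$ consists of the $1$-periodic functions holomorphic in a fixed strip) and let $U$ be multiplication by $e^{2\pi i\alpha x}$ with $\alpha\notin\mathbb{Z}$; an identity-theorem argument gives $\cR\cap U\cR=\{0\}$, yet for the natural group $U_t=$ multiplication by $e^{2\pi i\alpha t x}$ (generator $2\pi\alpha x$, $U_1=U$) one has $U_{k/\alpha}\cR=\cR$ for every nonzero integer $k$, so $\cR\cap U_{k/\alpha}\cR=\cR\neq\{0\}$. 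Thus ``good at $t=1$'' does not imply ``good at all $t\neq 0$'' within a group through $U$; the group must be constructed specifically so that no nonzero time carries $\cR$ onto a manifold meeting $\cR$, and that construction is the whole proof. Your fallback --- show that the bad set $\{t\neq 0:\ \cR\cap U_t\cR\neq\{0\}\}$ is meagre or null for a generic generator and then upgrade to emptiness by ``continuity/analyticity in $t$'' --- is also unsupported: by Proposition \ref{root} one has $\cR\cap U_t\cR=\ran\bigl(A:(U_tAU_t^{*})\bigr)^{1/2}$ (rather than the expression you wrote), and vanishing of this parallel sum is neither an open nor a closed condition in $t$, so knowing it on a dense set of parameters yields nothing; besides, smallness of the bad set is in any case strictly weaker than the assertion to be proved.

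Note also that the paper itself offers no argument you could be matching: both Theorem \ref{NeumTh2} and Corollary \ref{coro1} are quoted with reference to \cite{Dix} and \cite{FW}, where the one-parameter group is constructed explicitly from a concrete model of the nonclosed operator range, not by upgrading a single von Neumann unitary --- a route that, in view of the example above, is essentially forced. So your proposal, to become a proof, would have to supply such a construction, and it does not.
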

The proof of Theorem \ref{NeumTh2} and Corollary \ref{coro1} can be found in e.g. \cite{Dix} and \cite{FW}.

\subsection{The parallel sum of operators.}
Let $F$ and $G$ be two bounded non-negative operators on $\cH$. The
\textit{parallel sum} $F:G$ of $F$ and $G$ is defined by the quadratic form:
\[
\left((F:G)h,h\right):=\inf_{f,g \in \cH}\left\{\,\left(Ff,f\right)+\left(Gg,g\right):\,
   h=f+g \,\right\} \ ,
\]
see \cite{AD}, \cite{FW}, \cite{KA}. One can establish for $F:G$  the following equivalent
definition
\[
F:G=s-\lim\limits_{\varepsilon\downarrow 0}\,
F\left(F+G+\varepsilon I\right)^{-1}G \ ,
\]
see \cite{AT}, \cite{PSh}. Then for \textit{positive definite} bounded self-adjoint operators $F$ and $G$ we obtain
\[
F:G=(F^{-1}+G^{-1})^{-1} \ .
\]


Since $F\le F+G$ and $G\le F+G$, one gets
\begin{equation}
\label{fg2}
 F=(F+G)^{1/2}M(F+G)^{1/2},\quad  G=(F+G)^{1/2}(I-M)(F+G)^{1/2}
\end{equation}
for some non-negative contraction $M$ on $\cH$ with $\ran M\subset\cran(F+G)$. This yields yet another description
of the parallel sum $F:G$.
\begin{lemma} {\rm \cite{Ar2}}
\label{yu1} Suppose $F, G\in \bB^+(\cH)$ and let $M$ be as in
\eqref{fg2}. Then
\[
 F:G=(F+G)^{1/2}(M-M^2)(F+G)^{1/2}.
\]
\end{lemma}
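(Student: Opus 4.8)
The plan is to derive the identity from the resolvent description of the parallel sum recalled above, $F:G=s-\lim\limits_{\varepsilon\downarrow0}F(F+G+\varepsilon I)^{-1}G$, combined with the representations \eqref{fg2}. Set $T:=F+G\in\bB^+(\cH)$, so that $F=T^{1/2}MT^{1/2}$, $G=T^{1/2}(I-M)T^{1/2}$, where $0\le M\le I$ and $\ran M\subset\cran T$. Since $(F+G+\varepsilon I)^{-1}=(T+\varepsilon I)^{-1}$ is a function of $T$, it commutes with $T^{1/2}$, and the functional calculus yields $T^{1/2}(T+\varepsilon I)^{-1}T^{1/2}=T(T+\varepsilon I)^{-1}$. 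Substituting the two representations I obtain, for each $\varepsilon>0$,
\[
F(F+G+\varepsilon I)^{-1}G=T^{1/2}\,M\,\bigl[\,T(T+\varepsilon I)^{-1}\,\bigr]\,(I-M)\,T^{1/2}.
\]

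Next I would pass to the limit $\varepsilon\downarrow0$. From $T(T+\varepsilon I)^{-1}=I-\varepsilon(T+\varepsilon I)^{-1}$ and the spectral theorem (dominated convergence in the spectral integral of $T$), one gets $\varepsilon(T+\varepsilon I)^{-1}\to P_{\ker T}$ strongly, hence $T(T+\varepsilon I)^{-1}\to I-P_{\ker T}=P_{\cran T}$ in the strong operator topology. Since $M$, $I-M$ and $T^{1/2}$ are fixed bounded operators and left and right multiplication by a fixed bounded operator are continuous for the strong operator topology, the strong limit may be taken inside the product, so
\[
F:G=T^{1/2}\,M\,P_{\cran T}\,(I-M)\,T^{1/2}.
\]

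It remains to simplify the inner factor. The inclusion $\ran M\subset\cran T$ forces $P_{\cran T}M=M$; passing to adjoints (recall $M=M^*$ and $P_{\cran T}=P_{\cran T}^*$) gives also $MP_{\cran T}=M$. Therefore $M\,P_{\cran T}\,(I-M)=M(I-M)=M-M^2$, and
\[
F:G=T^{1/2}(M-M^2)T^{1/2}=(F+G)^{1/2}(M-M^2)(F+G)^{1/2},
\]
as claimed.

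I do not anticipate any serious obstacle: once the resolvent formula for $F:G$ is available, the proof is a short computation. The only delicate points are the functional-calculus identity $T^{1/2}(T+\varepsilon I)^{-1}T^{1/2}=T(T+\varepsilon I)^{-1}$, the strong convergence $\varepsilon(T+\varepsilon I)^{-1}\to P_{\ker T}$, and the (routine) permanence of strong operator limits under multiplication by the fixed bounded operators $T^{1/2}M$ and $(I-M)T^{1/2}$. A possible alternative — working from the infimum definition of $F:G$ by diagonalising in the spectral resolution of $T$ and carrying out the minimisation over the decompositions $h=f+g$ pointwise — also leads to the same quadratic form $T^{1/2}(M-M^2)T^{1/2}$, but it is longer; I would prefer the resolvent route since it uses only facts already recalled in the paper.
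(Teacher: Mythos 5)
Your argument is correct. Note, however, that the paper itself gives no proof of Lemma \ref{yu1}: it is quoted from \cite{Ar2}, so there is no internal argument to compare yours against. Your resolvent route is self-contained and uses only facts recalled in the paper, and the three delicate points you flag are indeed the only ones and are all handled properly: $T^{1/2}(T+\varepsilon I)^{-1}T^{1/2}=T(T+\varepsilon I)^{-1}$ is immediate from the functional calculus for $T=F+G$; the strong convergence $\varepsilon(T+\varepsilon I)^{-1}\to P_{\ker T}$, hence $T(T+\varepsilon I)^{-1}\to P_{\cran T}$, follows from the spectral theorem with dominated convergence; and multiplying a strongly convergent family on the left and right by the \emph{fixed} bounded operators $T^{1/2}M$ and $(I-M)T^{1/2}$ preserves the strong limit (no uniform-boundedness issue arises since only one factor varies). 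The final simplification $MP_{\cran T}(I-M)=M-M^2$ correctly uses the hypothesis $\ran M\subset\cran(F+G)$ from \eqref{fg2}, which gives $P_{\cran T}M=M$ and, by self-adjointness, $MP_{\cran T}=M$; this is exactly where that range condition is needed, so the proof is complete as written.
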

Using \eqref{doug11} and \eqref{fg2} one obtains the equalities
$$\ran F^{1/2}=(F+G)^{1/2}\ran M^{1/2},\; \ran
G^{1/2}=(F+G)^{1/2}\ran (I-M)^{1/2}.$$
Since
$$
\ran M^{1/2}\cap\ran (I-M)^{1/2}=\ran (M-M^2)^{1/2},
$$
the next proposition is an immediate consequence of Lemma \ref{yu1}, cf. \cite{FW}, \cite{PSh}.
\begin{proposition}
\label{root} 1) $\ran (F:G)^{1/2}=\ran F^{1/2}\cap\ran G^{1/2}$.

2) The following statements are equivalent:
\begin{enumerate}
\def\labelenumi{\rm (\roman{enumi})}
\item
 $F:G=0$;
 \item
the operator $M$ in \eqref{fg2} is an orthogonal projection in
$\cran(F+G)$;
\item $\ran F^{1/2}\cap\ran G^{1/2}=\{0\}$.
\end{enumerate}
\end{proposition}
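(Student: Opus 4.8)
The plan is to deduce both parts directly from Lemma~\ref{yu1} and the range identities \eqref{doug11}, \eqref{usef} recorded just above it, doing all the work inside the subspace $\cN:=\cran(F+G)$. This localisation is justified because $F\le F+G$ and $G\le F+G$ force $\ker(F+G)\subset\ker F\cap\ker G$, so that $\ran F^{1/2}$, $\ran G^{1/2}$ and $\ran(F:G)^{1/2}$ all lie in $\cN$, while $(F+G)^{1/2}$ is injective on $\cN$. Moreover the contraction $M$ of \eqref{fg2} has $\ran M\subset\cN$ and, being self-adjoint, vanishes on $\cN^\perp$; hence $M$, $M^{1/2}$, $(I-M)^{1/2}$ and $M-M^2$ are all block-diagonal with respect to $\cH=\cN\oplus\cN^\perp$, and $M-M^2\ge0$ since $0\le M\le I$.

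For part~1), I would apply \eqref{usef}, with $F$ replaced by $F+G$ and $M$ replaced by $M-M^2$, to the formula of Lemma~\ref{yu1}, obtaining
\[
\ran(F:G)^{1/2}=\ran\big((F+G)^{1/2}(M-M^2)(F+G)^{1/2}\big)^{1/2}=(F+G)^{1/2}\,\ran(M-M^2)^{1/2}.
\]
On the other hand \eqref{fg2} together with \eqref{usef} gives $\ran F^{1/2}=(F+G)^{1/2}\ran M^{1/2}$ and $\ran G^{1/2}=(F+G)^{1/2}\ran(I-M)^{1/2}$, and the identity $\ran M^{1/2}\cap\ran(I-M)^{1/2}=\ran(M-M^2)^{1/2}$ is the one quoted in the text. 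So the claim reduces to pulling $(F+G)^{1/2}$ through an intersection:
\[
(F+G)^{1/2}\big(\ran M^{1/2}\cap\ran(I-M)^{1/2}\big)=\ran F^{1/2}\cap\ran G^{1/2}.
\]
The inclusion ``$\subset$'' is immediate; for ``$\supset$'' I would take $z$ in the right-hand side, note $z\in\cN$, write $z=(F+G)^{1/2}u=(F+G)^{1/2}v$ with $u\in\ran M^{1/2}\subset\cN$ and $v\in\ran(I-M)^{1/2}$, replace $v$ by its $\cN$-component $v_\cN\in\ran(I-M)^{1/2}\cap\cN$ (the $\cN^\perp$-part being annihilated by $(F+G)^{1/2}$), and then invoke the injectivity of $(F+G)^{1/2}$ on $\cN$ to get $u=v_\cN\in\ran M^{1/2}\cap\ran(I-M)^{1/2}$.

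For part~2), the equivalence (i)$\Leftrightarrow$(iii) is then automatic: a bounded non-negative operator vanishes iff its square root has trivial range, so $F:G=0\iff\ran(F:G)^{1/2}=\{0\}\iff\ran F^{1/2}\cap\ran G^{1/2}=\{0\}$. For (i)$\Leftrightarrow$(ii) I would factor Lemma~\ref{yu1} as $F:G=C^*C$ with $C:=(M-M^2)^{1/2}(F+G)^{1/2}$; thus $F:G=0$ iff $C=0$, i.e. iff $(M-M^2)^{1/2}$ vanishes on $\ran(F+G)^{1/2}$, hence on its closure $\cN$. Combined with the vanishing of $M-M^2$ on $\cN^\perp$, this gives $M-M^2=0$ on all of $\cH$, i.e. $M=M^2$; since $M$ is self-adjoint and already zero on $\cN^\perp$, this says exactly that $M$ is an orthogonal projection in $\cN$. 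The converse, $M^2=M\Rightarrow M-M^2=0\Rightarrow F:G=0$, is read off from Lemma~\ref{yu1}.

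The only genuinely delicate step is the distribution of $(F+G)^{1/2}$ over the intersection of ranges in part~1): as an operator on $\cH$ it is not injective, so the reduction to $\cN=\cran(F+G)$, where it becomes injective and where all the ranges in sight already live, is what makes the argument clean. Everything else is bookkeeping with \eqref{doug11}, \eqref{usef} and Lemma~\ref{yu1}.
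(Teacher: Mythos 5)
Your proof is correct and follows essentially the same route as the paper, which simply declares the proposition ``an immediate consequence'' of Lemma~\ref{yu1} together with the identities $\ran F^{1/2}=(F+G)^{1/2}\ran M^{1/2}$, $\ran G^{1/2}=(F+G)^{1/2}\ran(I-M)^{1/2}$ and $\ran M^{1/2}\cap\ran(I-M)^{1/2}=\ran(M-M^2)^{1/2}$. The only difference is that you explicitly justify the step the paper leaves implicit---pulling $(F+G)^{1/2}$ through the intersection by restricting to $\cran(F+G)$, where it is injective and $M$ is block-diagonal---which is a correct and welcome filling-in of detail rather than a different method.
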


\subsection{The Kre\u\i n shorted operator}
For a given non-negative bounded operator $B$ on the Hilbert space $\cH$ and for any subspace $\cK\subset \cH\;$
M.G.~Kre\u{\i}n  defined in \cite{Kr1} the operator
\[
B_{\cK}:=\max\left\{\,Z\in \bB(\cH):\,
    0\le Z\le B, \, {\ran}Z\subseteq{\cK}\,\right\}.
\]
Then equivalent definition of $B_{\cK}$ has the following quadratic-form expression:
\begin{equation}
\label{Sh1}
 \left(B_{\cK}f, f\right):=\inf\limits_{\f\in \cK^\perp}\left\{\left(B(f + \varphi),f +
 \varphi\right)\right\},
\quad  f\in\cH.
\end{equation}
Here $\cK^\perp:=\cH\ominus{\cK}$. The operator $B_{\cK}$ is called the \textit{shorted operator} of $B$,
see \cite{And, AT}. Let the subspace $\Omega_{\cK}$ be defined by
\[
 \Omega_{\cK}:=\{\,f\in \cran B:\,B^{1/2}f\in {\cK}\,\}=\cran B\ominus
B^{1/2}\cK^\perp.
\]
Then the shorted operator $B_{\cK}$ gets the form $B_{\cK}=B^{1/2}P_{\Omega_{\cK}}B^{1/2}$ and
\[
 {\ran}B_{\cK}^{1/2}= \cK \cap{\ran} B^{1/2} \ ,
\]
see \cite{Kr1}. In particular, this implies the equivalence:
\begin{equation}\label{nol}
B_{\cK}=0 \iff \cK\cap\ran B^{1/2}=\{0\} \ .
\end{equation}

{Note that with respect to orthogonal decomposition $\cH = \cK \oplus \cK^\perp $ a bounded self-adjoint operator
$B$ has the block-matrix form:}
\[
B=\begin{bmatrix}B_{11}&B_{12}\cr B^*_{12}&B_{22}
\end{bmatrix}:\begin{array}{l}\cK\\\oplus\\\cK^\perp \end{array}\to
\begin{array}{l}\cK\\\oplus\\\cK^\perp \end{array},
\]
where $B_{11}\in\bB(\cK)$, $B_{22}\in\bB(\cK^\perp)$,
$B_{12}\in\bB(\cK^\perp,\cK)$.
It is well-known (see e.g. ) Recall that the operator $B$ is non-negative if and only if \cite{KrO}
\begin{equation}\label{POZ}
B_{22}\ge 0,\; \ran B^*_{12}\subset\ran B^{1/2}_{22},\,\; B_{11}\ge
\left(B^{[-1/2]}_{22}B^*_{12}\right)^*\left(B^{[-1/2]}_{22}B^*_{12}\right) \ .
\end{equation}
{Here $B_{22}^{[-1/2]}: = (B_{22}^{1/2}\upharpoonright \cran B_{22})^{-1}.$}
Then operator $B_\cK$ is given by the block matrix
\begin{equation}\label{posshor1}
B_\cK=\begin{bmatrix}B_{11}-\left(B^{[-1/2]}_{22}B^*_{12}\right)^*\left(B^{[-1/2]}_{22}B^*_{12}\right)&0\cr
0&0\end{bmatrix}.
\end{equation}

Conditions \eqref{POZ} imply that the block operator matrix
 \[
B=\begin{bmatrix}B_{11}&B_{12}\cr B^*_{12}&B_{22}
\end{bmatrix}:\begin{array}{l}\cK\\\oplus\\\cK^\perp \end{array}\to
\begin{array}{l}\cK\\\oplus\\\cK^\perp \end{array}
\]
is non-negative if and only if it takes the form \cite{Shmul}
\begin{equation}\label{shmul1}
B=\begin{bmatrix}B_{11}&B^{1/2}_{11}\Gamma B^{1/2}_{22}\cr
B^{1/2}_{22}\Gamma^* B^{1/2}_{11}&B_{22}
\end{bmatrix}
\end{equation}
where $\Gamma:\cran B_{22}\to\cran B_{11}$ is a contraction. Then from (\ref{posshor1}) it follows that
\begin{equation}
\label{shmul2} B_\cK=\begin{bmatrix}
B^{1/2}_{11}(I-\Gamma^*\Gamma)B^{1/2}_{11}&0\cr 0&0\end{bmatrix},\;
B_{\cK^\perp}=\begin{bmatrix} 0&0\cr
0&B^{1/2}_{22}(I-\Gamma\Gamma^*)B^{1/2}_{22}\end{bmatrix}.
\end{equation}

\subsection{Friedrichs and Kre\u\i n self-adjoint extensions}\label{FrKR}
Let $\cH$ be a Hilbert space and let $\cA$ be a densely defined
closed, symmetric, and non-negative operator. Denote by ${\cA}^*$
the adjoint to $\cA$. Recall that the operator $\cA$ admits at least
one non-negative self-adjoint extension $\cA_{\rm F}$ (called the
\textit{Friedrichs}, or \textit{"hard"} extension \cite{Kr1}), which is
defined as follows. Denote by $\mathfrak{a}[\cdot,\cdot]$ the
closure of corresponding to $\cA$ sesquilinear form
\[
\mathfrak{a}[f,g]=(\cA f,g),\; f,g\in\dom(\cA) \ ,
\]
and let $\cD[\mathfrak{a}]$ be domain of this closure. According to the \textit{first representation theorem} \cite{Ka}
there exists a unique non-negative self-adjoint operator $\cA_F$ associated with $\mathfrak{a}[\cdot,\cdot]$,
i.e.,
\[
(\cA_{\rm F} h ,\psi)=\mathfrak{a}[h,\psi],\; \psi\in\cD[\mathfrak{a}],\; h\in\dom
\cA_{\rm F} \ .
\]
One clearly gets that $\cA\subset \cA_{\rm F}\subset\cA^*$ and that $\dom \cA_{\rm F}=\cD[\mathfrak{a}]\cap\dom \cA^*.$
Moreover, by the \textit{second representation theorem} \cite{Ka} the following equalities
\[
\cD[\mathfrak{a}]=\dom\cA^{1/2}_{\rm
F}\quad\mbox{and}\quad\mathfrak{a}[\phi,\psi]=(\cA^{1/2}_{\rm F}\phi,\cA^{1/2}_{\rm F}\psi),\ \
\phi, \psi\in \cD[\mathfrak{a}] \ ,
\]
also hold.

In \cite{Kr1} M.G.~Kre\u{\i}n discovered one more non-negative self-adjoint extension $\cA_{\rm K}$ of $\cA$.
It has the extremal property to be a \textit{minimal}, whereas the Friedrichs extension $\cA_{\rm F}$ is the \textit{maximal}
{(in the sense of the corresponding associated closed quadratic forms)} {among \textit{all} other non-negative self-adjoint
extensions $\mathcal{C}$ of $\cA$ : $\cA_{\rm K} \leq \mathcal{C} \leq \cA_{\rm F}$.} These inequalities are
equivalent to inequalities for resolvents :
\[
(\cA_{\rm F}+aI)^{-1}\le (\cC+a I)^{-1}\le (\cA_{\rm K}+a I)^{-1},\; a>0 \ ,
\]
see \cite{Ka}, \cite{Kr1}. The extension $\cA_{\rm K}$ is called the \textit{Kre\u{\i}n extension} of $\cA$. If $\cA$ is
a positive-definite symmetric operator, then the subspace $\ker\cA^*$ is nontrivial and one gets:
\[
\dom\cA_{\rm K}=\dom \cA\dot+\ker\cA^* \ ,
\]
see \cite{Kr1}, whereas
\begin{equation*}
\dom\cA_{\rm F}=\dom \cA \dot+ (\cA_{\rm F})^{-1}\ker\cA^* \ .
\end{equation*}

Let $L_1$ and $L_2$ be closed linear operators defined in a Hilbert
space $\cH$, taking values in a Hilbert space $\sH$, such that $L_2$
is extension of $L_1$:
\begin{equation} \label{cond}
L_1\subset L_2.
\end{equation}
Then operators $L^*_1L_1 $ and $L^*_2L_2$ are self-adjoint and  non-negative. Since $L_2^*\subset L^*_1$, the following
relations are valid:
\[
\dom(L^*_2L_1)=\dom(L^*_1L_1)\cap\dom(L^*_2L_2)={\dom L_1\cap\dom(L^*_2L_2).}
\]
Suppose that
\begin{equation}
\label{densedom}\dom(L^*_1L_1)\cap\dom(L^*_2L_2)\ne\{0\} \ .
\end{equation}
Then operator $\cA$ defined as follows:
\begin{equation}
\label{Factor}
\begin{array}{l}
\cA f:=L^*_2L_1 f, \ f\in\dom\cA, \ \ {\rm{for}} \ \ \dom\cA:=\dom(L^*_2L_1)\ ,
\end{array}
\end{equation}
is closed and  symmetric. Since  $(\cA f,f)=||L_1f||^2\ge 0$ for all
$f\in\dom \cA$, the operator $\cA$ is non-negative. This kind of operators $\cA$ we call the
\textit{operators in divergence form} \cite{ArlKov_2011}. The next assertions are established in \cite{ArlKov_2011}.
\begin{theorem}\label{Sim} {\rm\cite [Theorem 3.1]{ArlKov_2011}}. Let $L_1, L_2:
\cH\to\sH$ be closed and densely defined operators, satisfying
condition \eqref{cond}.  If the operator $\cA=L^*_2L_1$ is densely
defined and its adjoint is given by
\[
\cA^*=L^*_1L_2,
\]
 then
\begin{enumerate}
\item the Friedrichs extension of $\cA$ is given by the
operator $L^*_1L_1$, i.e.,
\[
\dom \cA_{\rm F}=\{f\in \dom L_1: L_1f\in\dom L_1^*\},\; \cA_{\rm
F}f=L^*_1L_1f, \; f\in\dom\cA_{\rm F},
\]
\item
\[
\dom \cA^{1/2}_{\rm F}=\dom  L_1,\; (\cA^{1/2}_{\rm
F}u,\cA^{1/2}_{\rm F}v)=(L_1u, L_1v),\; u,v\in\dom L_1,
\]
\item  the Kre\u{\i}n extension of $\cA$ is the operator  $\cA_{\rm K} =L_2^*P_{\cran
L_1}L_2$, i.e.,
\[
\begin{array}{l}
\dom \cA_{\rm K}=\{f\in \dom L_2:P_{\cran L_1}L_2f\in\dom L^*_2\},\\
\cA_{\rm K} f=L^*_2P_{\cran L_1}L_2f,\; f\in\dom \cA_{\rm K},
\end{array}
\]
and
\[
\dom \cA^{1/2}_{\rm K}=\dom L_2,\; (\cA^{1/2}_{\rm
K}u,\cA^{1/2}_{\rm K}v)=(P_{\cran L_1}L_2u,P_{\cran L_1}L_2v),\;
u,v\in\dom L_2,
\]
\item the Friedrichs and the Kre\u{\i}n extensions of $\cA$
are transversal, i.e., one gets for their domains:
\[
\dom \cA_{\rm F}+\dom\cA_{\rm K}=\dom \cA^*.
\]
\end{enumerate}
\end{theorem}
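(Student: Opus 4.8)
The plan is to treat the Friedrichs extension by form methods, to realise the Kre\u{\i}n extension as an operator of the shape $M^{*}M$, and then to read transversality off the resulting explicit domain descriptions; the hypothesis $\cA^{*}=L_{1}^{*}L_{2}$ will be the mechanism that lets one pass between the two adjoints $L_{1}^{*}\supset L_{2}^{*}$. For items (1) and (2): since $L_{1}\subset L_{2}$ forces $L_{2}^{*}\subset L_{1}^{*}$, one checks directly that $\cA\subset L_{1}^{*}L_{1}$, so $L_{1}^{*}L_{1}$ is a non-negative self-adjoint extension of $\cA$, and that for $f,g\in\dom\cA$ (whence $f,g\in\dom L_{1}\subset\dom L_{2}$, $L_{1}f\in\dom L_{2}^{*}$ and $L_{2}g=L_{1}g$)
\[
\mathfrak a[f,g]:=(\cA f,g)=(L_{2}^{*}L_{1}f,g)=(L_{1}f,L_{2}g)=(L_{1}f,L_{1}g).
\]
Hence the form-norm of $\mathfrak a$ is the graph norm $\|\cdot\|_{L_{1}}$, so the closure $\cD[\mathfrak a]$ of $\dom\cA$ in that norm lives inside the Hilbert space $(\dom L_{1},(\cdot,\cdot)_{L_{1}})$. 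The decisive point is that $\cD[\mathfrak a]=\dom L_{1}$, i.e.\ $\dom\cA$ is a form core for $L_{1}$; this is where $\cA^{*}=L_{1}^{*}L_{2}$ is used. If $u\in\dom L_{1}$ is $(\cdot,\cdot)_{L_{1}}$-orthogonal to $\dom\cA$, then, using $L_{2}u=L_{1}u$ and $L_{1}f\in\dom L_{2}^{*}$,
\[
0=(u,f)_{L_{1}}=(u,f)+(L_{1}u,L_{1}f)=(u,f)+(L_{2}u,L_{1}f)=(u,f)+(u,\cA f)=(u,(\cA+I)f)
\]
for every $f\in\dom\cA$, so $u\perp\ran(\cA+I)$; as $\cA\ge 0$ is closed, $\ran(\cA+I)$ is closed with $\ran(\cA+I)^{\perp}=\ker(\cA^{*}+I)=\ker(L_{1}^{*}L_{2}+I)$, whence $u\in\dom L_{1}$ gives $L_{1}^{*}L_{1}u=L_{1}^{*}L_{2}u=-u$, so $\|L_{1}u\|^{2}=-\|u\|^{2}$ and $u=0$. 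Thus $\cD[\mathfrak a]=\dom L_{1}$ with $\mathfrak a[u,v]=(L_{1}u,L_{1}v)$, which is precisely the closed form of $L_{1}^{*}L_{1}$ (by the second representation theorem $\dom(L_{1}^{*}L_{1})^{1/2}=\dom|L_{1}|=\dom L_{1}$ and $((L_{1}^{*}L_{1})^{1/2}u,(L_{1}^{*}L_{1})^{1/2}v)=(L_{1}u,L_{1}v)$). By the uniqueness in the first representation theorem $\cA_{\rm F}=L_{1}^{*}L_{1}$; item (2) is the second representation theorem for $\cA_{\rm F}$, and the domain formula in (1) follows from $\dom\cA_{\rm F}=\cD[\mathfrak a]\cap\dom\cA^{*}$.

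For item (3): set $M:=P_{\cran L_{1}}L_{2}$, which is closed, densely defined, with $\dom M=\dom L_{2}$, $M^{*}=L_{2}^{*}P_{\cran L_{1}}$ and $L_{1}\subset M$; by von Neumann's theorem $M^{*}M=L_{2}^{*}P_{\cran L_{1}}L_{2}$ is non-negative self-adjoint, and one verifies $\cA\subset M^{*}M$ (for $f\in\dom\cA$, $Mf=P_{\cran L_{1}}L_{1}f=L_{1}f\in\dom L_{2}^{*}=\dom M^{*}$ and $M^{*}Mf=L_{2}^{*}L_{1}f=\cA f$). It then remains to recognise $M^{*}M$ as the Kre\u{\i}n extension, i.e.\ the \emph{minimal} non-negative self-adjoint extension of $\cA$. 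I would argue this dually to the Friedrichs step: for an arbitrary non-negative self-adjoint extension $\cC$ of $\cA$ and $u\in\dom\cC$, the identity $(\cC f,u)=(f,\cC u)$ for $f\in\dom\cA$, combined with the form-core property of $\dom\cA$ for $L_{1}$ and with $\cA^{*}=L_{1}^{*}L_{2}$ (which in particular gives $u\in\dom L_{2}$), yields $Mu=P_{\cran L_{1}}L_{2}u\in\dom L_{1}^{*}$ and $L_{1}^{*}Mu=\cC u$; from this one derives the Birman--Kre\u{\i}n--Vishik-type inequality $\|Mu\|^{2}\le(\cC u,u)$, which says exactly that the closed form $u\mapsto\|Mu\|^{2}$ on $\dom L_{2}$ is minimal among the forms of non-negative self-adjoint extensions of $\cA$. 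Hence $M^{*}M=\cA_{\rm K}$, and unravelling $\dom(M^{*}M)$ and $\dom(M^{*}M)^{1/2}=\dom M=\dom L_{2}$ gives the stated formulas.

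For item (4): the domain descriptions above give $\dom\cA_{\rm F}\cap\dom\cA_{\rm K}=\{f\in\dom L_{1}:L_{1}f\in\dom L_{1}^{*}\cap\dom L_{2}^{*}\}=\{f\in\dom L_{1}:L_{1}f\in\dom L_{2}^{*}\}=\dom\cA$, so $\cA_{\rm F}$ and $\cA_{\rm K}$ are disjoint. For the sum, take $h\in\dom\cA^{*}=\dom(L_{1}^{*}L_{2})$; with $b:=(\cA_{\rm K}+I)^{-1}(\cA^{*}h+h)\in\dom\cA_{\rm K}$ and $\wt a:=h-b$, the inclusion $\cA_{\rm K}\subset\cA^{*}$ gives $(\cA^{*}+I)\wt a=0$, so $\wt a\in\mathfrak N:=\ker(\cA^{*}+I)$, and the claim is reduced to $\mathfrak N\subset\dom\cA_{\rm F}+\dom\cA_{\rm K}$. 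For this, given $\phi\in\mathfrak N$ (so $M\phi\in\dom L_{1}^{*}$ and $L_{1}^{*}M\phi=-\phi$), set $a:=-(L_{1}^{*}L_{1}+I)^{-1}\phi\in\dom\cA_{\rm F}$ and $c:=\phi-a$; a short computation gives $Mc\in\dom L_{1}^{*}$ with $L_{1}^{*}Mc=a\in\dom L_{1}$, so it only remains to verify $Mc\in\dom M^{*}\,(=\dom L_{2}^{*})$, i.e.\ that $(Mf,Mc)=(f,a)$ for $f$ running through representatives of $\dom L_{2}/\dom L_{1}$ — and this is again where $\cA^{*}=L_{1}^{*}L_{2}$ and the relation $L_{1}^{*}L_{2}\phi=-\phi$ enter. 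Then $\wt a=a+c$ and $h=a+(c+b)$ exhibit $h\in\dom\cA_{\rm F}+\dom\cA_{\rm K}$.

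The main obstacle is concentrated in the three places where $\cA^{*}=L_{1}^{*}L_{2}$ must be brought to bear: that $\dom\cA$ is a form core for $L_{1}$ (needed for $\cA_{\rm F}=L_{1}^{*}L_{1}$), the inequality $\|Mu\|^{2}\le(\cC u,u)$ establishing that $M^{*}M$ is the \emph{minimal} non-negative self-adjoint extension (needed for $\cA_{\rm K}=L_{2}^{*}P_{\cran L_{1}}L_{2}$), and the verification $Mc\in\dom L_{2}^{*}$ in the transversality argument. In each case one controls only membership in the larger space $\dom L_{1}^{*}\supset\dom L_{2}^{*}$ a priori, and equality of the two adjoints is exactly what closes the gap.
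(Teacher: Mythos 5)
The paper itself gives no proof of this theorem; it is quoted verbatim from \cite[Theorem 3.1]{ArlKov_2011}, so your argument has to stand on its own. Your treatment of items (1)--(2) does: the computation $\mathfrak a[f,g]=(L_1f,L_1g)$ on $\dom\cA$, and the graph-orthogonality argument showing that $\dom\cA$ is a core for $L_1$ (an element $u$ orthogonal to $\dom\cA$ in the $L_1$-graph inner product lies in $\ker(\cA^*+I)=\ker(L_1^*L_2+I)$, and since $u\in\dom L_1$ gives $L_2u=L_1u$, one gets $\|L_1u\|^2=-\|u\|^2$, so $u=0$), is correct and complete, and it identifies $\cA_{\rm F}=L_1^*L_1$ exactly as one should.

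The genuine gap is in item (3), and it propagates into (4). You assert that $M:=P_{\cran L_1}L_2$ is closed and then invoke von Neumann's theorem to conclude that $M^*M=L_2^*P_{\cran L_1}L_2$ is self-adjoint. But a bounded projection composed with a closed operator is in general \emph{not} closed (take a finite-rank projection composed with an unbounded self-adjoint operator), and under the present hypotheses the closedness of $P_{\cran L_1}L_2$ --- equivalently, the closedness of the form $u\mapsto\|P_{\cran L_1}L_2u\|^2$ on $\dom L_2$ --- is essentially the nontrivial content of the assertion that $\cA_{\rm K}$ has form domain $\dom L_2$; it must be \emph{derived} from $\cA^*=L_1^*L_2$, not assumed. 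Everything downstream leans on it: the self-adjointness of $M^*M$, and the passage of your inequality $\|Mu\|^2\le(\cC u,u)$ from $\dom\cC$ to the form domain $\cD[\cC]$ (that inequality itself can be completed from the core property of (1) plus Cauchy--Schwarz in the form of $\cC$, but the extension to $\cD[\cC]$ again needs $M$ closed). In item (4) your reduction is correct bookkeeping, but the step you defer --- verifying $Mc\in\dom L_2^*$ for $c=\phi+(\cA_{\rm F}+I)^{-1}\phi$, $\phi\in\ker(\cA^*+I)$ --- is precisely the substance: transversality of $\cA_{\rm F}$ and $\cA_{\rm K}$ fails for general non-negative symmetric operators (e.g.\ whenever $\cA_{\rm F}=\cA_{\rm K}$ while $\cA$ is not essentially self-adjoint), so the hypothesis must do concrete work exactly there, and your particular ansatz amounts to the unproved identity $(\cA_{\rm K}+I)^{-1}\phi-(\cA_{\rm F}+I)^{-1}\phi=\phi$ on $\ker(\cA^*+I)$, which you neither verify nor show to be the right choice of decomposition. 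As written, items (3) and (4) are therefore not proved.
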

\section{Main results}\label{main-results}

This section collects our main results. They are based on some new ideas and our lines reasoning improve the results
in \cite{Daele}, \cite{schmud}, \cite{BraNeidh}. We give new proofs and generalise the Van Daele--Schm\"{u}dgen
Theorems \ref{daele82-1},\ref{daele82-2},\ref{SchmTh1},\ref{SchmTh2} and the Brasche--Neidhardt assertion \cite{BraNeidh}.

Our observations also lead to certain new applications, see Section \ref{Appl}.
\subsection{Trivial intersections of operator ranges with subspaces}\label{triv-intersec}
{We start this section by a useful refinement of the von Neumann Theorem \ref{NeumTh1}, which we reformulated in
Theorem \ref{NeumTh2} in terms of ranges.}

{A bounded linear operator $J$ on a Hilbert space $\cH$ is self-adjoint and unitary operator
if and only if one has:
\[
J=J^*=J^{-1} \ .
\]
}
{Such operator is often called \textit{fundamental symmetry}, or \textit{signature operator} \cite{Bognar}.
Note that $J$ is a fundamental symmetry operator if and only if
\[
J = 2P-I,
\]
where $P$ is an orthogonal projection in $\cH$.}
{\begin{proposition} \label{dobavl}
Let $\cR$ be a non-closed linear manifold in a Hilbert space $\cH$. Then the following assertions
are equivalent:
\begin{enumerate}
\def\labelenumi{\rm (\roman{enumi})}
\item There exists in $\cH$ an orthogonal projection $P$ such that
\[
\ran P\cap\cR=\{0\} \ \ \ {\rm{and}}  \ \ \ \ran (I-P)\cap\cR=\{0\} \ .
\]
\item There exists in $\cH$ a fundamental symmetry $J$  such that
\[
J\cR\cap\cR=\{0\} \ .
\]
\end{enumerate}
\end{proposition}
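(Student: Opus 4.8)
The plan is to prove the two implications separately, exploiting the dictionary $J = 2P - I$ between fundamental symmetries and orthogonal projections that was recalled just above the statement. The key observation connecting the two conditions is that for a fundamental symmetry $J$, the eigenspaces $\ran P = \ker(J-I)$ and $\ran(I-P) = \ker(J+I)$ decompose $\cH$ orthogonally, and a vector $f$ lies in $J\cR \cap \cR$ precisely when both $f$ and $Jf$ lie in $\cR$; since $\cR$ is a linear manifold, this is equivalent to $Pf = \tfrac12(f + Jf) \in \cR$ and $(I-P)f = \tfrac12(f - Jf) \in \cR$. This is the crux of the equivalence and I expect it to be the only genuinely substantive point; everything else is bookkeeping.

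First I would prove (ii)$\Rightarrow$(i). Suppose $J$ is a fundamental symmetry with $J\cR \cap \cR = \{0\}$, and set $P := \tfrac12(J + I)$, an orthogonal projection by the remark preceding the proposition. Let $f \in \ran P \cap \cR$. Then $Jf = f$ (since $f$ is in the $+1$ eigenspace of $J$), so $Jf = f \in \cR$, whence $f \in J\cR \cap \cR = \{0\}$. Hence $\ran P \cap \cR = \{0\}$. Symmetrically, if $f \in \ran(I-P) \cap \cR$, then $Jf = -f \in \cR$ because $\cR$ is a linear manifold, so again $f \in J\cR \cap \cR = \{0\}$. This gives (i).

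Conversely, for (i)$\Rightarrow$(ii), let $P$ be an orthogonal projection with $\ran P \cap \cR = \ran(I-P) \cap \cR = \{0\}$, and set $J := 2P - I$, a fundamental symmetry. Let $f \in J\cR \cap \cR$; write $f = Jg$ with $g \in \cR$, and also $f \in \cR$. Then $g = Jf$ (because $J^2 = I$), so the two vectors $f \in \cR$ and $Jf = g \in \cR$ both lie in $\cR$. Consequently $Pf = \tfrac12(f + Jf) \in \cR$ and $(I-P)f = \tfrac12(f - Jf) \in \cR$, using that $\cR$ is closed under linear combinations. But $Pf \in \ran P$ and $(I-P)f \in \ran(I-P)$, so by hypothesis $Pf = 0$ and $(I-P)f = 0$, hence $f = Pf + (I-P)f = 0$. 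Therefore $J\cR \cap \cR = \{0\}$, which is (ii).

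The main (modest) obstacle is simply making sure one exploits that $\cR$ is a \emph{linear manifold} at the two places where one passes from ``$f \in \cR$ and $Jf \in \cR$'' to ``$\tfrac12(f\pm Jf) \in \cR$'' and from ``$f \in \cR$'' to ``$-f \in \cR$''; no topology or non-closedness of $\cR$ is actually needed for the equivalence itself (non-closedness is only the standing hypothesis making the statement non-vacuous, e.g. via Theorem \ref{SchmTh1} or Theorem \ref{NeumTh2}). I would close with a one-line remark that combining this proposition with Theorem \ref{NeumTh2} (or Theorem \ref{SchmTh1}) yields the refined von Neumann statement: for an unbounded self-adjoint $H$ there is a fundamental symmetry $J$ with $\dom H \cap \dom(JHJ) = \{0\}$, since $\dom(JHJ) = J\,\dom H$.
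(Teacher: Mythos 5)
Your proof is correct and follows essentially the same route as the paper: both directions rest on the dictionary $J=2P-I$, the identities $Pf=\tfrac12(f+Jf)$, $(I-P)f=\tfrac12(f-Jf)$, and the linearity of $\cR$. The only cosmetic difference is that in (i)$\Rightarrow$(ii) the paper concludes from $Pf\in\ran P\cap\cR=\{0\}$ that $f\in\ran(I-P)\cap\cR=\{0\}$, while you use both projections simultaneously; this is the same argument.
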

}
\begin{proof}
{(i) $\Rightarrow$ (ii). Set $J:=2P-I$. Let $f\in \cR$ and suppose $Jf\in\cR$. Then $2Pf\in\cR$. But
$\ran P\cap\cR=\{0\}$. Hence $f\in\ran (I-P).$ Since $\ran (I-P)\cap\cR=\{0\}$, we obtain $f=0$, i.e.,
the statement (ii) holds.}

{(ii) $\Rightarrow$ (i). Let $P:=(I+J)/2$. Then $P$ is orthogonal projection in $\cH$.
Suppose that $f\in(\ran P\cap\cR)$. Then $Jf=Pf=f\in\cR$ and by virtue of (ii) one obtains $f=0$. A similar argument
is valid for $f\in(\ran (I-P)\cap\cR=\{0\})$.}
\end{proof}

\begin{proposition}
\label{shm01} Let $\cH$ be a Hilbert space. Let $A\in\bB^{+}_{0}(\cH)$ and $\ran A\ne \cH.$
\begin{enumerate}
\item
Let $\sM$ be a subspace in $\cH$ and $P_\sM$ be orthogonal projection on $\sM$. We define the operator
$A_1:=A^{1/2}P_\sM A^{1/2}$.
\begin{enumerate}
\item Then one gets
\[
\begin{array}{l}\sM^\perp\cap\ran A^{1/2}=\{0\}\iff\ker A_1=\{0\}\iff {\overline{A^{1/2}\sM}=\cH},\\
 \sM\cap\ran A^{1/2}=\{0\}\iff \ran
A^{1/2}_1\cap\ran A=\{0\}.
\end{array}
\]
Hence, the
 following statements are equivalent:
\begin{enumerate}
\def\labelenumi{\rm (\roman{enumi})}
\item $\ran A^{1/2}_1\cap\ran A=\{0\}$ and $\ker A_1=\{0\},$
\item $\sM\cap\ran A^{1/2}=\sM^\perp\cap\ran A^{1/2}=\{0\},$
\item  {the linear manifolds $ A^{1/2}\sM$ and $A^{1/2}\sM^\perp$ are dense in $\cH$.
}
\end{enumerate}
\item If $\ker A_1=\{0\}$, then
\begin{equation}
\label{restr} ||A^{-1/2}_1h||=||A^{-1/2}h||\;\mbox{for all}\;
h\in\ran A^{1/2}_1.
\end{equation}
\end{enumerate}
\item
If $A,A_1\in\bB^+(\cH)$, $\ran A^{1/2}_1\subset\ran A^{1/2}$ and if
\eqref{restr} holds true, then $A_1=A^{1/2}PA^{1/2}$, where $P$ is
an orthogonal projection in $\cH$.
\end{enumerate}
\end{proposition}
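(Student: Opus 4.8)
The plan is to reduce the whole proposition to two elementary facts about the injective operator $A^{1/2}$ (note $A\in\bB^{+}_{0}(\cH)$ forces $\ker A^{1/2}=\{0\}$) and about the bounded operator $C:=P_\sM A^{1/2}$, for which $C^*C=A_1$. First I would record the identities on which everything rests. Applying \eqref{usef} with $F=A$, $M=P_\sM$ (and $\ran P_\sM^{1/2}=\sM$) gives $\ran A_1^{1/2}=A^{1/2}\sM$. Next, since $\ker A_1=\ker(C^*C)=\ker C$,
\[
\ker A_1=\{f\in\cH:A^{1/2}f\in\sM^\perp\}=\bigl(\ran(A^{1/2}P_\sM)\bigr)^{\perp}=\bigl(A^{1/2}\sM\bigr)^{\perp},
\]
and moreover $(A_1f,f)=\|P_\sM A^{1/2}f\|^2$ for all $f$.

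For the first chain in 1(a): the displayed formula for $\ker A_1$ shows at once that $\ker A_1=\{0\}\iff\overline{A^{1/2}\sM}=\cH$, while injectivity of $A^{1/2}$ turns ``$A^{1/2}f\in\sM^\perp\Rightarrow f=0$'' into $\sM^\perp\cap\ran A^{1/2}=\{0\}$. For the second statement I use $\ran A=A^{1/2}(\ran A^{1/2})$ together with injectivity of $A^{1/2}$ to get $\ran A_1^{1/2}\cap\ran A=A^{1/2}\sM\cap A^{1/2}(\ran A^{1/2})=A^{1/2}\bigl(\sM\cap\ran A^{1/2}\bigr)$, which is $\{0\}$ precisely when $\sM\cap\ran A^{1/2}=\{0\}$. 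The equivalence (i)$\iff$(ii)$\iff$(iii) then falls out by applying the first chain once to $\sM$ and once to $\sM^\perp$ (observing $A^{1/2}P_{\sM^\perp}A^{1/2}=A-A_1$) and combining with the second statement.

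For 1(b): assuming $\ker A_1=\{0\}$ we also have $\ker C=\{0\}$, so in the polar decomposition $C=V|C|=VA_1^{1/2}$ the partial isometry $V$ is in fact an isometry ($V^*V=I$), and $\ran V=\cran C=\cran(P_\sM A^{1/2})\subseteq\sM$, hence $P_\sM V=V$. Taking adjoints in $C=VA_1^{1/2}$ gives $A^{1/2}P_\sM=A_1^{1/2}V^*$; multiplying on the right by $V$ and using $V^*V=I$, $P_\sM V=V$ yields the clean factorisation $A_1^{1/2}=A^{1/2}V$. Then any $h\in\ran A_1^{1/2}$ is of the form $h=A_1^{1/2}g=A^{1/2}(Vg)$, and injectivity of $A^{1/2}$ and $A_1^{1/2}$ identifies $A_1^{-1/2}h=g$ and $A^{-1/2}h=Vg$; since $V$ is isometric, $\|A_1^{-1/2}h\|=\|g\|=\|Vg\|=\|A^{-1/2}h\|$, which is \eqref{restr}.

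For part 2: by the Douglas Theorem \ref{Douglas}, $\ran A_1^{1/2}\subset\ran A^{1/2}$ yields $A_1^{1/2}=A^{1/2}D$ with $D\in\bB(\cH)$, and we may take the $D$ with $\ran D\subset\cran A^{1/2}=(\ker A)^\perp$, so that $\ker D=\ker A_1^{1/2}=\ker A_1$. Self-adjointness of $A_1^{1/2}$ gives also $A_1^{1/2}=D^*A^{1/2}$, hence $A_1=A_1^{1/2}A_1^{1/2}=A^{1/2}DD^*A^{1/2}$, so it only remains to see that $P:=DD^*$ is an orthogonal projection, i.e. (being self-adjoint) that $D$ is a partial isometry with initial space $(\ker D)^\perp$. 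Here \eqref{restr} does the job: for $h=A_1^{1/2}g$ with $g\in(\ker A_1)^\perp$ one has $A_1^{-1/2}h=g$, while $h=A^{1/2}(Dg)$ with $Dg\in(\ker A)^\perp$ forces $A^{-1/2}h=Dg$, so \eqref{restr} reads $\|Dg\|=\|g\|$ for every $g\in(\ker A_1)^\perp=(\ker D)^\perp$; together with $Dg=0$ on $\ker D$ this gives $D^*D=P_{(\ker D)^\perp}$, so $D$ is a partial isometry and $DD^*$ is the orthogonal projection onto its closed range. The main obstacle, to my mind, is hitting on the factorisations $A_1^{1/2}=A^{1/2}V$ (resp. $A_1^{1/2}=A^{1/2}D$) — after which the statements become almost automatic — together with the careful bookkeeping of the unbounded inverses $A^{-1/2}$ and $A_1^{-1/2}$ on the operator ranges $\ran A^{1/2}$ and $\ran A_1^{1/2}$, i.e. checking that the preimages produced by these factorisations are exactly the ones selected by the definitions of $A^{-1/2}h$ and $A_1^{-1/2}h$; everything else is routine manipulation of ranges and injectivity of $A^{1/2}$.
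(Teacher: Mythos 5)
Your argument is correct. For parts 1(a) and 1(b) it is essentially the paper's own proof: the range identity $\ran A_1^{1/2}=A^{1/2}\sM$ (you get it from \eqref{usef}, the paper from the Douglas theorem directly, which amounts to the same thing), the kernel computation $\ker A_1=(A^{1/2}\sM)^\perp$, and the isometric factorisation $A_1^{1/2}=A^{1/2}V$ with $\ran V\subseteq\sM$, which is exactly the paper's $A_1^{1/2}=A^{1/2}V^*$ obtained from the norm identity $\|A_1^{1/2}f\|=\|P_\sM A^{1/2}f\|$; your route through the polar decomposition of $C=P_\sM A^{1/2}$ is just a slightly more packaged way of producing the same isometry. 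The genuine difference is part (2): the paper gives no proof and simply refers to \cite{ArlKov_2013}, whereas you supply a complete argument — Douglas factorisation $A_1^{1/2}=A^{1/2}D$ with the canonical choice $\ran D\subset\cran A^{1/2}$, $\ker D=\ker A_1$, the identity $A_1=A^{1/2}DD^*A^{1/2}$, and the observation that \eqref{restr} forces $\|Dg\|=\|g\|$ on $(\ker D)^\perp$, hence $D^*D=P_{(\ker D)^\perp}$ and $DD^*$ is an orthogonal projection. This is sound, including the bookkeeping with the (generalized) inverses $A^{-1/2}$, $A_1^{-1/2}$ on the respective ranges, so your write-up makes the proposition self-contained where the paper is not.
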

\begin{proof}
(1) By definition of $A_1$ and by the Douglas Theorem \ref{Douglas} we have $\ran A^{1/2}_1=A^{1/2}\sM$.
It follows then that
$$\ran A^{1/2}_1\cap\ran A=\{0\} \iff \sM\cap\ran A^{1/2}=\{0\}.$$
It is also clear that
\[
\ker A_1=\{0\}\iff \cran A_1=\cH\iff \sM^\perp\cap\ran A^{1/2} = \{0\}.
\]
The equality: $||A^{1/2}_1f||^2=||P_\sM A^{1/2}f||^2$ for all $f\in\cH$,
implies that there is an isometry $V:\sM\to \cH,$ $\ran V=\cH$ such
that $A^{1/2}_1h=VP_\sM A^{1/2}h$, $h\in\cH$. Hence
\[
A^{1/2}_1=A^{1/2}V^*,\; A^{-1/2}h=V^* A^{-1/2}_1h,\; h\in\ran
A^{1/2}_1,
\]
where $V^*:\cH\to\sM$, $\ran V^*=\sM$ and $V^*$ is isometry.

For the proof of the statement (2) we refer to \cite{ArlKov_2013}.
\end{proof}
\begin{proposition} \label{mars1}
Let $A\in\bB_{0}^+(\cH)$ and $\ran A\ne \cH.$
Let $P_1$ and $P_2$ be two orthogonal projections in $\cH$ such that
\begin{equation}
\label{dvoi}
\ran P_k\cap \ran A^{1/2}=\ran(I-P_k)\cap\ran A^{1/2}=\{0\},\; k=1,2
\end{equation}
If we define
\[
A_1:=A^{1/2}P_1 A^{1/2},\;A_2:=A^{1/2}_1P_2 A^{1/2}_1 ,
\]
then
\[
A_2=A^{1/2}P_{12}A^{1/2},
\]
where $P_{12}$ is an orthogonal projection such that
\begin{equation}
\label{dvoi2}
\ran P_{12}\cap \ran A^{1/2}=\ran(I-P_{12})\cap\ran A^{1/2}=\{0\}.
\end{equation}
\end{proposition}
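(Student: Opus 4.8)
The plan is to reduce everything to an application of Proposition~\ref{shm01}, used twice: once to identify $A_1=A^{1/2}P_1A^{1/2}$ as itself a non-singular operator in $\bB^+(\cH)$ whose square-root range has the appropriate trivial-intersection properties, and once more to recognise the doubly-compressed operator $A_2=A_1^{1/2}P_2A_1^{1/2}$ as a single compression $A^{1/2}P_{12}A^{1/2}$. First I would record that, by \eqref{dvoi} for $k=1$, Proposition~\ref{shm01}(1)(a) applies to $\sM=\ran P_1$: it gives $\ker A_1=\{0\}$, so $A_1\in\bB_0^+(\cH)$, and it gives $\ran A_1^{1/2}\cap\ran A=\{0\}$. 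In particular $\ran A_1\ne\cH$ (indeed $\ran A_1^{1/2}\subsetneq\ran A^{1/2}$, since $A^{1/2}\sM$ is a proper subset of $A^{1/2}\cH$ — if they were equal we would contradict $\ran A_1^{1/2}\cap\ran A=\{0\}$ together with density of $\ran A$). Moreover the key estimate \eqref{restr} holds: $\|A_1^{-1/2}h\|=\|A^{-1/2}h\|$ for every $h\in\ran A_1^{1/2}$.

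Next, since $A_1\in\bB_0^+(\cH)$ with $\ran A_1\ne\cH$, and since \eqref{dvoi} for $k=2$ says exactly that $\sM_2:=\ran P_2$ has trivial intersections with $\ran A^{1/2}$ — but I actually need trivial intersections of $\ran P_2$, $\ran(I-P_2)$ with $\ran A_1^{1/2}$. This is where the inclusion $\ran A_1^{1/2}\subset\ran A^{1/2}$ does the work: from $\ran P_2\cap\ran A^{1/2}=\{0\}$ we get a fortiori $\ran P_2\cap\ran A_1^{1/2}=\{0\}$, and likewise for $I-P_2$. Hence Proposition~\ref{shm01}(1)(a), applied now with $A$ replaced by $A_1$ and $\sM$ replaced by $\ran P_2$, yields $\ker A_2=\{0\}$, $\ran A_2^{1/2}\cap\ran A_1=\{0\}$, and the isometric relation $\|A_2^{-1/2}h\|=\|A_1^{-1/2}h\|$ for all $h\in\ran A_2^{1/2}$. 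Combining this with \eqref{restr} gives $\|A_2^{-1/2}h\|=\|A^{-1/2}h\|$ for all $h\in\ran A_2^{1/2}$, and we also have $\ran A_2^{1/2}\subset\ran A_1^{1/2}\subset\ran A^{1/2}$.

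Now Proposition~\ref{shm01}(2) applies verbatim to the pair $A,A_2$: from $A,A_2\in\bB^+(\cH)$, $\ran A_2^{1/2}\subset\ran A^{1/2}$, and the norm identity just established, we conclude $A_2=A^{1/2}P_{12}A^{1/2}$ for some orthogonal projection $P_{12}$ in $\cH$. It remains to verify \eqref{dvoi2}. But again by Proposition~\ref{shm01}(1)(a) (read in the direction relating the projection to the operator it defines), $\ran A_2^{1/2}=A^{1/2}\ran P_{12}$, so $\ran P_{12}\cap\ran A^{1/2}=\{0\}$ is equivalent to $\ran A_2^{1/2}\cap\ran A=\{0\}$, which we must check, and $\ran(I-P_{12})\cap\ran A^{1/2}=\{0\}$ is equivalent to $\ker A_2=\{0\}$, which we have already. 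For the remaining equivalence: $\ran A_2^{1/2}\cap\ran A\subset\ran A_1^{1/2}\cap\ran A=\{0\}$ by the first application of Proposition~\ref{shm01} — so this is automatic. That closes the argument. The only genuinely delicate point is the bookkeeping around the nested range inclusions $\ran A_2^{1/2}\subset\ran A_1^{1/2}\subset\ran A^{1/2}$ and propagating the norm identity \eqref{restr} through the composition; everything else is a direct invocation of Proposition~\ref{shm01}. I would also double-check that $\ran A\ne\cH$ is genuinely needed (it is: it guarantees $\ran A$ is dense so that the "$\Leftrightarrow\overline{A^{1/2}\sM}=\cH$" clauses of Proposition~\ref{shm01} are in force, and it rules out the trivial case where no such projections exist).
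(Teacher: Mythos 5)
Your proof is correct, and it reaches the representation $A_2=A^{1/2}P_{12}A^{1/2}$ by a different mechanism than the paper. The paper's proof stays entirely inside part (1) of Proposition~\ref{shm01}: from the factorizations $A_1^{1/2}=V_1P_1A^{1/2}=A^{1/2}V_1^*$ and $A_2^{1/2}=V_2P_2A_1^{1/2}=A_1^{1/2}V_2^*$ it writes $A_2^{1/2}=A^{1/2}V$ with $V$ the composed isometry, and then defines $P_{12}:=P_{\ran V}=VV^*$ explicitly, so the projection is constructed rather than deduced. You instead propagate the norm identity \eqref{restr} through the two compressions, i.e. $\|A_2^{-1/2}h\|=\|A_1^{-1/2}h\|=\|A^{-1/2}h\|$ for $h\in\ran A_2^{1/2}$, record the inclusions $\ran A_2^{1/2}\subset\ran A_1^{1/2}\subset\ran A^{1/2}$, and then invoke part (2) of Proposition~\ref{shm01} as a black box (the paper states that part but defers its proof to \cite{ArlKov_2013}). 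The subsequent verification of \eqref{dvoi2} is the same in both arguments: $\ker A_2=\{0\}$ gives $\ran(I-P_{12})\cap\ran A^{1/2}=\{0\}$, and $\ran A_2^{1/2}\cap\ran A\subset\ran A_1^{1/2}\cap\ran A=\{0\}$ gives the other equality, both via Proposition~\ref{shm01}(1)(a). What each buys: your route is shorter and more modular, and your check that $\ran A_1\ne\cH$ (needed to re-apply Proposition~\ref{shm01} with $A_1$ in place of $A$) is a point the paper glosses over; the paper's route is self-contained modulo part (1) and produces $P_{12}$ explicitly, which is what later arguments (e.g. the chain construction in Theorem~\ref{monot}) exploit.

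One peripheral inaccuracy: in your closing parenthesis, density of $\ran A$ follows from $\ker A=\{0\}$ (i.e. $A\in\bB_0^+(\cH)$), not from $\ran A\ne\cH$; the latter hypothesis only prevents \eqref{dvoi} from being vacuous. This does not affect the body of your proof.
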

\begin{proof}
We have $A^{1/2}_1=V_1P_1 A^{1/2}$, $A^{1/2}_2=V_2P_2 A^{1/2}_1,$ where $V_1:\ran P_1\to\cH,$ $V_2:\ran P_2\to\cH$
are isometries. Then
\[
A^{1/2}_1= A^{1/2}V^*_1,\;A^{1/2}_2=
A^{1/2}_1V^*_2,\;V^*_k:\cH\to\ran P_k,\; k=1,2.
\]
It follows that
\[
A^{1/2}_2=A^{1/2}V^*_2V^*_1.
\]
The operator $V:=V^*_2V^*_1$ is isometry,
$$\ran V=V^*_2 \ \{\ran V^*_1\}=V^*_2 \ \{\ran P_1\} \subset \ran P_2.$$
Let $P_{12}:=P_{\ran V} $ be orthogonal projection on $\ran V$. Then
\[
A_2=A^{1/2}VV^*A^{1/2}=A^{1/2}P_{12}A^{1/2}.
\]
Using \eqref{dvoi} and Proposition \ref{shm01}, we obtain that $\ker
A_2=\{0\}$, $\ran A^{1/2}_2\cap\ran A_1=\{0\}$, and $\ran
A^{1/2}_1\cap\ran A=\{0\}$. Due to inclusion $\ran
A^{1/2}_2\subset\ran A^{1/2}_1$, one gets $\ran A^{1/2}_2\cap\ran
A=\{0\}$. Then application of Proposition \ref{shm01} leads to
equalities \eqref{dvoi2}.
\end{proof}

\begin{proposition} \label{mars2}
Let $A\in\bB_{0}^+(\cH)$, $\ran A\ne \cH$ and
let $P_1$ and $P_2$ be two orthogonal projections in $\cH$ such that $P_1\le P_2$ and \eqref{dvoi} holds.
Define
\[
A_1:=A^{1/2}P_1A^{1/2}, \  A_2:=A^{1/2}P_2A^{1/2}.
\]
Then
\[
A_1=A^{1/2}_2PA^{1/2}_2,
\]
where $P$ is an orthogonal projection such that
\[
\ran (I-P)\cap\ran A^{1/2}_2=\{0\}.
\]
\end{proposition}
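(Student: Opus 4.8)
The plan is to reduce everything to Proposition \ref{shm01}, applied not to $A$ itself but to the operator $A_2$, with $A_1$ playing the role of the perturbed operator ``$A_1$'' there.

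First I would record the two structural facts that are needed. Setting $\sM_k:=\ran P_k$, the identity $\ran A_k^{1/2}=A^{1/2}\sM_k$ follows from \eqref{usef} (equivalently, from the Douglas Theorem \ref{Douglas}, exactly as in the proof of Proposition \ref{shm01}); since $P_1\le P_2$ forces $\sM_1\subset\sM_2$, this gives the inclusion $\ran A_1^{1/2}\subset\ran A_2^{1/2}$. Next, applying Proposition \ref{shm01}(1)(a) to the pairs $\langle A,\sM_1\rangle$ and $\langle A,\sM_2\rangle$ and invoking the hypothesis \eqref{dvoi}, I obtain $\ker A_1=\ker A_2=\{0\}$; in particular $\ran A_2^{1/2}$ is dense in $\cH$, and since $\ran A_2^{1/2}\subset\ran A^{1/2}\ne\cH$ it is non-closed, so $A_2\in\bB_0^+(\cH)$ with $\ran A_2\ne\cH$. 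Hence Proposition \ref{shm01} may legitimately be re-used with $A_2$ in place of $A$.

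The heart of the argument is a comparison of graph norms. Because $\ker A_1=\{0\}$ and $\ker A_2=\{0\}$, Proposition \ref{shm01}(1)(b) yields $\|A_1^{-1/2}h\|=\|A^{-1/2}h\|$ for all $h\in\ran A_1^{1/2}$ and $\|A_2^{-1/2}h\|=\|A^{-1/2}h\|$ for all $h\in\ran A_2^{1/2}$. Since every $h\in\ran A_1^{1/2}$ lies in $\ran A_2^{1/2}$ by the first step, the two identities combine into
\[
\|A_1^{-1/2}h\|=\|A_2^{-1/2}h\|,\qquad h\in\ran A_1^{1/2},
\]
which is precisely condition \eqref{restr} written for the pair $\langle A_2, A_1\rangle$. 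Now Proposition \ref{shm01}(2), applied with $A_2$ in the role of $A$, produces an orthogonal projection $P$ in $\cH$ with $A_1=A_2^{1/2}PA_2^{1/2}$. Finally, to check $\ran(I-P)\cap\ran A_2^{1/2}=\{0\}$: from $\ker A_1=\{0\}$ one gets $\ker(PA_2^{1/2})=\{0\}$, so if $g=A_2^{1/2}f\in\ran(I-P)$ then $PA_2^{1/2}f=Pg=0$, forcing $f=0$ and hence $g=0$ (this is also Proposition \ref{shm01}(1)(a) read for the pair $\langle A_2,\ran P\rangle$, which is licit by the first step).

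I do not anticipate a genuine obstacle here: the proof is essentially bookkeeping, transporting the standing hypotheses of Proposition \ref{shm01} from $A$ to the intermediate operator $A_2$. The only point that demands a little care is verifying that $A_2$ still meets those standing assumptions, namely $\ker A_2=\{0\}$ and $\ran A_2$ non-closed with $\ran A_2\ne\cH$; this is handled in the first step, and once it is in place the chain of implications is automatic.
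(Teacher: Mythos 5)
Your argument is correct, but it runs along a different track than the paper's own proof, so a comparison is worth recording. The paper proves Proposition \ref{mars2} directly: it writes $A_k^{1/2}=V_kP_kA^{1/2}$ with isometries $V_k:\ran P_k\to\cH$ (the same device as in the proof of Proposition \ref{shm01}(1)), composes them into the isometry $V:=V_2V_1^*$, and reads off $A_1=A_1^{1/2}\bigl(A_1^{1/2}\bigr)^*=A_2^{1/2}VV^*A_2^{1/2}$, so that $P=P_{\ran V}$ is exhibited concretely as the projection onto $V_2\{\ran P_1\}$; the final claim $\ran(I-P)\cap\ran A_2^{1/2}=\{0\}$ then follows from $\ker A_1=\{0\}$ exactly as in your last step. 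You instead pivot on the abstract converse criterion, Proposition \ref{shm01}(2), and your verification of its hypotheses is sound: $\ran A_1^{1/2}=A^{1/2}\sM_1\subset A^{1/2}\sM_2=\ran A_2^{1/2}$ via \eqref{usef}/Douglas, the kernel conditions $\ker A_1=\ker A_2=\{0\}$ from \eqref{dvoi}, and the transported identity \eqref{restr} for the pair $\langle A_2,A_1\rangle$ obtained by chaining the two identities of Proposition \ref{shm01}(1)(b) through $A$; your care in checking that $A_2\in\bB_{0}^+(\cH)$ with $\ran A_2\ne\cH$ (so that Proposition \ref{shm01} may be reapplied, and so that $A_2^{-1/2}$ in \eqref{restr} is meaningful) is exactly the point that needed attention. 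What the paper's route buys is self-containedness and explicitness: Proposition \ref{shm01}(2) is only cited to \cite{ArlKov_2013}, whereas the isometry computation is two lines and identifies $P$ geometrically; it also uses slightly less of \eqref{dvoi}, since only $\ker A_1=\{0\}$ (i.e. $\ran(I-P_1)\cap\ran A^{1/2}=\{0\}$) is needed there, while your reduction also consumes $\ker A_2=\{0\}$. What your route buys is economy of ideas: no new construction, the proposition being exposed as bookkeeping on top of Proposition \ref{shm01}.
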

\begin{proof}
It is clear that
\[
A^{1/2}_1=V_1P_1A^{1/2},\; A^{1/2}_2=V_2P_2A^{1/2},
\]
where the operator $V_k:\ran P_k\to \cH$ is isometry, $k=1,2$. Then
\[
A^{1/2}_1=A^{1/2}_2V_2V^*_1
\]
and $V:=V_2V^*_1$ is isometry with $\ran V=V_2 \{\ran P_1\}$. Hence
\[
A_1=A^{1/2}_2PA^{1/2}_2,
\]
where $P := P_{\ran V}$ is orthogonal projection on $\ran V$. Since $\ker A_1=\{0\}$, we obtain the last statement,
$\ran (I-P)\cap\ran A^{1/2}_2=\{0\}$, of the theorem.
\end{proof}

\begin{proposition}
\label{shm1} Let $\cH$ be a Hilbert space.

1) Suppose that
\begin{equation}
\label{cond1}
\begin{array}{l}
 F,\; G\in\bB^+(\cH),\; \ker F=\ker G=\{0\},\\
 F:G=0(\iff\;\ran F^{1/2}\cap\ran G^{1/2}=\{0\}).
\end{array}
\end{equation}
Then there is a subspace $\sM$ in $\cH$ satisfying
\begin{equation}
\label{zeroint}
\sM\cap\ran(F+G)^{1/2}= \sM^\perp\cap\ran(F+G)^{1/2}=\{0\}.
\end{equation}
2) If a subspace $\sM$ in $\cH$ is such that $\dim \sM=\dim\sM^\perp=\infty$, then there is a pair of linear operators
$F$ and $G$ satisfying \eqref{cond1} and \eqref{zeroint} .
\end{proposition}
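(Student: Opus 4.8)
For part (1), the plan is to use the decomposition \eqref{fg2}: writing $T := F+G$ (which lies in $\bB^+_0(\cH)$ since $\ker F = \ker G = \{0\}$ forces $\ker T = \{0\}$), we have $F = T^{1/2}MT^{1/2}$ and $G = T^{1/2}(I-M)T^{1/2}$ with $M$ a non-negative contraction with $\ran M \subset \cran T = \cH$. By Proposition \ref{root}(2), the hypothesis $F:G = 0$ is equivalent to $M$ being an orthogonal projection $P_\sM$ onto some subspace $\sM := \ran M$. Thus $F = T^{1/2}P_\sM T^{1/2}$ is exactly a lifting of the form \eqref{lift-Intr} relative to $T$. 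Now I would invoke the range identities derived from the Douglas theorem: by \eqref{usef}, $\ran F^{1/2} = \ran(T^{1/2}P_\sM T^{1/2})^{1/2} = T^{1/2}\ran P_\sM = T^{1/2}\sM$, and likewise $\ran G^{1/2} = T^{1/2}\sM^\perp$. The point is then that $\sM \cap \ran T^{1/2} = \{0\}$: if $x \in \sM \cap \ran T^{1/2}$, write $x = T^{1/2}y$; then $x = P_\sM x = P_\sM T^{1/2} y$, so $x \in T^{1/2}\sM = \ran F^{1/2}$, and also $x \in \sM$, so $x = T^{1/2}x'$... more carefully: I want to show $x \in \ran F^{1/2}\cap\ran G^{1/2} = \{0\}$. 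Since $x\in\sM$, $T^{1/2}$ applied to... Let me instead use Proposition \ref{shm01}(1a) directly with $A := T$: the operator $A_1 := T^{1/2}P_\sM T^{1/2} = F$ has $\ker A_1 = \ker F = \{0\}$, which by that proposition is equivalent to $\sM^\perp \cap \ran T^{1/2} = \{0\}$; and $\ran A_1^{1/2}\cap\ran A = \ran F^{1/2}\cap\ran T = \{0\}$ (this needs checking, see below) is equivalent to $\sM\cap\ran T^{1/2} = \{0\}$. This yields \eqref{zeroint}.

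The one genuine point to verify in part (1) is the claim $\ran F^{1/2}\cap \ran G^{1/2} = \{0\} \Rightarrow \sM\cap\ran T^{1/2} = \{0\}$, equivalently (via Proposition \ref{shm01}) that $\ker F = \{0\}$ together with $F:G=0$ forces $\ran F^{1/2}\cap\ran T = \{0\}$. But $\ran T = \ran(F+G) \subset \ran F^{1/2}+\ran G^{1/2}$ is too crude; instead I would argue directly: suppose $h\in\sM\cap\ran T^{1/2}$, $h = T^{1/2}g$. Then $P_{\sM^\perp}h = 0$, so $0 = P_{\sM^\perp}T^{1/2}g$, hence $G^{1/2}$-type considerations... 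The cleanest route is: $h\in\sM$ means $h = P_\sM h = P_\sM T^{1/2}g \in T^{1/2}\sM = \ran F^{1/2}$ after composing with... actually $P_\sM T^{1/2} g$ need not be in $T^{1/2}\sM$. So the safe approach is purely to cite Proposition \ref{shm01}(1)(a) with $A := F+G$, whose statement already packages the equivalences $\sM^\perp\cap\ran A^{1/2} = \{0\} \iff \ker A_1 = \{0\}$ and $\sM\cap\ran A^{1/2} = \{0\}\iff \ran A_1^{1/2}\cap\ran A = \{0\}$, and to separately observe that $\ran A_1^{1/2}\cap\ran A = \ran F^{1/2}\cap\ran(F+G) = \{0\}$: indeed if $F^{1/2}u = (F+G)w$ then applying... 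Here one uses $\ran(F+G)\subset\ran F^{1/2}\cap\ran G^{1/2}$? No — rather $\ran F = T^{1/2}\ran M^{1/2}$ type facts. I expect this small lemma — that $\ran F^{1/2}\cap\ran(F+G) = \{0\}$ under \eqref{cond1} — to be the main obstacle, and the fix is Lemma \ref{yu1}/Proposition \ref{root}: $\ran(F+G) \subset \ran(F+G)^{1/2} = \ran F^{1/2}+\ran G^{1/2}$, and an element in $\ran F^{1/2}$ that is also a sum $F^{1/2}a + G^{1/2}b$ forces $G^{1/2}b\in\ran F^{1/2}$, hence $G^{1/2}b = 0$, hence $b = 0$ by $\ker G = \{0\}$; so $\ran F^{1/2}\cap(\ran F^{1/2}+\ran G^{1/2})$ collapses appropriately — one needs to be slightly careful since the decomposition is not unique, but the intersection $\ran F^{1/2}\cap\ran G^{1/2} = \{0\}$ makes it work.

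For part (2), the plan is the converse construction. Given $\sM$ with $\dim\sM = \dim\sM^\perp = \infty$, pick any $T\in\bB^+_0(\cH)$ with $\ran T\ne\cH$ but such that both $\sM$ and $\sM^\perp$ have trivial intersection with $\ran T^{1/2}$ — such $T$ exists by Theorem \ref{SchmTh333} (the continuum of subspaces statement, which in particular produces, for the given $\sM$, via the lifting machinery an appropriate $T$), or more elementarily by a von Neumann-type argument using Corollary \ref{coro1} applied to the operator range $\ran T^{1/2}$ for a fixed model unbounded operator, choosing $\sM$ among the $U_t\cR$. Then set $F := T^{1/2}P_\sM T^{1/2}$ and $G := T^{1/2}P_{\sM^\perp}T^{1/2} = T - F$. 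By Proposition \ref{root}(2), since $M = P_\sM$ is an orthogonal projection, $F:G = T^{1/2}(P_\sM - P_\sM^2)T^{1/2} = 0$; and $\ker F = \{0\}$ because $\sM^\perp\cap\ran T^{1/2} = \{0\}$ (Proposition \ref{shm01}(1)(a)), similarly $\ker G = \{0\}$. Finally $F+G = T$ with $\ran T^{1/2}$ meeting both $\sM$ and $\sM^\perp$ trivially by the choice of $T$, which is \eqref{zeroint}. The infinite-dimensionality of $\sM$ and $\sM^\perp$ is exactly what is needed to guarantee existence of such a $T$ (a subspace meeting a dense nonclosed operator range trivially together with its complement cannot be finite-dimensional or finite-codimensional), so this hypothesis enters precisely here.
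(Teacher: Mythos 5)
Your part (1) follows the paper's route (decompose via \eqref{fg2}, identify $M=P_\sM$ through Proposition \ref{root}), but you make the second half much harder than it is and your sketch of it is flawed as stated. Since $G=(F+G)^{1/2}P_{\sM^\perp}(F+G)^{1/2}$, Proposition \ref{shm01}(1a) applied to the subspace $\sM^\perp$ gives at once $\ker G=\{0\}\iff\sM\cap\ran(F+G)^{1/2}=\{0\}$, exactly parallel to the way $\ker F=\{0\}$ gives $\sM^\perp\cap\ran(F+G)^{1/2}=\{0\}$; this symmetric use of the two kernel hypotheses is all the paper needs, and you never invoke $\ker G=\{0\}$ this way. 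Your detour through the claim $\ran F^{1/2}\cap\ran(F+G)=\{0\}$ can be repaired, but not by the argument you sketch: $\ran F^{1/2}\cap\bigl(\ran F^{1/2}+\ran G^{1/2}\bigr)$ is all of $\ran F^{1/2}$, so nothing ``collapses''. The correct version must use the specific decomposition coming from $x=(F+G)w=Fw+Gw$: then $Gw=x-Fw\in\ran F^{1/2}\cap\ran G^{1/2}=\{0\}$, hence $w=0$ by $\ker G=\{0\}$ and $x=0$.

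Part (2) has a genuine gap. The entire content of the converse is the existence, for an \emph{arbitrarily prescribed} subspace $\sM$ with $\dim\sM=\dim\sM^\perp=\infty$, of an operator $X\in\bB^+_0(\cH)$ with $\sM\cap\ran X^{1/2}=\sM^\perp\cap\ran X^{1/2}=\{0\}$; once such an $X$ is in hand, setting $F=X^{1/2}P_\sM X^{1/2}$, $G=X^{1/2}P_{\sM^\perp}X^{1/2}$ is routine and agrees with the paper. You simply assume this $X$ exists, citing Theorem \ref{SchmTh333} or Corollary \ref{coro1}, and neither delivers it: Theorem \ref{SchmTh333} runs in the opposite direction (for a \emph{given} operator it produces \emph{some} subspaces, with no control over which), and in the paper's logical order it is proved later, using part (1) of the present proposition; Corollary \ref{coro1} produces a family of dense, non-closed operator ranges $U_t\cR$, not closed subspaces, and again not the prescribed $\sM$. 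The paper closes exactly this gap by an explicit construction: by the von Neumann Theorem \ref{NeumTh2} choose $W,V\in\bB(\sM)$ non-negative with $\ker W=\ker V=\{0\}$, $\cran W=\cran V=\sM$ and $\ran W\cap\ran V=\{0\}$, put $U=V\Phi$ with $\Phi:\sM^\perp\to\sM$ unitary, and set
\begin{equation*}
X=\begin{bmatrix} W^2 & WU\cr U^*W & U^*U\end{bmatrix}
\quad\text{with respect to}\quad \cH=\sM\oplus\sM^\perp ;
\end{equation*}
then $(Xf,f)=\|Wf_1+Uf_2\|^2$ shows $\ker X=\{0\}$ and that both Kre\u{\i}n shorted operators $X_\sM$ and $X_{\sM^\perp}$ vanish, so \eqref{nol} yields the two trivial intersections (alternatively the paper cites \cite{ArlKov_2013}). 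Your closing remark that infinite dimension and codimension of $\sM$ are \emph{necessary} is correct but does not substitute for this existence argument; without it your part (2) is unproved.
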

\begin{proof}
By virtue of Proposition \ref{root} and equalities \eqref{fg2} we have
\[
F=(F+G)^{1/2}P(F+G)^{1/2},\quad  G=(F+G)^{1/2}(I-P)(F+G)^{1/2},
\]
where $P$ is an orthogonal projection in $\cH$. Put $\sM:=\ran P$. Then, since $\ker F=\ker G=\{0\}$, we obtain
\eqref{zeroint}.

Conversely, suppose that $\sM$ is a subspace in $\cH$ such that $\dim \sM=\dim\sM^\perp=\infty$.
Then by \cite{ArlKov_2013} one can find operator $X\in\bB(\cH)$, $X\ge 0$, such that $\ker X=\{0\}$ and
\begin{equation}
\label{zerint2} \sM\cap \ran X^{1/2}=\{0\},\;\sM^\perp\cap \ran
X^{1/2}=\{0\}.
\end{equation}
Now we follow the line of reasoning close to constructions in \cite[Section 5]{ArlKov_2013}. To this end note that by
Theorems \ref{NeumTh1} and \ref{NeumTh2} one can find in the subspace $\sM$ two non-negative self-adjoint operators $W$ and
$V$ from $\bB(\sM)$ possessing the properties
\[
\begin{array}{l}
\cran W=\cran V=\sM,\; \ran W\cap\ran V=\{0\},\\
0\le W\le I_\sM,\;\ker W=\{0\},\;0\le V\le I_\sM,\;\ker V=\{0\}.
\end{array}
\]
Let us replace $V$ by operator $U=V\Phi$, where $\Phi$ is a unitary operator from $\sM^\perp$ onto $\sM$ {and
taking into account $\cH=\sM \oplus \sM^\perp$, define}
\[
X: = \begin{bmatrix}
W^2&WU\cr U^*W&U^*U  \end{bmatrix}.
\]
Let us show that
\[
\ker X=\{0\},\; X_\sM=0,\; X_{\sM^\perp}=0.
\]
Set $f=\begin{bmatrix}f_1\cr f_2 \end{bmatrix},$ where $
f_1\in\sM,\; f_2\in\sM^\perp$. Then
\begin{equation}
\label{quadrform} (Xf,f)=||Wf_1+Uf_2||^2.
\end{equation}
It follows that
\[
Xf=0\iff Wf_1+Uf_2=0.
\]
Since $\ran W\cap\ran U=\{0\},$ $\ker W=\{0\}$, $\ker U=\{0\}$, we get $f_1=0,$ $f_2=0$. From \eqref{quadrform} and
relations $\cran W=\cran U=\sM$ we get the equalities
\[
\inf\limits_{\f\in \sM^\perp}(X(f-\f), f-\f)=0,\;
\inf\limits_{\psi\in \sM}(X(f-\psi), f-\psi)=0.
\]
Equality \eqref{Sh1} now implies $X_\sM=0$ and $X_{\sM^\perp}=0.$
Applying \eqref{nol} we obtain \eqref{zerint2}.

Now set
\[
F=X^{1/2}P_\sM X^{1/2},\; G=X^{1/2}(I- P_\sM) X^{1/2}.
\]
Then by construction $\ker F=\ker G=\{0\}$, $\ran F^{1/2}= X^{1/2}\sM,$ $\ran G^{1/2}=X^{1/2}\sM^\perp$.
Hence
$$\ran F^{1/2}\cap\ran G^{1/2}=\{0\},$$
therefore by Proposition \ref{root}  one obtains $F:G=0$. Since $F +
G = X$, this proves the statement 2). Hence, the proof of the
proposition is completed.
\end{proof}
\begin{corollary}
\label{VAZZN} Let $X\in \bB_{0}^+(\cH)$, and a subspace $\sM\subset \cH$. Then
\[
\sM\cap \ran X^{1/2}=\sM^\perp\cap\ran X^{1/2}=\{0\}
\]
if and only if the operator $X$ with respect to decomposition $\cH=\sM\oplus\sM^\perp$ takes the form
\begin{equation}
\label{crab1} X=\begin{bmatrix} W^2&WU\cr U^*W&U^*U  \end{bmatrix},
\end{equation}
where
\begin{equation}
\label{crab2}
\begin{array}{l} W\in\bB_{0}^+(\sM),
\;U\in\bB(\sM^\perp,\sM),\; \ker U=\{0\}, \;\ker U^*=\{0\},\\
\ran W\cap\ran U=\{0\}.
\end{array}
\end{equation}
\end{corollary}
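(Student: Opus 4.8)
The statement is an "if and only if" characterising when a non-singular $X\in\bB^+_0(\cH)$ has both $\sM$ and $\sM^\perp$ meeting $\ran X^{1/2}$ trivially, in terms of the $2\times 2$ block form of $X$ relative to $\cH=\sM\oplus\sM^\perp$. The sufficiency direction is essentially contained in the computation already performed in the proof of Proposition~\ref{shm1}, part~2): the quadratic form of a matrix of the form \eqref{crab1} is $(Xf,f)=\|Wf_1+Uf_2\|^2$, which together with $\ker W=\{0\}$, $\ker U=\{0\}$ and $\ran W\cap\ran U=\{0\}$ forces $\ker X=\{0\}$, and together with $\cran W=\sM$, $\cran U=\sM$ (the latter from $\ker U^*=\{0\}$) forces $X_\sM=0$ and $X_{\sM^\perp}=0$ via the shorted-operator formula \eqref{Sh1}; then \eqref{nol} yields $\sM\cap\ran X^{1/2}=\sM^\perp\cap\ran X^{1/2}=\{0\}$. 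So for that half I would simply invoke those lines, observing that the argument there did not use any special structure of $W$ and $U$ beyond \eqref{crab2}.

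\emph{Necessity.} Assume $\sM\cap\ran X^{1/2}=\sM^\perp\cap\ran X^{1/2}=\{0\}$. Write $X=\begin{bmatrix}X_{11}&X_{12}\\ X_{12}^*&X_{22}\end{bmatrix}$ with respect to $\cH=\sM\oplus\sM^\perp$. First I would identify the diagonal blocks with shorted operators: by the variational formula \eqref{Sh1}, $X_{\sM^\perp}=\begin{bmatrix}0&0\\0&(X_{\sM^\perp})_{22}\end{bmatrix}$, and comparing with \eqref{posshor1} one sees $(X_{\sM^\perp})_{22}=X_{22}-X_{12}^*(X_{11}^{[-1/2]}X_{12})^*(X_{11}^{[-1/2]}X_{12})\cdots$; more cleanly, by \eqref{nol} the hypotheses say exactly $X_\sM=0$ and $X_{\sM^\perp}=0$. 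Plugging $X_\sM=0=X_{\sM^\perp}$ into the Shmul'yan parametrisation \eqref{shmul1}--\eqref{shmul2}: since $X\ge 0$, write $X_{11}=:W^2$ with $W=X_{11}^{1/2}\in\bB^+(\sM)$, $X_{22}$ likewise, and $X_{12}=W\,\Gamma\,X_{22}^{1/2}$ for a contraction $\Gamma:\cran X_{22}\to\cran W$. Then \eqref{shmul2} gives $X_\sM=\begin{bmatrix}W(I-\Gamma^*\Gamma)W&0\\0&0\end{bmatrix}$ and $X_{\sM^\perp}=\begin{bmatrix}0&0\\0&X_{22}^{1/2}(I-\Gamma\Gamma^*)X_{22}^{1/2}\end{bmatrix}$. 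Vanishing of $X_\sM$ forces $W(I-\Gamma^*\Gamma)W=0$, and since $\ker X=\{0\}$ implies $\ker W=\{0\}$ (a zero of $W=X_{11}^{1/2}$ sits in $\ker X$ by positivity), we get $\Gamma^*\Gamma=I$ on $\cran X_{22}$, i.e. $\Gamma$ is an isometry; similarly $X_{\sM^\perp}=0$ with $\ker X_{22}$-considerations gives $\Gamma\Gamma^*=I$ on $\cran W$, so $\Gamma$ is unitary from $\cran X_{22}$ onto $\cran W$.

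To finish I set $U:=W\,\Gamma\,X_{22}^{1/2}$ viewed as an operator $\sM^\perp\to\sM$ (extending $\Gamma$ by zero on $\ker X_{22}$); then $X_{12}=U$, and $U^*U=X_{22}^{1/2}\Gamma^*W^2\Gamma X_{22}^{1/2}$. Here I would use $\ker X=\{0\}$ to show $\cran X_{22}=\sM^\perp$ (a vector $\psi\in\ker X_{22}$ gives $X\begin{bmatrix}0\\\psi\end{bmatrix}=\begin{bmatrix}X_{12}\psi\\0\end{bmatrix}$ and $(X\begin{bmatrix}0\\\psi\end{bmatrix},\begin{bmatrix}0\\\psi\end{bmatrix})=0$ forces $X_{12}\psi=0$, whence $\psi\in\ker X$); combined with $\ker W=\{0\}$ and $\Gamma$ unitary from $\sM^\perp$ onto $\sM$, this yields $X_{22}=X_{22}^{1/2}(\Gamma^*W^2\Gamma)X_{22}^{1/2}\cdot$-style identity — more directly, since $\Gamma$ is unitary, $\Gamma^* W^2\Gamma$ is a bijective positive operator on $\sM^\perp$ with trivial kernel, and one checks $X_{22}=U^*U$ precisely because the Shmul'yan form already encodes $X_{22}$ as the $(2,2)$ entry, so $X_{22}=(X_{22}^{1/2})^2=U^*U$ forces $X_{22}^{1/2}=|U|$. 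Finally $\ker U=\{0\}$ follows from $\ker X_{22}=\{0\}$ and $\ker\Gamma=\{0\}$ and $\ker W=\{0\}$; $\ker U^*=\{0\}$ symmetrically from $\cran W=\sM$ ($\ker W=\{0\}$) together with $\Gamma$ surjective; and $\ran W\cap\ran U=\{0\}$ is forced because a nonzero $h$ in the intersection would produce, as in \eqref{quadrform}, $f_1\in\sM$, $f_2\in\sM^\perp$ with $Wf_1=-Uf_2\ne 0$ but then $X\begin{bmatrix}f_1\\-f_2\end{bmatrix}$ has zero form value, contradicting $\ker X=\{0\}$. This gives \eqref{crab1}--\eqref{crab2}.

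\emph{Main obstacle.} The delicate point is handling the non-injective/non-dense-range behaviour of $X_{22}$ and $W$ correctly when passing through the Shmul'yan parametrisation: the contraction $\Gamma$ lives only between the closures of the ranges, so turning "$\Gamma$ isometric and co-isometric there" into genuine injectivity and dense range of the full operators $U=W\Gamma X_{22}^{1/2}$ and its adjoint requires carefully using $\ker X=\{0\}$ at each step (which, as above, separately forces $\ker W=\{0\}$, $\ker X_{22}=\{0\}$, hence $\cran W=\sM$, $\cran X_{22}=\sM^\perp$). Once those kernel/range facts are nailed down, identifying $X_{12}$ with $U$ and $X_{22}$ with $U^*U$ is a short computation from \eqref{shmul1} and the unitarity of $\Gamma$.
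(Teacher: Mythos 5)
Your overall strategy is the same as the paper's: for sufficiency you invoke the quadratic--form identity \eqref{quadrform} together with \eqref{Sh1}, \eqref{nol}, \eqref{posshor1}, and for necessity you translate the hypotheses via \eqref{nol} into $X_\sM=X_{\sM^\perp}=0$, feed this into the parametrisation \eqref{shmul1}, \eqref{shmul2}, and use $\ker X=\{0\}$ to get $\ker X_{11}=\ker X_{22}=\{0\}$ and hence that $\Gamma$ is a unitary map of $\sM^\perp$ onto $\sM$. Up to that point your argument is sound and matches the paper.

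The necessity direction, however, breaks at the final identification because of your choice of $U$. You set $U:=W\Gamma X_{22}^{1/2}$, i.e. $U:=X_{12}$. With that choice the matrix you produce has entries $X_{11}=W^2$, off-diagonal block $U$, and $(2,2)$ block $X_{22}$, which is \emph{not} of the form \eqref{crab1}: that form requires the off-diagonal block to be $WU=W^{2}\Gamma X_{22}^{1/2}\ne X_{12}$ and the $(2,2)$ block to be $U^*U=X_{22}^{1/2}\Gamma^*W^{2}\Gamma X_{22}^{1/2}$, and the latter equals $X_{22}$ only if $W^{2}=I_\sM$ on $\cran W$ --- false in general, since the interesting case is precisely $\ran W\ne\sM$. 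Consequently the sentence claiming $X_{22}=U^*U$ ``because the Shmul'yan form already encodes $X_{22}$ as the $(2,2)$ entry'', and the conclusion $X_{22}^{1/2}=|U|$, are circular and in general false; moreover $\Gamma^*W^{2}\Gamma$ need not be bijective, since $\ran W$ need not be closed. The repair is exactly the paper's choice: put $U:=\Gamma X_{22}^{1/2}$. Then $X_{12}=X_{11}^{1/2}\Gamma X_{22}^{1/2}=WU$, and since $\Gamma$ is an isometry on $\cran X_{22}=\sM^\perp$ one gets $U^*U=X_{22}^{1/2}\Gamma^*\Gamma X_{22}^{1/2}=X_{22}$, so $X$ is of the form \eqref{crab1}; the properties \eqref{crab2} then follow as you indicate ($\ker U=\ker U^*=\{0\}$ from $\ker X_{22}=\{0\}$, $\ker W=\{0\}$ and unitarity of $\Gamma$, and $\ran W\cap\ran U=\{0\}$ from \eqref{quadrform} and $\ker X=\{0\}$). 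With this correction your proof coincides with the paper's.
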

\begin{proof}
If $X$ is of the form \eqref{crab1} with conditions \eqref{crab2}
then due to \eqref{quadrform}, \eqref{nol}, \eqref{posshor1} we get
that
\[
\sM\cap \ran X^{1/2}=\sM^\perp\cap\ran X^{1/2}=\{0\},\; \ker
X=\{0\}.
\]
Conversely, suppose $X\in\bB_{0}^+(\cH)$ and $\ran
X^{1/2}\cap\sM=\ran X^{1/2}\cap\sM^\perp=\{0\}.$ From \eqref{nol}
and \eqref{shmul1},  \eqref{shmul2} we get that with respect to orthogonal
decomposition $\cH=\sM\oplus\sM^\perp$ the operator $X$ takes the form
\[
X=\begin{bmatrix}X_{11}&X^{1/2}_{11}\Gamma X^{1/2}_{22}\cr
X^{1/2}_{22}\Gamma^* X^{1/2}_{11}&X_{22}
\end{bmatrix},
\]
where $\ker X_{11}=\{0\}$, $\ker X_{22}=\{0\}$, and $\Gamma$ is
unitary map of $\sM^\perp$ onto $\sM$. Denote $W=X_{11}^{1/2}$,
$U=\Gamma X^{1/2}_{22}$. Then $X$ is of the form \eqref{crab1}.
Moreover, $\ran W\cap\ran U=\{0\}$ due to $\ker X=\{0\}$. Therefore, conditions \eqref{crab2} are satisfied.
\end{proof}
Let $U=V\Phi$ be the polar decomposition of $U\in\bB(\sM^\perp,\sM)$, where $V=(UU^*)^{1/2}$ and $\Phi$ is
unitary operator acting from $\sM^\perp$ onto $\sM$. Then $X$ in \eqref{crab1} takes the form
\begin{equation}
\label{crab44}
 X=\begin{bmatrix} W^2&WV\Phi\cr \Phi^*VW&\Phi^*V^2\Phi
\end{bmatrix}:\begin{array}{l}\sM\\\oplus\\ \sM^\perp\end{array}\to
\begin{array}{l}\sM\\\oplus\\ \sM^\perp\end{array},
\end{equation}
and
\begin{equation} \label{crab3}
\begin{array}{l} W\in\bB_{0}^+(\sM),
\;V\in\bB_{0}^+(\sM), \\
\ran W\cap\ran V=\{0\}.
\end{array}
\end{equation}

Let us formulate a \textit{general} criterion: The operator $X\in\bB_{0}^+(\cH)$, having the block-operator matrix form
\[
X=\begin{bmatrix} X_{11}&X_{12}\cr X_{12}^*&X_{22}
\end{bmatrix}:\begin{array}{l}\sM\\\oplus\\ \sM^\perp\end{array}\to
\begin{array}{l}\sM\\\oplus\\ \sM^\perp\end{array},
\]
possess the property
\[
\sM\cap\ran X^{1/2}=\sM^\perp\cap\ran X^{1/2}=\{0\} \ ,
\]
if and only if
\[
\ker X_{11}=\{0\}, \; \ker X_{22}=\{0\},\; \ran X_{12}\cap\ran
X_{11}=\{0\}.
\]

{Now we are in position to formulate the first of our main results of this section. It concerns subspaces that have
trivial intersections with the operator range $\ran A^{1/2}$.}
\begin{theorem}
\label{SchmTh333} Let $A\in\bB_{0}^+(\cH)$ and $\ran A\ne \cH$.
{Then there is a continuum set of subspaces $\sM\subset \cH$ such
that}
\begin{equation}
\label{zeroint111} \sM\cap \ran A^{1/2}=\sM^\perp\cap\ran A^{1/2}=\{0\}.
\end{equation}
\end{theorem}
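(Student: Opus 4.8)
The plan is to deduce the statement from Proposition~\ref{shm1}(1), applied \emph{not} to $A$ itself (which would be circular) but to the pair $F:=A$, $G:=UAU^{*}$ with $U$ a suitable unitary, and then to produce a continuum of subspaces by letting the unitaries $e^{isA}$ act on one such subspace.

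First I would record that $\cR:=\ran A^{1/2}$ is a non-closed, dense operator range: density holds since $\cran A^{1/2}=(\ker A^{1/2})^{\perp}=\cH$, and if $\cR$ were closed then $A^{1/2}$, being injective and (being also dense) surjective, would be boundedly invertible, whence $\ran A=\cH$ against the hypothesis. Hence the von Neumann Theorem~\ref{NeumTh2} supplies a unitary $U$ on $\cH$ with $U\cR\cap\cR=\{0\}$. Now put $F:=A$ and $G:=UAU^{*}$. Both lie in $\bB^{+}(\cH)$ and have trivial kernels; moreover $G^{1/2}=UA^{1/2}U^{*}$, so $\ran G^{1/2}=U\cR$ and therefore $\ran F^{1/2}\cap\ran G^{1/2}=\cR\cap U\cR=\{0\}$, i.e.\ $F:G=0$ by Proposition~\ref{root}(2). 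Thus hypothesis \eqref{cond1} is met, and Proposition~\ref{shm1}(1) yields a subspace $\sM$ with
\[
\sM\cap\ran(F+G)^{1/2}=\sM^{\perp}\cap\ran(F+G)^{1/2}=\{0\}.
\]
By \eqref{doug11}, $\ran(F+G)^{1/2}=\ran F^{1/2}+\ran G^{1/2}\supseteq\ran A^{1/2}$, so the two trivial intersections are inherited by the sub-manifold $\ran A^{1/2}$; this is exactly \eqref{zeroint111} for this $\sM$.

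The point to be careful about --- which I regard as the main obstacle --- is precisely why one passes to $F+G=A+UAU^{*}$ rather than $F+G=A$. If one insisted on $F+G=A$ together with $\ker F=\ker G=\{0\}$ and $F:G=0$, then by \eqref{fg2} and Proposition~\ref{root}(2) one would necessarily have $F=A^{1/2}PA^{1/2}$, $G=A^{1/2}(I-P)A^{1/2}$ with $P$ an orthogonal projection satisfying $\ran P\cap\ran A^{1/2}=\ran(I-P)\cap\ran A^{1/2}=\{0\}$ --- i.e.\ $\sM=\ran P$ would \emph{already} be the object sought, so the reasoning would be circular. Enlarging $F+G$ so that $\ran(F+G)^{1/2}$ merely \emph{contains} $\ran A^{1/2}$ breaks the circle, since ``trivial intersection with $\sM$ and with $\sM^{\perp}$'' passes to every sub-manifold; von Neumann's theorem is exactly what makes such an enlargement available.

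Finally, to upgrade to a continuum of subspaces I would let the group $\{e^{isA}\}_{s\in\dR}$ act on the single $\sM$ just produced. Since $A$ is bounded and self-adjoint, each $e^{isA}$ commutes with $A^{1/2}$, so $e^{isA}\cR=\cR$, and hence
\[
e^{isA}\sM\cap\cR=e^{isA}(\sM\cap\cR)=\{0\},\qquad (e^{isA}\sM)^{\perp}\cap\cR=e^{isA}(\sM^{\perp}\cap\cR)=\{0\},
\]
so every $e^{isA}\sM$ satisfies \eqref{zeroint111}. Note $\sM\ne\{0\}$, for otherwise $\sM^{\perp}\cap\cR=\cR\ne\{0\}$. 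Now $H:=\{s\in\dR:\ e^{isA}\sM=\sM\}$ is a closed subgroup of $\dR$; if $H=\dR$ then $\sM$ would reduce $A$, so $A|_{\sM}$ would be an injective self-adjoint operator on $\sM$, giving $\overline{A\sM}=\sM$, while $A\sM\subseteq\ran A^{1/2}\cap\sM=\{0\}$ (using $\ran A\subseteq\ran A^{1/2}$ and that $\sM$ reduces $A$) forces $\sM=\{0\}$ --- a contradiction. Hence $H\subsetneq\dR$, so the family $\{e^{isA}\sM\}_{s\in\dR}$ is parametrised by $\dR/H$ (either $\dR$ itself or a circle) and therefore has the cardinality of the continuum, completing the proof.
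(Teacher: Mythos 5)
Your proof is correct, and its second half takes a genuinely different route from the paper. The first half is essentially the paper's argument: both you and the authors invoke the von Neumann theorem (Theorem \ref{NeumTh2}) to produce $B=UAU^{*}$ with $\ran B^{1/2}\cap\ran A^{1/2}=\{0\}$, apply Proposition \ref{root} and Proposition \ref{shm1} to the pair $\langle A, B\rangle$, and use \eqref{doug11} to pass from $\ran(A+B)^{1/2}$ down to $\ran A^{1/2}$ (your remark on why the enlargement $A+UAU^{*}$ is what breaks the circularity is accurate, if not strictly needed). Where you diverge is in producing the continuum. The paper keeps working with the liftings $A=(A+xB)^{1/2}P(x)(A+xB)^{1/2}$, $x>0$, and shows $P(x)\ne P(y)$ for $x\ne y$ by a fairly long computation with the contraction $Z_{x,y}=(A+xB)^{1/2}(A+yB)^{-1/2}$, ending in the contradiction $P_{\sM}^{\perp}A\uphar\sM^{\perp}=0$. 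You instead take the orbit $\{e^{isA}\sM\}_{s\in\dR}$ of the single subspace already constructed: since $e^{isA}$ commutes with $A^{1/2}$ it preserves $\cR=\ran A^{1/2}$, so every $e^{isA}\sM$ inherits \eqref{zeroint111}; the stabiliser $H=\{s: e^{isA}\sM=\sM\}$ is a closed subgroup of $\dR$ (norm continuity of $s\mapsto e^{isA}$), and $H=\dR$ would force $P_{\sM}$ to commute with $A$, whence $A\sM\subseteq\sM\cap\ran A^{1/2}=\{0\}$ and $\sM=\overline{A\sM}=\{0\}$, a contradiction; thus $H$ is discrete and $\dR/H$ indexes continuum many distinct subspaces. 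Your argument is shorter and more elementary. What the paper's construction buys in exchange is structural information that is reused later: each $\sM_x=\ran P(x)$ arises from an explicit lifting with respect to $A+xB$, the map $x\mapsto P(x)$ is shown to be strongly continuous, and this family feeds into Proposition \ref{11} and the chain constructions of Theorem \ref{monot}; your orbit $\{e^{isA}\sM\}$ establishes the cardinality claim but not these additional features.
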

\begin{proof} By Theorem \ref{NeumTh2} there exists
$B\in\bB_{0}^+(\cH)$ such that $\ran B^{1/2}\cap\ran A^{1/2}=\{0\}$ (take for instance $B=UAU^*$, where $U$ is
unitary in $\cH$ and satisfies \eqref{fw11}). Then the parallel sum $A:B=0$, and hence by Theorem \ref{shm1} there is
a subspace $\sM$ such that
$$\sM\cap\ran (A+B)^{1/2}=\sM^\perp\cap\ran (A+B)^{1/2}=\{0\}.$$
Since $\ran (A+B)^{1/2}=\ran A^{1/2}+\ran B^{1/2},$ we get
\eqref{zeroint111}. Notice that
\[
A=(A+B)^{1/2}P_\sM(A+B)^{1/2},
\;B=(A+B)^{1/2}P_{\sM^\perp}(A+B)^{1/2}.
\]
Let $M\in\bB^+(\cH)$. Then
$$\ran(B^{1/2}MB^{1/2})^{1/2}=B^{1/2}\ran
M^{1/2}\subseteq\ran B^{1/2}.$$
Since $\ran B^{1/2}\cap \ran
A^{1/2}=\{0\}$, then
$$\ran (B^{1/2}MB^{1/2})^{1/2}\cap\ran A^{1/2}=\{0\} \ .$$
Hence
\[
\begin{array}{l}
A=(A+B^{1/2}MB^{1/2})^{1/2}P(M)(A+B^{1/2}MB^{1/2})^{1/2},\\
B^{1/2}MB^{1/2}=(A+B^{1/2}MB^{1/2})^{1/2}(I-P(M))(A+B^{1/2}MB^{1/2})^{1/2},
\end{array}
\]
where $P(M)$ is orthogonal projection in $\cH$. In particular
\begin{equation}
\label{potx} A=(A+xB)^{1/2}P(x)(A+xB)^{1/2}, \;xB=(A+xB)^{1/2}(I-P(x))(A+xB)^{1/2} \ ,
\end{equation}
for proportional to identity operators $M(x) = x I$ with positive
parameter $x$. Here we put $P(x):=P(M(x))$. Then
\[
\ran P(x)\cap\ran A^{1/2}=\ran(I-P(x))\cap\ran A^{1/2}=\{0\}.
\]
We show first that if $x,y>0$ and
$x\ne y$, then $P(x)\ne P(y)$. Notice that
\begin{equation}
\label{dosnt}
\ran (A+x B)^{1/2}=\ran A^{1/2}\dot+\ran B^{1/2}, \; x>0.
\end{equation}
Suppose that $x<y$. Then
\[
A+x B\le A+y B.
\]
Hence, by the Douglas Theorem \ref{Douglas} one obtains
\begin{equation}
\label{operzed} (A+x B)^{1/2}=Z_{x,y}(A+y B)^{1/2},
\end{equation}
where $Z_{x,y}\in\bB(\cH)$ is a contraction. Note that
\[
Z^*_{x,y}= (A+y B)^{-1/2}(A+x B)^{1/2}.
\]
Then equality \eqref{dosnt} implies that the operators $Z^*_{x,y}$
as well as $Z_{x,y}$ are isomorphisms of $\cH$. The first equality
in \eqref{potx} yields that
\[
A=(A+yB)^{1/2}Z^*_{x,y}P(x)Z_{x,y}(A+yB)^{1/2}.
\]
On the other hand
\[
A=(A+yB)^{1/2}P(y)(A+yB)^{1/2}.
\]
Thus
\begin{equation}
\label{ravpro}
P(y)=Z^*_{x,y}P(x)Z_{x,y}.
\end{equation}
Set $\sM_x=\ran P(x)$, $\sM_y=\ran P(y).$ From \eqref{ravpro} we get
that
\begin{enumerate}
\item $Z_{x,y}$ maps $\sM^\perp_y$ into $\sM^\perp_x$,
\item $Z_{x,y}$ maps $\sM_y$ into $\sM_x$ isometrically.
\end{enumerate}
In fact $Z_{x,y}\sM_y=\sM_x$ and $Z_{x,y}\sM^\perp_y=\sM^\perp_x$,
because $Z_{x,y}$ is isomorphism of $\cH$. The equalities
\[
\begin{array}{l}
xB=(A+y B)^{1/2}Z^*_{x,y}(I-P(x))Z_{x,y}(A+y B)^{1/2},\\
yB=(A+y B)^{1/2}(I-P(y))(A+y B)^{1/2}
\end{array}
\]
lead to
\[
y\,Z^*_{x,y}(I-P(x))Z_{x,y}=x\,(I-P(y)),
\]
and taking into account \eqref{ravpro} we arrive at
\[
(y-x)P(y)= y Z^*_{x,y}Z_{x,y}- xI.
\]
Finally
\[
y||Z_{x,y} h||^2={x}||h||^2,\; h\in\sM^\perp_y.
\]
This equality means that the operator
\[
\sqrt{y x^{-1}}\,Z_{x,y}
\]
isometrically maps $\sM^\perp_y$ onto $\sM^\perp_x$. Now assume
$P(x)=P(y)$, i.e., $\sM_x=\sM_y.$ Denote this subspace by $\sM.$
Then with respect to decomposition $\cH=\sM\oplus\sM^\perp$ the operator $Z_{x,y}$ takes the matrix form
\[
Z_{x,y}=\begin{bmatrix}\Lambda_1&0\cr 0&\sqrt{xy^{-1}}\,\Lambda_2
 \end{bmatrix}:\begin{array}{l}\sM\\
 \oplus\\
 \sM^\perp\end{array}\to \begin{array}{l}\sM\\
 \oplus\\
 \sM^\perp\end{array},
\]
where $\Lambda_1$ and $\Lambda_2$ are unitary operators in $\sM$ and
$\sM^\perp$, respectively.
 Since
\[
A=(A+yB)^{1/2}P_\sM(A+yB)^{1/2}=(A+xB)^{1/2}P_\sM(A+yB)^{1/2},
\]
from \eqref{operzed} follows
\[
A=Z_{x,y}(A+yB)^{1/2}P_\sM(A+yB)^{1/2}Z^*_{x,y}.
\]
Therefore
\[
A=Z^*_{x,y}AZ_{x,y}.
\]
Due to the structure of $Z_{x,y}$, the latter equality implies
\[
P_\sM^\perp A\uphar\sM^\perp =Z^*_{x,y}(P_\sM^\perp
A\uphar\sM^\perp)Z_{x,y}\uphar\sM^\perp.
\]
Hence
\[
\|P_\sM^\perp A\uphar\sM^\perp\|=xy^{-1}\|\Lambda^{-1}_2(
P_\sM^\perp A\uphar\sM^\perp)\Lambda_2\|.
\]
Because $\Lambda_2$ is unitary in $\sM^\perp$, we get
\[
\|\Lambda^{-1}_2( P_\sM^\perp
A\uphar\sM^\perp)\Lambda_2\|=\|P_\sM^\perp A\uphar\sM^\perp\|.
\]
Thus,
\[
\|P_\sM^\perp A\uphar\sM^\perp\|=xy^{-1}\|P_\sM^\perp
A\uphar\sM^\perp\|.
\]
Since $x\ne y$, this equality implies: $P_\sM^\perp A \, \uphar \, \sM^\perp=0$, i.e. the contradiction
with $A\in\bB^+_0(\cH)$. So, $P(x)\ne P(y)$ if $x\ne y$.
\end{proof}
{It should be noted that the function $P(x)$ is \textit{strongly} continuous at each point on $(0,+\infty)$. To prove this
we define the following auxiliary operator-valued function
\[
S_x: =(A+xB)^{-1/2}A^{1/2}\ , \  \  x > 0 \ .
\]
Then
\[
S^*_x h=A^{1/2}(A+xB)^{-1/2}h, \ \  h\in\ran A^{1/2}\dot+\ran B^{1/2} \ ,
\]
and from \eqref{potx} one gets
\begin{equation}
\label{shift} S_xS^*_x=P(x).
\end{equation}
Let $x_0>0$, then
\begin{multline*}
\left(S^*_x-S^*_{x_0}\right)(A+x_0B)f =(x_0-x)A^{1/2}(A+x
B)^{-1/2}Bf\\
+A^{1/2}\left((A+x B)^{1/2}-(A+x_0B)^{1/2}\right)f.
\end{multline*}
Note that the Douglas Theorem \ref{Douglas} implies: $\sqrt{x}||(A+xB)^{-1/2}B^{1/2}||\le 1$. Therefore,
\[
||(A+x B)^{-1/2}B^{1/2}||\le C
\]
for all $x$ in some neighborhood of the point $x_0$. Hence,
\[
\lim\limits_{x\to x_0}\left(S^*_x-S^*_{x_0}\right)(A+x_0B)f=0
\]
for all $f\in\cH$. Since the linear manifold $\ran (A+x_0B)$ is
dense in $\cH$ and $||S^*_x||=1$, we obtain
\[
\lim\limits_{x\to x_0}S^*_xg=S^*_{x_0}g
\]
for all $g\in\cH$. From \eqref{shift} it follows that
$||S_x^*g||=||P(x)g||$, then
\begin{equation}
\label{norma} \lim\limits_{x\to x_0}||P(x)g||=||P(x_0)g||,\;
g\in\cH.
\end{equation}
On the other hand,
\[
(P(x)g-P(x_0)g,f)=(S_xS^*_x g,f)-(S_{x_0}S^*_{x_0}g,f) =(S^*_xg,
S^*_x f)-(S^*_{x_0}g, S^*_{x_0} f).
\]
Hence, the function $P(x)$ is weakly continuous at $x_0$, which together with \eqref{norma} implies that $P(x)$
is strongly continuous at $x_0$. }
\begin{proposition}\label{11}
Let $\{U_t\}_{t\in \mathbb{R}}$ be a one-parameter unitary group such that
$$U_t \, \ran A^{1/2} \cap U_s \, \ran A^{1/2}=\{0\},\; s\ne t,$$
see Corollary \ref{coro1}, then there is a one-parameter family of
subspaces $\{\sM_t\}_{t\in \mathbb{R}\setminus\{0\}}$ such that
\[
\sM_t\cap\ran A^{1/2}=\sM^\perp_t\cap\ran A^{1/2}=\{0\} \ \ \mbox{for all}\; t\ne 0 \ .
\]
Moreover,
\begin{equation}
\label{pmt} P_{\sM^\perp_{-t}}=U_{-t}P_{\sM_t}U_t, \; t\ne 0,
\end{equation}
and therefore
\[
\sM^\perp_{-t}=U_{-t}\sM_t,\; t\ne 0.
\]
\end{proposition}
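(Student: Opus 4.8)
The plan is to manufacture, for each $t\neq 0$, a companion operator whose square root has range transversal to $\ran A^{1/2}$, then to feed the pair into Proposition~\ref{shm1} and, for the ``moreover'' part, to exploit the uniqueness of the coefficient operator in the parallel-sum decomposition \eqref{fg2}.

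First I would set, for $t\neq 0$, $B_t:=U_tAU_t^*$. Since $U_t$ is unitary and $A\in\bB_0^+(\cH)$, the operator $B_t$ again lies in $\bB_0^+(\cH)$, $(U_tAU_t^*)^{1/2}=U_tA^{1/2}U_t^*$, and hence $\ran B_t^{1/2}=U_t\ran A^{1/2}$. Taking $s=0$ in the hypothesis (recall $U_0=I$) gives $\ran A^{1/2}\cap\ran B_t^{1/2}=\{0\}$, so by Proposition~\ref{root} one has $A:B_t=0$; together with $\ker A=\ker B_t=\{0\}$ this means the pair $(A,B_t)$ satisfies \eqref{cond1}. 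Proposition~\ref{shm1}(1), via its proof through \eqref{fg2}, then supplies an orthogonal projection $P_t$ with $A=(A+B_t)^{1/2}P_t(A+B_t)^{1/2}$ and $B_t=(A+B_t)^{1/2}(I-P_t)(A+B_t)^{1/2}$, and I would simply define $\sM_t:=\ran P_t$. Then $\sM_t\cap\ran(A+B_t)^{1/2}=\sM_t^\perp\cap\ran(A+B_t)^{1/2}=\{0\}$, and since $\ran(A+B_t)^{1/2}=\ran A^{1/2}+\ran B_t^{1/2}\supseteq\ran A^{1/2}$ by \eqref{doug11}, it follows that $\sM_t\cap\ran A^{1/2}=\sM_t^\perp\cap\ran A^{1/2}=\{0\}$ for every $t\neq 0$.

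For the relation \eqref{pmt} I would compare the decompositions at $t$ and at $-t$. As $\{U_t\}$ is a one-parameter group, $U_{-t}=U_t^{-1}=U_t^*$, so $B_{-t}=U_t^*AU_t$ and $A+B_{-t}=U_t^*(A+B_t)U_t$, whence $(A+B_{-t})^{1/2}=U_t^*(A+B_t)^{1/2}U_t$. Substituting this into $A=(A+B_{-t})^{1/2}P_{-t}(A+B_{-t})^{1/2}$ and conjugating by $U_t$ yields $U_tAU_t^*=(A+B_t)^{1/2}(U_tP_{-t}U_t^*)(A+B_t)^{1/2}$, while on the other hand $U_tAU_t^*=B_t=(A+B_t)^{1/2}(I-P_t)(A+B_t)^{1/2}$. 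Comparing the two and using uniqueness of the coefficient operator in \eqref{fg2} forces $U_tP_{-t}U_t^*=I-P_t$, i.e. $I-P_{-t}=U_t^*P_tU_t$, which is exactly $P_{\sM_{-t}^\perp}=U_{-t}P_{\sM_t}U_t$; taking ranges and using that $U_{-t}$ is an isomorphism of $\cH$ gives $\sM_{-t}^\perp=U_{-t}\sM_t$.

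The one point needing care is this uniqueness: if $(A+B_t)^{1/2}M(A+B_t)^{1/2}=(A+B_t)^{1/2}M'(A+B_t)^{1/2}$ then $M=M'$, because $(A+B_t)^{1/2}$ is injective with dense range. That $\ker(A+B_t)=\{0\}$ is immediate from $((A+B_t)f,f)=(Af,f)+(B_tf,f)\ge(Af,f)\ge 0$ and $\ker A=\{0\}$, so $A+B_t\in\bB_0^+(\cH)$. Everything else is routine bookkeeping with the unitarity of $U_t$ and the identity $U_{-t}=U_t^*$; notably the argument uses nothing about $\{U_t\}$ beyond its being a one-parameter unitary group with the stated transversality of ranges at distinct parameters.
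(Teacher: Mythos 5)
Your proof is correct and follows essentially the same route as the paper: setting $B_t=U_tAU_{-t}$, extracting the projection $P_t$ from the decomposition \eqref{fg2} (Proposition \ref{root}/\ref{shm1}), and obtaining \eqref{pmt} by comparing the decompositions at $t$ and $-t$ via the intertwining $(A+B_{-t})^{1/2}=U_{-t}(A+B_t)^{1/2}U_t$ and the uniqueness of the middle operator, which the paper uses implicitly and you rightly make explicit.
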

\begin{proof}
Consider $B_t=U_tAU_{-t}$. Note that by the Stone theorem $U_t$ is
of the form $U_t=\exp(it H)$, $t\in\dR$, where generator $H$ is a
self-adjoint operator in $\cH$. Since
$$\ran B^{1/2}_t\cap\ran
A^{1/2}=\{0\},\; t\ne 0,$$
we get
\[
\begin{array}{l}
A=(A+B_t)^{1/2}P_t (A+B_t)^{1/2},\\
B_t=(A+B_t)^{1/2}(I-P_t)(A+B_t)^{1/2},\; t\ne 0,
\end{array}
\]
where $P_t := P_{\sM_t}$ are orthogonal projections on $\sM_t \subset \cH$ for all $t\ne 0$.
Then
\begin{equation}
\label{pt} A= U_{-t}(A+B_t)^{1/2}(I-P_t)(A+B_t)^{1/2}U_{t}.
\end{equation}
By virtue of equality
\[
U_{-t}(A+B_t)=(B_{-t}+A)U_{-t}
\]
we get
\[
U_{-t}(A+B_t)^k=(B_{-t}+A)^kU_{-t}
\]
for all $k\in\dN$. Hence
\[
U_{-t}(A+B_t)^{1/2}=(B_{-t}+A)^{1/2}U_{-t},\; t\ne 0.
\]
Then \eqref{pt} yields
\[
A=(B_{-t}+A)^{1/2}U_{-t}(I-P_t)U_t(B_{-t}+A)^{1/2}, \; t\ne 0.
\]
Since also
\[
A=(A+B_{-t})^{1/2}P_{-t} (A+B_{-t})^{1/2},\; t\ne 0,
\]
we obtain  \eqref{pmt} with $\sM_t:=\ran P_t$.
\end{proof}
{Next we show that there exists \textit{increasing} (\textit{decreasing}) chains of subspaces possessing the trivial
intersection property \eqref{zeroint111}.}
\begin{theorem}
\label{monot} Let $A\in\bB_{0}^+(\cH)$ and $\ran A\ne \cH$.
Then there is an increasing sequence $\sN_1\subset\sN_2\subset\ldots$ of subspaces in $\cH$ such that
\begin{enumerate}
\item $\sN_k\cap\ran A^{1/2}=\sN^\perp_k\cap \ran A^{1/2}=\{0\}$ for all
$k\in\dN,$
\item $\bigcap\limits_{k\in\dN}\sN^\perp_k=\{0\},$
\item $s-\lim\limits_{k\to\infty} P_{\sN_k}=I_\cH.$
\end{enumerate}
\end{theorem}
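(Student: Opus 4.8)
The plan is to pin down one subspace enjoying \eqref{zeroint111} and then to enlarge it \emph{inside its orthogonal complement}, checking at every stage that the property is preserved, in such a way that the enlargements eventually fill up $\cH$.

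First I would invoke Theorem \ref{SchmTh333} to get a subspace $\sM\subset\cH$ with $\sM\cap\ran A^{1/2}=\sM^\perp\cap\ran A^{1/2}=\{0\}$; here necessarily $\dim\sM=\dim\sM^\perp=\infty$, since a subspace of finite dimension or finite codimension cannot avoid the dense, non-closed manifold $\ran A^{1/2}$. Then, applying Corollary \ref{VAZZN} to $A$ and $\sM$, with respect to $\cH=\sM\oplus\sM^\perp$ one has
\[
A=\begin{bmatrix}W^2&WU\\U^*W&U^*U\end{bmatrix}=\begin{bmatrix}W\\U^*\end{bmatrix}\begin{bmatrix}W&U\end{bmatrix}=DD^*,\qquad D:=\begin{bmatrix}W\\U^*\end{bmatrix},
\]
with $W\in\bB_0^+(\sM)$, $U\in\bB(\sM^\perp,\sM)$, $\ker U=\ker U^*=\{0\}$, $\ran W\cap\ran U=\{0\}$, so that (Theorem \ref{Douglas}) $\ran A^{1/2}=\ran(DD^*)^{1/2}=\ran D=\{\,W\xi\oplus U^*\xi:\xi\in\sM\,\}$. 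From this I read off two facts: (a) for every subspace $\cL\subseteq\sM^\perp$ one has $(\sM\oplus\cL)\cap\ran A^{1/2}=\{0\}$ \emph{iff} $\cL\cap\ran U^*=\{0\}$ (if $W\xi\oplus U^*\xi\in\sM\oplus\cL$ then $U^*\xi\in\cL\cap\ran U^*$, hence $\xi=0$); and (b) every subspace $\sN\supseteq\sM$ automatically satisfies $\sN^\perp\cap\ran A^{1/2}=\{0\}$, as $\sN^\perp\subseteq\sM^\perp$. So the theorem is reduced to building an increasing chain of subspaces of $\sM^\perp$, each meeting $\ran U^*$ only in $0$, with dense union.

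The hard part is the following step: $\ran U^*$ is a \emph{non-closed}, dense operator range in $\sM^\perp$. It is dense ($\cran U^*=(\ker U)^\perp=\sM^\perp$) and it is an operator range ($\ran U^*=\ran(U^*U)^{1/2}$, $(U^*U)^{1/2}\in\bB(\sM^\perp)$). Were it closed, it would equal $\sM^\perp$, hence $U^*$ bounded below, hence $D$ bounded below (as $\|D\xi\|^2\ge\|U^*\xi\|^2$), hence $\ran A^{1/2}=\ran D$ closed and, being dense, all of $\cH$; then $A^{1/2}$ and so $A$ would be surjective, contradicting $\ran A\ne\cH$. Since a proper dense operator range in a Hilbert space has infinite algebraic codimension (\cite{FW}; equivalently: if $\ran U^*+F=\sM^\perp$ with $\dim F<\infty$, an open mapping argument forces $\ran U^*$ closed), it follows that $\ran U^*+\cL$ is again a proper dense subspace of $\sM^\perp$ for every finite-dimensional $\cL\subset\sM^\perp$.

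The construction is then routine. Fix a dense sequence $\{w_m\}_{m\in\dN}$ in $\sM^\perp$ and a map $j\mapsto m(j)$ of $\dN$ onto $\dN$ taking each value infinitely often. Set $\cL_0=\{0\}$; given a $k$-dimensional $\cL_k\subset\sM^\perp$ with $\cL_k\cap\ran U^*=\{0\}$, note $\ran U^*+\cL_k$ is proper and dense in $\sM^\perp$, so its complement is dense; choose $v_{k+1}\in\sM^\perp\setminus(\ran U^*+\cL_k)$ with $\|v_{k+1}-w_{m(k+1)}\|<1/(k+1)$ and put $\cL_{k+1}:=\cL_k\dot+\span\{v_{k+1}\}$ (a genuine direct sum, as $v_{k+1}\notin\cL_k$); since $v_{k+1}\notin\ran U^*+\cL_k$ one gets at once $\cL_{k+1}\cap\ran U^*=\{0\}$. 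By construction $\bigcup_k\cL_k$ approximates each $w_m$ arbitrarily well, so $\overline{\bigcup_k\cL_k}=\sM^\perp$. Finally let $\sN_k:=\sM\oplus\cL_k$. Then $\sN_1\subset\sN_2\subset\cdots$; property (1) holds by (a),(b); $\bigcap_k\sN_k^\perp=\sM^\perp\ominus\overline{\bigcup_k\cL_k}=\{0\}$ is (2); and $\{\sN_k^\perp\}$ being a decreasing sequence of subspaces with trivial intersection, $P_{\sN_k^\perp}\to0$ and hence $P_{\sN_k}\to I_\cH$ strongly, which is (3).

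The single nonroutine ingredient is the non-closedness of $\ran U^*$: it is exactly what guarantees that $\sM$, and at every later stage $\sN_k$, is not a \emph{maximal} subspace transverse to $\ran A^{1/2}$, so the chain never terminates; the rest is bookkeeping.
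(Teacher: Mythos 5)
Your argument is correct, but it follows a genuinely different route from the paper. The paper iterates the lifting construction: starting from one subspace $\sM$ with the property \eqref{zeroint111} it forms $A_1=A^{1/2}P_\sM A^{1/2}$, $A_2=A_1^{1/2}P_\sM A_1^{1/2},\dots$, uses Propositions \ref{shm01} and \ref{mars1} to rewrite $A_k=A^{1/2}P_kA^{1/2}$ with a decreasing sequence of projections $P_k$ each satisfying the trivial-intersection property, sets $\sN_k=\ran(I-P_k)$, and gets $\bigcap_k\sN_k^\perp=\{0\}$ from $s\hbox{-}\lim_k A_k=0$ together with the isometry identity \eqref{restr}. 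You instead fix one $\sM$, use the block-matrix characterisation of Corollary \ref{VAZZN} and the factorisation $A=DD^*$ to identify $\ran A^{1/2}$ with $\{W\xi\oplus U^*\xi\}$, and thereby reduce the theorem to producing an increasing chain of \emph{finite-dimensional} subspaces $\cL_k\subset\sM^\perp$ with $\cL_k\cap\ran U^*=\{0\}$ and dense union; your equivalence (a), the observation (b), and the final limit arguments are all correct, as is the verification that $\ran U^*$ is a dense non-closed operator range in $\sM^\perp$. The one external ingredient you need, that a non-closed operator range cannot have finite codimension (so $\ran U^*+\cL_k$ stays proper), is indeed classical (\cite{FW}); it can also be proved in the spirit of the paper: if $\ran U^*+F=\sM^\perp$ with $\dim F<\infty$, then $U^*U+P_F$ is surjective, hence boundedly invertible, so $U$ is bounded below on $\sM^\perp\ominus F$ and $\ran U^*$ would be closed. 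What the two approaches buy is slightly different: your construction is more elementary (rank-one enlargements plus a density/induction argument) and shows that a single subspace $\sM$ with \eqref{zeroint111} can be exhausted to all of $\cH$ by finite-rank steps, each step preserving the property; the paper's construction yields subspaces whose increments are infinite-dimensional and comes packaged with the operator identities $A_k=A^{1/2}P_kA^{1/2}$, $A_k\downarrow 0$, which the paper exploits elsewhere (the extreme-point interpretation after Corollary \ref{nov1}). For Theorem \ref{monot} itself, and for its later use in Theorem \ref{now11}, your chain serves equally well.
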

\begin{proof}
(1) Choose $\sM\subset\cH$ such that $\sM\cap\ran A^{1/2}=\sM^\perp\cap\ran A^{1/2}=\{0\}$ and define
\[
A_1:=A^{1/2}P_\sM A^{1/2},\; A_2:=A^{1/2}_1P_\sM A^{1/2}_1,\;\ldots A_k:=A^{1/2}_{k-1}P_\sM A^{1/2}_k,\; \ldots.
\]
Then
\begin{equation}\label{A>A}
A\ge A_1\ge A_2\ge\cdots \ ,
\end{equation}
and $\ran A^{1/2}_k=A^{1/2}_{k-1}\sM\subset \ran A^{1/2}_{k-1}$ for $k\in\dN$. It follows that
$$
\begin{array}{l}
\ran A^{1/2}\supset \ran A^{1/2}_1\supset\ran
A^{1/2}_2\supset\cdots,\\
\sM\cap \ran A^{1/2}_k=\sM^\perp\cap \ran A^{1/2}_k=\{0\}, \; k\in\dN \ .
\end{array}
$$
In addition, by Proposition \ref{shm01}(1), $\ran A^{1/2}_1\cap\ran A=\{0\}$. Hence,
\begin{equation}
\label{polzn}
 \ran A^{1/2}_k\cap\ran A=\{0\},\; k\in\dN.
\end{equation}
Now from Proposition \ref{mars1} and (\ref{A>A}) it follows by induction that
\[
A_k=A^{1/2}P_kA^{1/2}, \; k\in\dN \ ,
\]
where $\{P_k\}_{k\in\dN}$ are orthogonal projections in $\cH$ with $P_1:=P_\sM \ $ such that
$$P_1\ge P_2\ge \ldots  \ge P_k\ge \ldots  \ . $$
Since $\ker A_k=\{0\}$, one gets $\ran (I-P_k)\cap\ran
A^{1/2}=\{0\}$. Equation \eqref{polzn} and Proposition \ref{shm01}
yield also that $\ran P_k \cap \ran A^{1/2}=\{0\}.$ Set
\[
\sN_1=\sM^{\perp}, \; \sN_k=\ran(I-P_k),\; k\in\dN.
\]
Then we obtain
\[
\sM^\perp=\sN_1\subset\sN_2\subset\ldots,\; \sM=\sN^\perp_1\supset\sN^\perp_2\supset\ldots \ .
\]
(2) Now let us show that
\[
\bigcap\limits_{k\in\dN}\sN^\perp_k=\{0\}.
\]
{Note first that the sequence $\{A_k\}_{k\geq 1}\subset\bB^+(\cH)$ is non-increasing. So, it has the strong limit
$A_0:= s-\lim_{k\to\infty}A_k$  and $\ran A^{1/2}_0\subset\ran A^{1/2}$.} Therefore, from
$A_k=A^{1/2}_{k-1}P_\sM A^{1/2}_k$, $k\in\dN$ we obtain $A_0=A^{1/2}_0P_\sM A^{1/2}_0$. Hence $\ran A^{1/2}_0\subset\sM.$
On the other hand $\ran A^{1/2}_0\cap\sM=\{0\}$, which implies that operator $A_0=0$.

Suppose $f\in \bigcap\limits_{k\in\dN}\sN^\perp_k$, i.e.,
$f=A^{1/2}_k f_k$. The equality $A_k=A^{1/2}_{k-1}P_\sM A^{1/2}_k$
and Proposition \ref{shm01} imply that
\[
||f_k||=||A^{-1/2}f|| \ \ \ \mbox{for all} \ \ k\in\dN.
\]
Since $s-\lim_{k\to\infty}A_k=0$ and
\[
(f, h)=(f_k, A^{1/2}_k h),\; k\in\dN,
\]
we get $f=0$. Thus $\bigcap\limits_{k\in\dN}\sN^\perp_k=\{0\}$.\\
(3) Moreover, since $s-\lim_{k\to\infty}P_k=0$, we also get $s-\lim_{k\to\infty}P_{\sN_k}=I_\cH.$
\end{proof}
Note that Theorem \ref{SchmTh333} can be reformulated in terms of the operator ranges as follows.
\begin{theorem}
\label{SchmTh3} Let operator range $\cR$ be non-closed and dense in
a Hilbert space $\cH$. Then there is a subspace $\sM\subset \cH$
such that
\[
 \sM\cap\cR=\sM^\perp\cap\cR=\{0\}.
\]
\end{theorem}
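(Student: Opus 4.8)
The plan is to reduce the statement to Theorem \ref{SchmTh333} by representing $\cR$ as the range of the square root of a suitable bounded non-negative operator, so that the entire substance of the argument is borrowed from the already-established machinery.

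First I would recall from the discussion of operator ranges in Section \ref{Prel} that, since $\cR$ is an operator range, $\cR=\ran\cA$ for some $\cA\in\bB(\cH)$, and moreover $\cR=\ran|\cA^*|=\ran(\cA\cA^*)^{1/2}$. Put $A:=\cA\cA^*\in\bB^+(\cH)$, so that $A^{1/2}=|\cA^*|$ and $\ran A^{1/2}=\cR$. Because $\cR$ is dense in $\cH$ we have $\ker A^{1/2}=\{0\}$, hence $\ker A=\{0\}$ and $A\in\bB_0^+(\cH)$. Because $\cR=\ran A^{1/2}$ is non-closed (in particular $\cR\ne\cH$) and $\ran A\subseteq\ran A^{1/2}=\cR$, we also get $\ran A\ne\cH$. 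Thus $A$ satisfies the hypotheses of Theorem \ref{SchmTh333}.

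Now I would simply invoke Theorem \ref{SchmTh333} for this operator $A$: it supplies a subspace $\sM\subset\cH$ (in fact a continuum of them) with $\sM\cap\ran A^{1/2}=\sM^\perp\cap\ran A^{1/2}=\{0\}$. Since $\ran A^{1/2}=\cR$, this is precisely $\sM\cap\cR=\sM^\perp\cap\cR=\{0\}$, which completes the proof.

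There is essentially no genuine obstacle here: all the depth is already packed into Theorem \ref{SchmTh333} (which itself rests on the von Neumann Theorem \ref{NeumTh2}, the parallel-sum construction of Proposition \ref{shm1}, and the lifting identities \eqref{fg2}). The only point deserving a word of care is the reduction step — checking that every non-closed dense operator range can be written in the canonical form $\ran A^{1/2}$ with $A\in\bB_0^+(\cH)$ and $\ran A\ne\cH$ — and this is exactly the elementary observation already made in the subsection on operator ranges.
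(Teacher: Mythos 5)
Your proposal is correct and follows essentially the same route as the paper: the paper's own proof simply writes $\cR=\ran A^{1/2}$ for some $A\in\bB_{0}^+(\cH)$ and invokes Theorem \ref{SchmTh333}. Your only addition is to spell out the reduction (via $A:=\cA\cA^*$, density giving $\ker A=\{0\}$, and non-closedness giving $\ran A\ne\cH$), which usefully verifies the hypotheses that the paper's one-line proof leaves implicit.
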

\begin{proof}
Let $A\in\bB_{0}^+(\cH)$, and $\ran A^{1/2}=\cR$. Then apply Theorem \ref{SchmTh333}.
\end{proof}
Concequently, by applying Proposition \ref{shm01} and Theorem \ref{SchmTh333} to $A=B^2$, where $B\in\bB^+(\cH)$, one
can now prove the Van Daele Theorem \ref{daele82-2}.
\begin{corollary}
Let $\{F_j\}_{j=1}^n\in\bB^+(\cH)$, $\ker \left(\sum\limits_{j=1}^n F_j\right)=\{0\}$, and $\ran \left(\sum\limits_{j=1}^n
F_j\right)\ne\cH$. Then there are infinitely many subspaces $\sM$
such that
\[
\sM\cap\ran\left(\sum\limits_{j=1}^n F_j\right)^{1/2}=
\sM^\perp\cap\ran\left(\sum\limits_{j=1}^n F_j\right)^{1/2}=\{0\}.
\]
In particular
\[
\sM\cap\ran F^{1/2}_j=\sM^\perp\cap\ran F^{1/2}_j=\{0\} \;\mbox{for
all}\; j=1,2\ldots, n.
\]
\end{corollary}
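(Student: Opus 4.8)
The plan is to deduce the whole statement from Theorem~\ref{SchmTh333} applied to the single operator $A:=\sum_{j=1}^n F_j$. First I would check the hypotheses: since each $F_j\ge 0$ we have $A\in\bB^+(\cH)$; the assumption $\ker A=\{0\}$ puts $A$ into $\bB_0^+(\cH)$; and $\ran A\ne\cH$ holds by assumption. Thus Theorem~\ref{SchmTh333} directly produces a continuum --- in particular infinitely many --- subspaces $\sM\subset\cH$ with
\[
\sM\cap\ran A^{1/2}=\sM^\perp\cap\ran A^{1/2}=\{0\},
\]
which is exactly the first displayed assertion of the corollary. If one prefers an explicit increasing (or decreasing) infinite chain of such subspaces rather than just a continuum, one may invoke Theorem~\ref{monot} instead; ``infinitely many'' follows from that version as well.

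The remaining ``in particular'' clause is then a one-line consequence of the monotonicity of operator ranges under the order $\le$. The key ingredient is the range identity \eqref{doug11}, which yields
\[
\ran F_j^{1/2}\ \subseteq\ \ran F_1^{1/2}+\cdots+\ran F_n^{1/2}=\ran\Bigl(\sum_{k=1}^n F_k\Bigr)^{1/2}=\ran A^{1/2},\qquad j=1,\dots,n;
\]
alternatively this inclusion is immediate from $0\le F_j\le A$ together with the Douglas Theorem~\ref{Douglas}. Hence, for every subspace $\sM$ produced above and every index $j$,
\[
\sM\cap\ran F_j^{1/2}\subseteq\sM\cap\ran A^{1/2}=\{0\},\qquad \sM^\perp\cap\ran F_j^{1/2}\subseteq\sM^\perp\cap\ran A^{1/2}=\{0\},
\]
which is the second assertion.

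I do not expect any genuine obstacle here: the corollary is simply a repackaging of Theorem~\ref{SchmTh333}, the substantive content having already been spent there (and in Theorems~\ref{NeumTh2} and~\ref{shm1} on which it relies). The only point calling for a moment's attention is the verification that $A=\sum_{j=1}^n F_j$ really meets the hypotheses of Theorem~\ref{SchmTh333} --- namely $\ker A=\{0\}$ and $\ran A\ne\cH$, which are precisely the data assumed in the corollary --- after which everything is immediate.
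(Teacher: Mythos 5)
Your argument is correct and is exactly the route the paper intends: the corollary is stated as an immediate consequence of Theorem~\ref{SchmTh333} applied to $A=\sum_{j=1}^n F_j$, with the ``in particular'' clause following from the range identity \eqref{doug11} (equivalently, $F_j\le A$ plus the Douglas Theorem~\ref{Douglas}), which is precisely what you do. Nothing is missing.
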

\begin{corollary}
\label{nov1} For arbitrary operator $A\in\bB^{+}_{0}(\cH)$ with  $\ran A\ne \cH$ there exists infinitely
many pairs $A_1,$ $A_2\in\bB^+(\cH)$  such that
\begin{equation} \label{aaa}
\begin{array}{l} A=A_1+A_2,\\
\ker A_1=\ker A_2=\{0\},\\
\ran A^{1/2}_1\cap\ran A^{1/2}_2=0,\\
\ran A^{1/2}_1\dot+\ran A^{1/2}_2=\ran A^{1/2}.
\end{array}
\end{equation}
\end{corollary}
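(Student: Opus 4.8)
The plan is to deduce \eqref{aaa} directly from Theorem \ref{SchmTh333}, Proposition \ref{shm01}, and the Douglas Theorem \ref{Douglas}; the corollary is essentially a repackaging of these results. First I would fix, using Theorem \ref{SchmTh333}, a subspace $\sM\subset\cH$ with
\[
\sM\cap\ran A^{1/2}=\sM^\perp\cap\ran A^{1/2}=\{0\},
\]
and set $A_1:=A^{1/2}P_\sM A^{1/2}$ and $A_2:=A^{1/2}(I-P_\sM)A^{1/2}$. Both belong to $\bB^+(\cH)$, and since $P_\sM+(I-P_\sM)=I$ one immediately gets $A_1+A_2=A^{1/2}A^{1/2}=A$, which is the first line of \eqref{aaa}.

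For the kernels I would invoke Proposition \ref{shm01}(1)(a): applied to $\sM$ it gives $\ker A_1=\{0\}$ because $\sM^\perp\cap\ran A^{1/2}=\{0\}$, and applied to $\sM^\perp$ (so that the roles of $A_1$ and $A_2$ get exchanged) it gives $\ker A_2=\{0\}$ because $\sM\cap\ran A^{1/2}=\{0\}$. For the last two lines I would use the Douglas Theorem \ref{Douglas}, which yields $\ran A_1^{1/2}=A^{1/2}\sM$ and $\ran A_2^{1/2}=A^{1/2}\sM^\perp$, together with the fact that $\ker A=\{0\}$ forces $\ker A^{1/2}=\{0\}$, i.e. $A^{1/2}$ is injective. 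Injectivity then gives
\[
A^{1/2}\sM\cap A^{1/2}\sM^\perp=A^{1/2}(\sM\cap\sM^\perp)=\{0\},\qquad A^{1/2}\sM+A^{1/2}\sM^\perp=A^{1/2}(\sM+\sM^\perp)=\ran A^{1/2},
\]
which are exactly $\ran A_1^{1/2}\cap\ran A_2^{1/2}=\{0\}$ and $\ran A_1^{1/2}\dot+\ran A_2^{1/2}=\ran A^{1/2}$.

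To obtain \emph{infinitely many} such pairs --- in fact a continuum of them --- I would observe that distinct admissible subspaces yield distinct operators $A_1$. Indeed, if $A^{1/2}P_\sM A^{1/2}=A^{1/2}P_{\sM'}A^{1/2}$, then for all $f,g\in\cH$ one has $((P_\sM-P_{\sM'})A^{1/2}f,A^{1/2}g)=0$; since $\ran A^{1/2}$ is dense (because $\ker A^{1/2}=\{0\}$ and $A^{1/2}$ is self-adjoint) this forces $P_\sM=P_{\sM'}$, hence $\sM=\sM'$. As Theorem \ref{SchmTh333} furnishes a continuum of subspaces $\sM$ with the trivial-intersection property above, the assignment $\sM\mapsto\langle A_1,A_2\rangle$ is injective, and \eqref{aaa} therefore holds for continuum-many pairs. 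I do not expect any genuine obstacle here: the entire analytic content is carried by Theorem \ref{SchmTh333} (already established), and the only mildly delicate point --- triviality of $\ran A_1^{1/2}\cap\ran A_2^{1/2}$ --- reduces at once to injectivity of $A^{1/2}$.
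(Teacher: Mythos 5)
Your proposal is correct and follows essentially the same route as the paper: the paper's proof likewise takes a subspace $\sM$ with $\sM\cap\ran A^{1/2}=\sM^\perp\cap\ran A^{1/2}=\{0\}$ (furnished by Theorem \ref{SchmTh333}) and sets $A_1=A^{1/2}P_\sM A^{1/2}$, $A_2=A^{1/2}P_{\sM^\perp}A^{1/2}$, the verification of \eqref{aaa} resting on Proposition \ref{shm01} and the Douglas theorem exactly as you describe. Your additional observation that $\sM\mapsto A_1$ is injective (via density of $\ran A^{1/2}$) is a welcome detail the paper leaves implicit when asserting infinitely many pairs.
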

\begin{proof}
Let $\sM$ be a subspace and $\sM\cap\ran A^{1/2}=\sM^\perp\cap\ran
A^{1/2}=\{0\}$. Define two operators
\[
A_1=A^{1/2}P_\sM A^{1/2},\; A_2= A^{1/2}P_{\sM^\perp}A^{1/2}.
\]
Then equalities in \eqref{aaa} are satisfied.
\end{proof}

{Since by definition for any operator $A\in\bB^{+}(\cH)$  the set of all \textit{extreme points} of the
operator interval $[0,A]$ are of the form}
\[
\{A^{1/2}PA^{1/2}:\;{P} \; \mbox{is an arbitrary orthogonal
projection in}\;\cH\},
\]
see \cite{Pek}, the statement of Corollary \ref{nov1} has the following interpretation:

\textit{There exists infinitely many pairs $\left<X, A-X\right>$ of extreme points of the operator interval $[0, A]$ such
that $\ker X=\ker(A-X)=\{0\}.$ Moreover, there are increasing {\rm{(}}decreasing{\rm{)}} sequences $\{X_n\}_{n\geq1}$ of
such extreme points, which in addition have the property
$s-\lim_{n\to\infty} X_n=A$ {\rm{(}}$s-\lim _{n\to\infty} X_n=0${\rm{)}}.}

\subsection{Lifting of operators}\label{LiftOper}
Let $A\in\bB_{0}^+(\cH)$ with $\ran A\ne \cH$. For a given subspace $\sM$, possessing the property
\eqref{zeroint111}, and for the corresponding orthogonal projection $P_\sM$, we are looking for existence of representation
of the operator $A$ in the form:
\begin{equation}
\label{repr11}
\begin{array}{l}
 A=T^{1/2}P_\sM T^{1/2},\\
 T\in\bB_{0}^+(\cH),\\
 \; \sM\cap\ran
T^{1/2}=\sM^\perp\cap\ran T^{1/2}=\{0\}.
\end{array}
\end{equation}
We call this representation the \textit{lifting of operator} $A$ and we refer to the operator $T$ as to the
\textit{lifting operator} for a given subspace $\sM$.

{The following statement, which makes our concept of lifting nontrivial} can be easily derived from
Proposition \ref{shm1} and Theorem \ref{SchmTh3}.
\begin{proposition}\label{lift1}
Let $A\in\bB_{0}^+(\cH)$ with $\ran A\ne \cH$. Then operator $A$ admits a lifting in the form
$$
A=T^{1/2}PT^{1/2} ,
$$
where $T\in\bB_{0}^+(\cH)$ and $P$ is an orthogonal projection in $\cH$ such that
\[
\ran P\cap\ran T^{1/2} = \ran(I-P)\cap\ran T^{1/2}=\{0\} \ .
\]
\end{proposition}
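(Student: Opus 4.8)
The plan is to realize $A$ as a lifting by choosing the lifting operator $T$ of the special form $T = A + B$ with $B$ a suitable second operator, following exactly the recipe already spelled out in the introduction. First I would invoke Theorem~\ref{NeumTh2} (the operator-range form of von Neumann's theorem) to produce a bounded operator $B\in\bB_{0}^+(\cH)$ with $\ran B^{1/2}\cap\ran A^{1/2}=\{0\}$; concretely one may take $B=UAU^*$ for a unitary $U$ realizing $\ran A^{1/2}\cap U\ran A^{1/2}=\{0\}$, and note that $U\ran A^{1/2}=\ran(UAU^*)^{1/2}=\ran B^{1/2}$. By Proposition~\ref{root}, the condition $\ran B^{1/2}\cap\ran A^{1/2}=\{0\}$ is equivalent to $A:B=0$.

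Next I would apply Proposition~\ref{shm1}(1) with $F:=A$, $G:=B$. Since $\ker A=\{0\}$ (as $A\in\bB_{0}^+(\cH)$) and $\ker B=\{0\}$ (since $B=UAU^*$ and $U$ is unitary), and since $A:B=0$, hypothesis \eqref{cond1} holds. The proposition then yields a subspace $\sM\subset\cH$ with
\[
\sM\cap\ran(A+B)^{1/2}=\sM^\perp\cap\ran(A+B)^{1/2}=\{0\},
\]
and, more importantly, its proof exhibits the decomposition
\[
A=(A+B)^{1/2}P_\sM(A+B)^{1/2},\qquad B=(A+B)^{1/2}P_{\sM^\perp}(A+B)^{1/2},
\]
where $P:=P_\sM$ is an orthogonal projection. (This is precisely the form recorded in the proof of Theorem~\ref{SchmTh333}.) Setting $T:=A+B$, which lies in $\bB_{0}^+(\cH)$ because $\ker A=\{0\}$ forces $\ker T=\{0\}$, gives the representation $A=T^{1/2}PT^{1/2}$ together with $\ran P\cap\ran T^{1/2}=\ran(I-P)\cap\ran T^{1/2}=\{0\}$, which is the assertion of the proposition.

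There is essentially no obstacle here; the content is entirely in the two cited results, and the role of this proposition is organizational — to record that a lifting with a \emph{projection} always exists, as a warm-up to the finer question (treated afterwards in Section~\ref{LiftOper}) of which prescribed subspaces $\sM$ can serve. The only point requiring a word of care is the verification that the operators produced lie in $\bB_{0}^+(\cH)$ rather than merely in $\bB^+(\cH)$, i.e. that $\ker T=\ker(A+B)=\{0\}$: this is immediate from $0\le A\le A+B$ and $\ker A=\{0\}$, since $((A+B)f,f)=0$ implies $(Af,f)=0$ hence $A^{1/2}f=0$ hence $f=0$. Everything else is a direct quotation of Proposition~\ref{shm1} and Theorem~\ref{NeumTh2}.
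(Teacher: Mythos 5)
Your proposal is correct and follows essentially the same route as the paper, which derives Proposition~\ref{lift1} from Proposition~\ref{shm1} together with the construction in Theorem~\ref{SchmTh333} (take $B=UAU^*$ via Theorem~\ref{NeumTh2}, set $T=A+B$, and use the decomposition $A=(A+B)^{1/2}P_\sM(A+B)^{1/2}$). Your added check that $\ker(A+B)=\{0\}$ is a correct and welcome detail, but nothing in your argument departs from the paper's intended derivation.
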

\noindent Notice that from the Proposition \ref{lift1} one also obtains the triviality of intersections:
\[
\ran P\cap\ran A^{1/2}=\{0\} \ \  \ {\rm{and}} \ \  \ \ran(I-P)\cap\ran A^{1/2}=\{0\}.
\]

For the following we need an auxiliary statement concerning the operator ranges, which we formulate as the
lemma.
\begin{lemma} \label{obr1} Let operator range $\cR$ be dense in $\cH$. Then there are operators
$Y\in \bB_{0}^+(\cH)$ such that
\[
\ran Y\cap\cR=\{0\}\ \ \ {\rm{and}} \ \ \ \ran Y^{1/2}\supset\cR \ .
\]
\end{lemma}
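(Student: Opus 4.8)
The plan is to build a single operator whose range misses the dense operator range $\cR$ while the range of its square root contains $\cR$. Since $\cR$ is dense and of the form $\cR=\ran X^{1/2}$ for some $X\in\bB_0^+(\cH)$, by the von Neumann Theorem~\ref{NeumTh2} (applied to the nonclosed dense range $\cR$) there is a unitary $U$ with $U\cR\cap\cR=\{0\}$. Setting $Z:=UXU^*\in\bB_0^+(\cH)$ we get $\ran Z^{1/2}=U\cR$, so $\ran Z^{1/2}\cap\cR=\{0\}$, i.e. $\ran Z^{1/2}\cap\ran X^{1/2}=\{0\}$. Equivalently, by Proposition~\ref{root}, the parallel sum satisfies $X:Z=0$.

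The key step is then to take $Y:=X+Z$ (up to normalisation so that $Y\in\bB_0^+(\cH)$, which is automatic since $\ker X=\ker Z=\{0\}$ forces $\ker(X+Z)=\{0\}$ and boundedness is clear). By the Douglas-type identity~\eqref{doug11} one has $\ran Y^{1/2}=\ran(X+Z)^{1/2}=\ran X^{1/2}+\ran Z^{1/2}\supseteq\ran X^{1/2}=\cR$, which gives the second required inclusion. For the first, one must check $\ran Y\cap\cR=\{0\}$, i.e. $\ran(X+Z)\cap\ran X^{1/2}=\{0\}$. Here I would use $\ran Y\subset\ran Y^{1/2}$, and the decomposition $Y=Y^{1/2}P\,Y^{1/2}+Y^{1/2}(I-P)Y^{1/2}$ coming from~\eqref{fg2}, where $P$ is the orthogonal projection with $X=Y^{1/2}PY^{1/2}$, $Z=Y^{1/2}(I-P)Y^{1/2}$; since $X:Z=0$, Proposition~\ref{root}(2) tells us $P$ is a genuine orthogonal projection (on $\cran Y=\cH$), so $\ran P\cap\ran Y^{1/2}=\{0\}$ and $\ran(I-P)\cap\ran Y^{1/2}=\{0\}$, exactly the situation of Proposition~\ref{shm01}. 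In particular $\ran(Y^{1/2}PY^{1/2})\cap\ran Y=\{0\}$ would follow, but I actually need the reverse-looking inclusion $\ran Y\cap\ran X^{1/2}=\{0\}$.

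The honest route to that last intersection is: if $g\in\ran Y\cap\ran X^{1/2}$, write $g=Yf$ and $g=X^{1/2}h$. Applying $Y^{-1/2}$ (defined on $\ran Y^{1/2}\ni g$ since $\ker Y=\{0\}$) gives $Y^{-1/2}g=Y^{1/2}f\in\ran Y^{1/2}$ trivially, so that is not yet a contradiction; instead I use that $X=Y^{1/2}PY^{1/2}$ with $P$ an orthogonal projection, whence $\ran X^{1/2}=Y^{1/2}\,\ran P=Y^{1/2}(P\cH)$ by Douglas, so $g=Y^{1/2}(Pu)$ for some $u$, i.e. $Y^{-1/2}g=Pu\in P\cH$. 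On the other hand $g=Yf=Y^{1/2}(Y^{1/2}f)$ gives $Y^{-1/2}g=Y^{1/2}f\in\ran Y^{1/2}$. Thus $Y^{1/2}f\in P\cH\cap\ran Y^{1/2}=\{0\}$ (using that $\ran P\cap\ran Y^{1/2}=\{0\}$, which is the content of Proposition~\ref{shm01}(1) applied to the subspace $\sM=P\cH$, valid precisely because $P$ is the projection from the parallel-sum decomposition of two operators with trivial root intersection), hence $g=Y^{1/2}(Y^{1/2}f)=0$.

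\textbf{Main obstacle.} The delicate point is the very last intersection $\ran Y\cap\cR=\{0\}$: it is tempting but wrong to argue it directly from $\ran Y\subset\ran Y^{1/2}$ and $\ran Y^{1/2}\supset\cR$ (that inclusion goes the wrong way). One genuinely has to exploit the finer structure, namely that the projection $P$ arising in $X=Y^{1/2}PY^{1/2}$ is an \emph{orthogonal} projection (guaranteed by $X:Z=0$ via Proposition~\ref{root}), and then invoke Proposition~\ref{shm01}(1), which says exactly that $\ran P\cap\ran Y^{1/2}=\{0\}$ and $\ran(P\cH$'s orthocomplement$)\cap\ran Y^{1/2}=\{0\}$, to kill the vector $Y^{1/2}f$. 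Once this structural input is in place, everything else (boundedness, injectivity, the inclusion $\ran Y^{1/2}\supset\cR$) is routine.
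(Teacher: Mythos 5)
Your proposal is correct and follows essentially the same route as the paper: you take $Y$ to be the sum of an operator whose square-root range is $\cR$ and a von Neumann companion with trivially intersecting square-root range, observe that the first summand equals $Y^{1/2}PY^{1/2}$ for an orthogonal projection $P$ (Proposition~\ref{root}), and deduce both $\ran Y^{1/2}\supset\cR$ and $\ran Y\cap\cR=\{0\}$ via the equivalences of Proposition~\ref{shm01}, exactly as in the paper's proof. The only difference is cosmetic: you unroll the final equivalence by hand (note that $\ran P\cap\ran Y^{1/2}=\{0\}$ comes from the triviality of the kernel of the \emph{other} summand, i.e.\ Proposition~\ref{shm01} applied with $\sM=(I-P)\cH$, or directly Proposition~\ref{shm1}), whereas the paper cites Proposition~\ref{shm01} directly.
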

\begin{proof} Let $Z\in \bB^+(\cH)$ with $\cR=\ran Z^{1/2}$. Since $\cR$ is dense, we have $\ker Z=\{0\}$.
{Then by Proposition \ref{root} one can find $X\in\bB^+(\cH)$, such that}
$$\ker X=\{0\} \ \ \ {\rm{and}} \ \ \ \ran X^{1/2}\cap\ran Z^{1/2}=\{0\} ,$$
and $Z=(Z+X)^{1/2}P(Z+X)^{1/2}$, where $P$ is an orthogonal projection. Set $Y:=Z+X$.
By construction we
have $\ran Y^{1/2}\supset\ran Z^{1/2}.$  Since $\ker X =\ker Z=\{0\}$, by Proposition \ref{shm01} we get $\ran Y\cap\ran
Z^{1/2}=\{0\}$ and the proof is completed.
\end{proof}
\begin{theorem}
\label{obr2} Let $A\in\bB_{0}^+(\cH)$ and let $\sM$ be a subspace
possessing \eqref{zeroint111}. Suppose that the block operator-matrix $A$ is of the form:
\[
A=\begin{bmatrix} A_{11}&A_{12}\cr A_{12}^*&A_{22}
\end{bmatrix}:\begin{array}{l}\sM\\\oplus\\ \sM^\perp\end{array}\to
\begin{array}{l}\sM\\\oplus\\ \sM^\perp\end{array}.
\]
Then operator $A$ admits the lifting in the form \eqref{repr11} for the subspace $\sM$ if and only if
\begin{equation}
\label{raninc} \ran A_{12}\subset \ran A^{3/4}_{11}.
\end{equation}
\end{theorem}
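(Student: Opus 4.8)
The plan is to analyse the condition \eqref{repr11} directly in terms of the block decomposition $\cH=\sM\oplus\sM^\perp$. Suppose first that $A$ admits a lifting $A=T^{1/2}P_\sM T^{1/2}$ with $T\in\bB_0^+(\cH)$ and $\sM\cap\ran T^{1/2}=\sM^\perp\cap\ran T^{1/2}=\{0\}$. By Corollary \ref{VAZZN} the operator $T$ has, with respect to $\cH=\sM\oplus\sM^\perp$, the form $T=\begin{bmatrix} W^2&WU\cr U^*W&U^*U\end{bmatrix}$ with $W\in\bB_0^+(\sM)$, $U\in\bB(\sM^\perp,\sM)$, $\ker U=\ker U^*=\{0\}$, and $\ran W\cap\ran U=\{0\}$. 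Then $T=\Xi^*\Xi$ where $\Xi:=\begin{bmatrix}W & U\end{bmatrix}:\sM\oplus\sM^\perp\to\sM$, so $T^{1/2}=\Lambda\Xi$ for a partial isometry $\Lambda$ (with $\Lambda^*\Lambda=P_{\cran\Xi}=I_\sM$ here, since $\cran W=\sM$), and hence $P_\sM T^{1/2}=P_\sM\Lambda\Xi$. From $A=T^{1/2}P_\sM T^{1/2}=\Xi^*\Lambda^*P_\sM\Lambda\Xi$, and writing out the $(1,2)$-block and the $(1,1)$-block, one expresses $A_{11}$ and $A_{12}$ through $W$, $U$ and the $\sM$-component of $\Lambda$. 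The key computation is that $A_{11}=W G W$ and $A_{12}=W G U$ for some bounded non-negative operator $G$ on $\sM$ (the compression of $\Lambda^*P_\sM\Lambda$), so that by the Douglas Theorem \ref{Douglas} $\ran A_{12}\subset\ran(WGW)^{1/2}\cdot(\text{bounded})\subset\ran A_{11}^{1/2}$; refining this using \eqref{usef} (namely $\ran(W G W)^{1/2}=W\ran G^{1/2}$) together with $\ran A_{12}=\ran WGU\subset W\ran G$ and the factorisation $A_{11}^{1/2}=(WGW)^{1/2}$ should upgrade the exponent to $3/4$. Concretely, $\ran A_{11}^{3/4}=\ran A_{11}^{1/4}\ran A_{11}^{1/2}$, and one checks $\ran A_{11}^{1/4}\supset\ran W^{1/2}$ while $\ran A_{11}^{1/2}=W\ran G^{1/2}\supset\ran WG\supset\ran A_{12}$-type inclusion, combining to give \eqref{raninc}.

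For the converse, assume $\ran A_{12}\subset\ran A_{11}^{3/4}$. By the Douglas Theorem there is $C\in\bB(\sM^\perp,\sM)$ with $A_{12}=A_{11}^{3/4}C$ and $\ran C\subset\cran A_{11}$, so $\ker C=\ker A_{12}$. One then wants to \emph{guess} the lifting operator $T$. Motivated by the shape of Corollary \ref{VAZZN}, I would try $W:=A_{11}^{1/2}$ (or an appropriate power) and choose $U$ so that $A_{12}=WGU$ and $A_{11}=WGW$ hold with $G$ an orthogonal-projection-like operator; the natural candidate is to set $G:=P_{\cran A_{11}}$ on the range side and take $U:=A_{11}^{1/4}C$, which is bounded, injective on $(\ker A_{12})^\perp$, and satisfies $WU=A_{11}^{1/2}A_{11}^{1/4}C=A_{11}^{3/4}C=A_{12}$. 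One then must verify the two trivial-intersection conditions $\ran W\cap\ran U=\{0\}$ (equivalently $\ker T=\{0\}$ and the shorted operators $T_\sM=T_{\sM^\perp}=0$, by Corollary \ref{VAZZN}) and that $T:=\begin{bmatrix}W^2&WU\cr U^*W&U^*U\end{bmatrix}=\begin{bmatrix}A_{11}&A_{12}\cr A_{12}^*&C^*A_{11}^{1/2}C\end{bmatrix}$ indeed satisfies $A=T^{1/2}P_\sM T^{1/2}$. The latter reduces, via $T^{1/2}=\Lambda\begin{bmatrix}W&U\end{bmatrix}$ and $P_\sM T^{1/2}=\begin{bmatrix}W&U\end{bmatrix}$ (after identifying $\ran T^{1/2}$ appropriately), to the block identity $\Xi^*\Xi\,$ restricted correctly reproducing $\begin{bmatrix}A_{11}&A_{12}\cr A_{12}^*&A_{22}\end{bmatrix}$ in the first column and compatibility of $A_{22}$; here one invokes that $A$ itself has the required trivial-intersection property \eqref{zeroint111}, which by the criterion stated just before Theorem \ref{SchmTh333} forces $\ker A_{11}=\ker A_{22}=\{0\}$ and $\ran A_{12}\cap\ran A_{11}=\{0\}$, and these conditions feed into showing $\ran W\cap\ran U=\{0\}$.

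The main obstacle I anticipate is precisely the \emph{bookkeeping of exponents}: getting the sharp power $3/4$ rather than $1/2$ in \eqref{raninc}. In the "only if" direction this requires using not merely Douglas inclusions but the finer identity \eqref{usef} to track $\ran(F^{1/2}MF^{1/2})^{1/2}=F^{1/2}\ran M^{1/2}$, applied with $F$ a power of $A_{11}$; and the appearance of $A_{11}^{3/4}$ rather than $A_{11}$ comes from the "square-root-of-a-sandwich" structure $A_{11}=(A_{11}^{1/2})\,G\,(A_{11}^{1/2})$ combined with a further square root from $T^{1/2}$ itself, i.e. two half-powers stacking to $1/2+1/4$. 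In the "if" direction the subtlety is dual: one must ensure the constructed $U=A_{11}^{1/4}C$ genuinely has dense range in $\sM$ and trivial kernel (so that $\ker T=\{0\}$), which uses $\ker A_{12}$ having trivial intersection behaviour inherited from $A\in\bB_0^+(\cH)$ satisfying \eqref{zeroint111}, and that $\ran W\cap\ran U=\ran A_{11}^{1/2}\cap\ran A_{11}^{1/4}C=\{0\}$ — this last equality is where the hypothesis $\ran A_{12}\cap\ran A_{11}=\{0\}$ (available from $\sM\cap\ran A^{1/2}=\{0\}$) gets used in an essential way. Everything else — boundedness of the blocks, non-negativity of $T$ via \eqref{POZ}, and strong-limit or density arguments — is routine once the exponent arithmetic is pinned down.
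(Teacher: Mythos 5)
There are genuine gaps in both directions, and in both cases the missing ingredient is the same: the decisive structural information sits in the block form of $T^{1/2}$, not of $T$. In your ``only if'' argument you factor $T=\Xi^*\Xi$ with $\Xi=\begin{bmatrix}W&U\end{bmatrix}$ and arrive at $A=\Xi^*G\Xi$, i.e. $A_{11}=WGW$, $A_{12}=WGU$, where $G=\Lambda^*P_\sM\Lambda$ is a non-negative contraction on $\sM$. From this, \eqref{usef} and the Douglas Theorem \ref{Douglas} give only $\ran A_{12}\subset W\ran G^{1/2}=\ran A_{11}^{1/2}$, i.e. the exponent $1/2$. The step you invoke to upgrade to $3/4$, namely $\ran A_{11}^{1/4}\supset\ran W^{1/2}$, goes the wrong way: since $0\le G\le I$ one has $A_{11}=WGW\le W^2$, hence $A_{11}^{1/2}\le W$ and, by Douglas, $\ran A_{11}^{1/4}\subset\ran W^{1/2}$; the reverse inclusion is not available (here $\ker G=\Lambda^*\sM^\perp\ne\{0\}$, and writing $X_{11}:=(T^{1/2})_{11}$, $X_{12}:=(T^{1/2})_{12}$ one has $W^2=X_{11}^2+X_{12}X_{12}^*$ and $A_{11}=X_{11}^2$, so your inclusion would force $(X_{11}^2+X_{12}X_{12}^*)^{1/2}\le\lambda X_{11}$, which fails in general). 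What actually produces the exponent $3/4$ is the positivity of the square root itself: applying \eqref{shmul1} to $T^{1/2}$ gives $X_{12}=X_{11}^{1/2}\Gamma X_{22}^{1/2}$ with a contraction $\Gamma$, and $A=(P_\sM T^{1/2})^*(P_\sM T^{1/2})$ has blocks $A_{11}=X_{11}^2$, $A_{12}=X_{11}X_{12}$; hence $X_{11}=A_{11}^{1/2}$ and $\ran A_{12}=X_{11}\ran X_{12}\subset X_{11}\ran X_{11}^{1/2}=\ran A_{11}^{3/4}$. Your factorization through $T=\Xi^*\Xi$ discards exactly this constraint on the off-diagonal block of $T^{1/2}$.

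In the converse your construction collapses. With $W:=A_{11}^{1/2}$ and $U:=A_{11}^{1/4}C$ (where $A_{12}=A_{11}^{3/4}C$) you get $W^2=A_{11}$, $WU=A_{12}$, and since $U=W^{-1}A_{12}$ is precisely the datum of Corollary \ref{VAZZN} for $A$, also $U^*U=A_{22}$; thus your candidate $T=\begin{bmatrix}W^2&WU\cr U^*W&U^*U\end{bmatrix}$ is nothing but $A$ itself, and the identity to be checked becomes $A=A^{1/2}P_\sM A^{1/2}$, which is false, because $A-A^{1/2}P_\sM A^{1/2}=A^{1/2}P_{\sM^\perp}A^{1/2}\ne 0$ when $\ran A^{1/2}$ is dense. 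The matrix built from $W$ and $U$ must play the role of $T^{1/2}$, not of $T$, and the entire content of this direction is the construction of its free $(2,2)$ block: setting $M:=W^{-1/2}U=A_{11}^{-3/4}A_{12}$ (well defined and bounded thanks to \eqref{raninc}), one must find $X_{22}\in\bB_{0}^+(\sM^\perp)$ such that simultaneously $\ran M^*\subset\ran X_{22}^{1/2}$ with a contractive factor (so that $L:=\begin{bmatrix}W&U\cr U^*&X_{22}\end{bmatrix}$ is non-negative and its off-diagonal block reproduces $U$) and $\ran X_{22}\cap\ran M^*=\{0\}$, which together with $\ran W\cap\ran U=\{0\}$ yields $\sM\cap\ran L=\sM^\perp\cap\ran L=\{0\}$; such an $X_{22}$ is manufactured by Lemma \ref{obr1}, and then $T:=L^2$ satisfies $A=LP_\sM L=T^{1/2}P_\sM T^{1/2}$ with the required trivial intersections. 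Your proposal never addresses this choice, so the ``if'' direction is not established either.
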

\begin{proof}
By Corollary \ref{VAZZN} the block operator-matrix $A$ with respect to decomposition $\cH=\sM\oplus\sM^\perp$ is of the
form \eqref{crab1}:
\[ A=\begin{bmatrix} W^2&WU\cr U^*W&U^*U
\end{bmatrix}:\begin{array}{l}\sM\\\oplus\\ \sM^\perp\end{array}\to
\begin{array}{l}\sM\\\oplus\\ \sM^\perp\end{array},
\]
where the matrix entry are
\[
A_{11}=W^2,\; A_{12}=WU,\; A^*_{12}=U^*W,\; A_{22}=U^*U.
\]
Note that by \eqref{crab2}, $\ran W\cap\ran U=\{0\}$, i.e., $\ran
A^{1/2}_{11}\cap \ran(A^{-1/2}_{11}A_{12})=\{0\}$.

Suppose that representation \eqref{repr11} is valid for some  $T\in\bB_{0}^+(\cH)$.
By virtue of \eqref{shmul1}  the operator $T^{1/2}$ has a matrix form with respect to decomposition
$\cH=\sM\oplus\sM^\perp$:
\[
T^{1/2}=\begin{bmatrix} X_{11}&X^{1/2}_{11}G X^{1/2}_{22}\cr
X^{1/2}_{22}G^* X^{1/2}_{11}&X_{22}\end{bmatrix},
\]
where $G\in\bB(\sM^\perp,\sM)$ is a contraction. Since $\ker T=\{0\}$, one has $\ker X_{11}=\{0\}$ and $\ker X_{22}=\{0\}$.
Hence
\[
\begin{bmatrix} W^2&WU\cr U^*W&U^*U
\end{bmatrix}=A=T^{1/2}P_\sM T^{1/2}=\begin{bmatrix} X^2_{11}& X_{11}X_{12}\cr
X^*_{12}X_{11}& X^*_{12}X_{12}\end{bmatrix},
\]
where $X_{12}:=X^{1/2}_{11}G X^{1/2}_{22}$  and consequently
\[
X_{11}=W, \ X_{12} = U = W^{1/2}G X^{1/2}_{22}, \ \ran W^{1/2}\supset\ran U \ .
\]
Therefore, the inclusion \eqref{raninc} holds.

Now assume \eqref{raninc} and define $M:=W^{-1/2}U=A^{-3/4}_{11}A_{12}$. Since $\ran U\cap\ran W=\{0\}$,
we get $\ran M\ne \cH$. The latter and the equality $\ker U=\{0\}$ imply that $\ran M^*$ is dense in $\cH$ but
$\ran M^*\ne \cH$. Now, by Lemma \ref{obr1} one can find $Y\in \bB_{0}^+(\sM^\perp)$  such that
$$
\ran Y\cap \ran M^*=\{0\}\quad\mbox{and}\quad\ran Y^{1/2}\supset\ran
M^*.
$$
Define $Q:=tY^{-1/2}M^*$, where $t>0$ is such that $||Q||\le 1$ and set
$$
X_{22}:=Y/t^2,\; G:=Q^*.
$$
Then $M^*=X^{1/2}_{22}G^*$, $M=GX^{1/2}_{22}$ and finally $U=W^{1/2}GX^{1/2}_{22} $.

Let us introduce
\[
L:=\begin{bmatrix}W&W^{1/2}G X^{1/2}_{22}\cr
X^{1/2}_{22}G^*W^{1/2}&X_{22}
\end{bmatrix}=\begin{bmatrix}W&U\cr
U^*&X_{22}\end{bmatrix}.
\]
Then $L\in \bB_{0}^+(\cH)$ and the equality
\[
A=LP_\sM L=\begin{bmatrix}W^2&WU\cr U^*W&U^*U\end{bmatrix}
\]
holds. If now we define $T:=L^2$, then $A=T^{1/2}P_\sM T^{1/2}$, where
\begin{multline*}
T^{1/2}\begin{bmatrix} f_1\cr f_2\end{bmatrix}=\begin{bmatrix}
Wf_1+Uf_2\cr U^*f_1+X_{22}f_2\end{bmatrix}\\
=\begin{bmatrix}Wf_1+Uf_2\cr
X^{1/2}_{22}G^*W^{1/2}f_1+X_{22}f_2\end{bmatrix}
=\begin{bmatrix}Wf_1+Uf_2\cr M^*W^{1/2}f_1+X_{22}f_2\end{bmatrix}
\end{multline*}
Since $\ran U\cap\ran W=\{0\}$ and $\ran X_{22}\cap\ran M^*=\{0\}$, we obtain
$$
\sM\cap\ran T^{1/2}=\sM^\perp\cap\ran T^{1/2}=\{0\},
$$
which yields representation \eqref{repr11}.
\end{proof}

The next statement follows from Corollary \ref{VAZZN}, Theorem \ref{obr2}, and \eqref{crab44}, \eqref{crab3}.
\begin{corollary}
\label{cledstv} Let $A\in\bB_{0}^+(\cH)$. Let $\sM$ be a subspace in $\cH$ such that
$\dim\sM=\dim{\sM^\perp}=\infty$. Then operator $A$ admits the lifting in the form \eqref{repr11} for the subspace
$\sM$ if and only if
\[
 A=\begin{bmatrix} W^2&WV\Phi\cr \Phi^*VW&\Phi^*V^2\Phi
\end{bmatrix}:\begin{array}{l}\sM\\\oplus\\ \sM^\perp\end{array}\to
\begin{array}{l}\sM\\\oplus\\ \sM^\perp\end{array},
\]
where
\begin{equation}
\label{otnos1}
\begin{array}{l} W\in\bB_{0}^+(\sM),
\;V\in\bB_{0}^+(\sM), \\
 \Phi\quad\mbox{unitarily maps}\quad\sM^\perp\quad\mbox{onto}\quad\sM,\\
\ran V\cap\ran W=\{0\},\\
\ran V\subset\ran W^{1/2}.
\end{array}
\end{equation}
Similarly, the operator $A$ admits the lifting in the form
\begin{equation}
\label{repr22}
\begin{array}{l}
 A=Q^{1/2}P_{\sM^\perp} Q^{1/2},\\
 Q\in\bB_{0}^+(\cH),\\
 \; \sM\cap\ran
Q^{1/2}=\sM^\perp\cap\ran Q^{1/2}=\{0\}
\end{array}
\end{equation}
if and only if
\begin{equation}
\label{otnos11}
\begin{array}{l}
\ran V\cap\ran W=\{0\},\\
\ran V^{1/2}\supset\ran W.
\end{array}
\end{equation}
Finally, the operator $A$ admits the lifting in the both forms \eqref{repr11} and \eqref{repr22} if and only if
\begin{equation}
\label{otnos2}
\begin{array}{l}
\ran V\cap\ran W=\{0\},\\
\ran W^{1/2}\supset\ran V,\\
 \ran V^{1/2}\supset\ran W.
\end{array}
\end{equation}
\end{corollary}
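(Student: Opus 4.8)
The plan is to derive all three equivalences from three ingredients already at hand: the block-matrix description of subspaces $\sM$ with $\sM\cap\ran A^{1/2}=\sM^\perp\cap\ran A^{1/2}=\{0\}$ provided by Corollary \ref{VAZZN} in its polar form \eqref{crab44}--\eqref{crab3}; the lifting criterion of Theorem \ref{obr2}; and the Douglas Theorem \ref{Douglas}. First I would settle the form \eqref{repr11}, then obtain \eqref{repr22} by the $\sM\leftrightarrow\sM^\perp$ symmetry, the last assertion being merely the conjunction of the two.

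For \eqref{repr11} I would begin by observing that \emph{both} sides of the asserted equivalence force $\sM$ to have trivial intersections with $\ran A^{1/2}$: the right-hand side does so by Corollary \ref{VAZZN} together with \eqref{crab44}--\eqref{crab3}, while if $A=T^{1/2}P_\sM T^{1/2}$ is a lifting then the Douglas theorem gives $\ran A^{1/2}=T^{1/2}\sM\subseteq\ran T^{1/2}$, and the requirements on $T$ in \eqref{repr11} then yield $\sM\cap\ran A^{1/2}=\sM^\perp\cap\ran A^{1/2}=\{0\}$. Thus on both sides we may assume $A$ satisfies \eqref{zeroint111} and, by Corollary \ref{VAZZN} and \eqref{crab44}, \eqref{crab3}, write
\[
A=\begin{bmatrix} W^2 & WV\Phi\\ \Phi^*VW & \Phi^*V^2\Phi\end{bmatrix},\qquad W,V\in\bB^+_0(\sM),\ \ran W\cap\ran V=\{0\},\ \Phi:\sM^\perp\to\sM\ \mbox{unitary}.
\]
Now I would invoke Theorem \ref{obr2}: with $A_{11}=W^2$ and $A_{12}=WV\Phi$ its criterion \eqref{raninc} reads $\ran(WV\Phi)\subseteq\ran W^{3/2}$, and since $\Phi$ is a unitary bijection onto $\sM$ this is $\ran(WV)\subseteq\ran W^{3/2}$. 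It remains to check that this is equivalent to $\ran V\subseteq\ran W^{1/2}$, i.e. to the extra clause of \eqref{otnos1}. One direction is immediate from the Douglas theorem ($V=W^{1/2}C$ with $C\in\bB(\sM)$ gives $WV=W^{3/2}C$); for the converse the Douglas theorem gives $WV=W^{3/2}D$ with $D\in\bB(\sM)$, and cancelling one injective factor $W$ — from $W(W^{1/2}Df)=W^{3/2}Df=WVf=W(Vf)$ and $\ker W=\{0\}$ — yields $W^{1/2}D=V$, whence $\ran V\subseteq\ran W^{1/2}$.

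For the form \eqref{repr22}, $A=Q^{1/2}P_{\sM^\perp}Q^{1/2}$, I would apply Theorem \ref{obr2} after interchanging $\sM$ and $\sM^\perp$: regarding $A$ as a block matrix on $\cH=\sM^\perp\oplus\sM$, its ``$(1,1)$-entry'' is $\Phi^*V^2\Phi$ and its ``$(1,2)$-entry'' is $\Phi^*VW$; since $\Phi$ is unitary one has $(\Phi^*V^2\Phi)^{3/4}=\Phi^*V^{3/2}\Phi$, so after stripping the unitaries the criterion \eqref{raninc} becomes $\ran(VW)\subseteq\ran V^{3/2}$, which by the same Douglas-theorem argument (now with the roles of $W$ and $V$ exchanged) is equivalent to $\ran W\subseteq\ran V^{1/2}$, the extra clause of \eqref{otnos11}. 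Finally, $A$ admits a lifting in both forms precisely when both extra conditions hold simultaneously, which together with the standing requirement $\ran W\cap\ran V=\{0\}$ is exactly \eqref{otnos2}. The only step I expect to demand any care is the Douglas-theorem reduction of the fractional-power inclusion $\ran(WV)\subseteq\ran W^{3/2}$ to $\ran V\subseteq\ran W^{1/2}$, which rests entirely on the non-singularity of $W$ (and symmetrically of $V$); once this is in place, what remains is routine bookkeeping with the polar block form and with conjugation by the unitary $\Phi$.
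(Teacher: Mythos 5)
Your proposal is correct and follows exactly the route the paper intends, which states that the corollary ``follows from Corollary \ref{VAZZN}, Theorem \ref{obr2}, and \eqref{crab44}, \eqref{crab3}'': you use the lifting identity and \eqref{usef} to reduce both sides to the trivial-intersection situation, invoke the polar block form, and translate the criterion \eqref{raninc} via Douglas' theorem. The only detail the paper leaves implicit --- that $\ran(WV)\subset\ran W^{3/2}$ is equivalent to $\ran V\subset\ran W^{1/2}$ by cancelling the injective factor $W$ (and symmetrically with $V$), together with commuting the fractional power past the unitary $\Phi$ --- is supplied correctly in your argument.
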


One can resume the above observations as following:\\  Let $W\in\bB_{0}^+(\sM)$ with $\ran W\ne \sM.$
Then there exists a subspace
$\sL\subset\sM$ such that
\begin{equation}
\label{l-inter} \sL \cap \ran W^{1/2}=\sL^\perp \cap \ran
W^{1/2}=\{0\}.
\end{equation}
(a) Define the operator
\[
V_1:=W^{1/2}P_\sL W^{1/2}.
\]
Then one obtains that
$$\ran V^{1/2}_1\cap\ran W=\{0\} \ \   {\rm{and}}  \ \ \ran V_1\subset\ran W^{1/2},$$
i.e., the operator $V_1$ satisfies \eqref{otnos1}, but it does \textit{not} satisfy \eqref{otnos11}.
This means that for any unitary mapping $\Phi$ of $\sM^\perp$ onto $\sM$ the operator
\[
A_1:=\begin{bmatrix} W^2&WV_1\Phi\cr \Phi^*V_1W&\Phi^*V^2_1\Phi
\end{bmatrix}:\begin{array}{l}\sM\\\oplus\\ \sM^\perp\end{array}\to
\begin{array}{l}\sM\\\oplus\\ \sM^\perp\end{array}
\]
admits the lifting \eqref{repr11} by $T$, but it does not admit the lifting by $Q$ in the form \eqref{repr22}.\\
(b) {Let us define
\[
 V_2:=W^{1/2}(I+P_\sL)W^{1/2}.
\]
Using \eqref{usef} and the equality $\ran (I+P_\sL)=\sM$, we get
that $\ran V^{1/2}_2=\ran W^{1/2}.$ On the other hand if $V_2x=Wy$,
then $(I+P_\sL)W^{1/2}x=W^{1/2}y$. It follows that $P_\sL
W^{1/2}x=W^{1/2}(y-x)$. Condition \eqref{l-inter} yields that
$y=x=0$. Hence, $\ran V_2\cap\ran W=\{0\}$, i.e., the operator $V_2$
satisfies \eqref{otnos2}. Consequently, the operator
$$A_2:=\begin{bmatrix} W^2&WV_2\Phi\cr \Phi^*V_2W&\Phi^*V^2_2\Phi
\end{bmatrix}:\begin{array}{l}\sM\\\oplus\\ \sM^\perp\end{array}\to
\begin{array}{l}\sM\\\oplus\\ \sM^\perp\end{array},$$
admits the lifting in the form \eqref{repr11} and in the form \eqref{repr22} for any unitary $\Phi$
}.\\
(c) {Choose $V\in\bB^+_0(\sM)$ such that $\ran V^{1/2}\cap\ran W^{1/2}=\{0\}$. Then operator $V$ satisfies the condition
$\ran V\cap\ran W=\{0\}$, see \eqref{crab3}, but it does \textit{not} satisfies both conditions \eqref{otnos1} and
\eqref{otnos11}. Therefore, operator $A$ does \textit{not} admits the lifting in the form \eqref{repr11}
\textit{and} in the form \eqref{repr22}.} This example indicates a \textit{limit} for application of our method.

\subsection{Applications to unbounded operators}\label{Appl}

First we present here an extended version and a new proof of the Schm\"{u}dgen  Theorem \ref{SchmTh1}. The both
follow from our results in Section \ref{triv-intersec} and \ref{LiftOper}.

\begin{theorem} \label{111} {Let $H$ be a closed unbounded densely defined linear operator in a Hilbert
space $\cH$. Then
\begin{enumerate}
\item
there exists a subspace $\sM$ of $\cH$ such that
\begin{equation}
\label{novdok} \sM\cap\dom H=\sM^\perp\cap\dom H=\{0\} \ ,
\end{equation}
moreover, there exists uncountably many of them ,
\item there exists a fundamental symmetry $J$ in $\cH$ such that
\begin{equation}
\label{novfund}
J \, \dom H \cap \dom H= \{0\},
\end{equation}
moreover, there exists uncountably many of them.
\end{enumerate}
}
\end{theorem}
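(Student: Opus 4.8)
The plan is to reduce Theorem \ref{111} to the bounded situation already treated in Section \ref{triv-intersec}. Since $H$ is closed, densely defined and unbounded, its domain $\dom H$ equipped with the graph norm is a Hilbert space, and more to the point, $\dom H = \dom |H| = \ran(|H|+I)^{-1}$ where $|H|=(H^*H)^{1/2}$. Thus $\cR := \dom H$ is an operator range: setting $B := (|H|+I)^{-1} \in \bB^+_0(\cH)$ we have $\ran B = \dom H$. Because $H$ is unbounded, $|H|$ is unbounded, hence $B$ is not boundedly invertible, i.e. $\ran B \ne \cH$; and since $H$ is densely defined, $\cR$ is dense in $\cH$. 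So $\cR$ is a non-closed, dense operator range.

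For part (1): I would now invoke Theorem \ref{SchmTh3} (equivalently, Theorem \ref{SchmTh333} applied to any $A\in\bB^+_0(\cH)$ with $\ran A^{1/2}=\cR$) to produce a subspace $\sM\subset\cH$ with $\sM\cap\cR = \sM^\perp\cap\cR = \{0\}$, which is precisely \eqref{novdok}. To get that there are uncountably many such $\sM$, one must exhibit $\cR$ as $\ran A^{1/2}$ for a suitable $A\in\bB^+_0(\cH)$ and then quote the "continuum set of subspaces" conclusion of Theorem \ref{SchmTh333} directly; concretely, take $A := B^2 = (|H|+I)^{-2}$, so $\ran A^{1/2} = \ran B = \cR$, and Theorem \ref{SchmTh333} gives a continuum of subspaces $\sM$ with $\sM\cap\ran A^{1/2}=\sM^\perp\cap\ran A^{1/2}=\{0\}$, i.e. satisfying \eqref{novdok}.

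For part (2): this is immediate from Proposition \ref{dobavl} applied to the non-closed linear manifold $\cR = \dom H$. Indeed, each orthogonal projection $P$ with $\ran P\cap\cR = \ran(I-P)\cap\cR=\{0\}$ — that is, each $\sM=\ran P$ furnished by part (1) — yields the fundamental symmetry $J := 2P-I$ with $J\cR\cap\cR=\{0\}$, which is \eqref{novfund}; and conversely. Since the correspondence $\sM = \ran P \leftrightarrow J = 2P - I$ is a bijection between the projections and the fundamental symmetries, the uncountably many subspaces from part (1) produce uncountably many such $J$.

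I do not anticipate a serious obstacle here: the entire content has been prepared in the earlier sections. The only point requiring a little care is the very first step — verifying cleanly that $\dom H$ is a dense, non-closed operator range for an \emph{arbitrary} closed densely defined unbounded $H$ (not just self-adjoint or non-negative $H$); this uses only $\dom H = \dom|H|$ via the polar decomposition $H = U|H|$ together with $H$ unbounded $\Leftrightarrow |H|$ unbounded $\Leftrightarrow \ran(|H|+I)^{-1}\ne\cH$, as already recorded in the Preliminaries. Everything else is a direct citation of Theorems \ref{SchmTh333}, \ref{SchmTh3} and Proposition \ref{dobavl}.
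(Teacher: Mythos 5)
Your proposal is correct and is essentially the paper's own argument: the paper sets $A=(H^*H+I)^{-1}$ so that $\ran A^{1/2}=\dom H$ and then cites Theorems \ref{SchmTh333}, \ref{SchmTh3} (and Proposition \ref{11}) for the uncountable family of subspaces and Proposition \ref{dobavl} for the fundamental symmetries, exactly as you do. Your choice $A=(|H|+I)^{-2}$ instead of $(H^*H+I)^{-1}$ is an immaterial variation, since both are injective, non-surjective, non-negative bounded operators whose square root has range $\dom H$.
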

\begin{proof}
Let $A=(H^*H+I)^{-1}$. Then $A\in\bB^+(\cH)$ and $\cR:=\ran A^{1/2}=\dom H.$ By Theorem \ref{SchmTh333},
Proposition \ref{11}, and Theorem \ref{SchmTh3} there exists uncountable set of subspaces
$\sM $ of $\cH$ satisfying \eqref{novdok}. {Therefore, combining this observation with Proposition \ref{dobavl} we
deduce that there exists uncountable set of fundamental symmetries $J$ satisfying \eqref{novfund}. }
\end{proof}

Note that by virtue of Theorem \ref{SchmTh3} and Proposition \ref{11} there exists a one-parameter family
$\{\sM_t\}_{t\in \mathbb{R}}$ of subspaces {and operators $\{J_t:= (2 P_{\sM_t} - I)\}_{t\in \mathbb{R}}$ satisfying
respectively \eqref{novdok} and \eqref{novfund}.}
{Then besides Proposition \ref{dobavl} we can formulate the following version of the von Neumann Theorem \ref{NeumTh1}.}
\begin{corollary}
\label{Neum Th11} {For any unbounded self-adjoint operator $H$ in a Hilbert space $\cH$ there exists
a one-parameter family $\{U_t\}_{t\in \mathbb{R}}$ of unitary operators with the property
\[
\dom H\cap\dom (U_t ^*H U_t)=\{0\} \ .
\]
Moreover, one can find a strongly continuous family $\{J_t\}_{t\in \mathbb{R}}$ of fundamental symmetries
(i.e. self-adjoint and unitary operators) such that
\[
\dom H\cap\dom (J_t H J_t)=\{0\} \ .
\]
}
\end{corollary}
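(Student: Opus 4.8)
The plan is to reduce everything to the operator range $\cR:=\ran A^{1/2}=\dom H$, where $A:=(H^{*}H+I)^{-1}=(H^{2}+I)^{-1}$. Exactly as in the proof of Theorem \ref{111}, the operator $H=H^{*}$ being unbounded and densely defined forces $A\in\bB^{+}_{0}(\cH)$ with $\cR$ a non-closed dense operator range. The elementary remark underlying all three conclusions is that for any bounded, boundedly invertible $S\in\bB(\cH)$ one has $\dom(S^{*}HS)=\{f\in\cH:Sf\in\dom H\}=S^{-1}\cR$, so the assertion $\dom H\cap\dom(S^{*}HS)=\{0\}$ is precisely $\cR\cap S^{-1}\cR=\{0\}$.

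For the family of unitaries I would invoke Corollary \ref{coro1}, which supplies a continuous one-parameter unitary group $\{V_{t}\}_{t\in\dR}$ with $V_{s}\cR\cap V_{t}\cR=\{0\}$ for $s\ne t$; by the group property this is the same as $\cR\cap V_{r}\cR=\{0\}$ for all $r\ne 0$. Taking $S=V_{t}$ (so $S^{-1}=V_{-t}$) the remark above gives $\dom H\cap\dom(V_{t}^{*}HV_{t})=\cR\cap V_{-t}\cR=\{0\}$ for every $t\ne 0$, and reindexing by $U_{t}:=V_{e^{t}}$ produces a family genuinely parametrised by all of $\dR$.

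For the strongly continuous family of fundamental symmetries I would re-run the construction inside the proof of Theorem \ref{SchmTh333}: choose $B\in\bB^{+}_{0}(\cH)$ with $\ran B^{1/2}\cap\cR=\{0\}$ by Theorem \ref{NeumTh2}, and for $x>0$ take the orthogonal projection $P(x)$ from \eqref{potx}, which satisfies $\ran P(x)\cap\cR=\ran(I-P(x))\cap\cR=\{0\}$ and, by the continuity argument recorded immediately after the proof of Theorem \ref{SchmTh333}, is strongly continuous in $x\in(0,+\infty)$. Then $J_{x}:=2P(x)-I$ is a fundamental symmetry depending strongly continuously on $x$; Proposition \ref{dobavl} converts the two trivial-intersection properties of $P(x)$ into $J_{x}\cR\cap\cR=\{0\}$, and since $J_{x}=J_{x}^{*}=J_{x}^{-1}$ the remark above yields $\dom H\cap\dom(J_{x}HJ_{x})=\cR\cap J_{x}\cR=\{0\}$. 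Re-parametrising by $J_{t}:=J_{e^{t}}$ gives the required family over $\dR$; note that, $J_{t}$ being unitary, it can also serve as the family $\{U_{t}\}$ in the first assertion.

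I expect no real obstacle: the corollary is essentially a repackaging of Corollary \ref{coro1}, Theorem \ref{SchmTh333}, Proposition \ref{dobavl}, and the strong-continuity remark. The only items needing a line of justification are the domain identity $\dom(S^{*}HS)=S^{-1}\dom H$ for bounded boundedly invertible $S$ (immediate, since $S$ and $S^{-1}$ are everywhere defined) and the cosmetic point of having the families indexed by all of $\dR$ rather than by $\dR\setminus\{0\}$ or $(0,+\infty)$, handled by the reparametrisations; strong continuity of $J_{t}$ then follows at once from that of $P(x)$, which is already established in the excerpt.
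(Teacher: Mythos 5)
Your proposal is correct and follows essentially the paper's own route: the reduction $\dom H=\ran A^{1/2}$ with $A=(H^{2}+I)^{-1}$, Corollary~\ref{coro1} for the unitary family, and fundamental symmetries $J=2P-I$ handled through Proposition~\ref{dobavl}, together with the observation $\dom(S^{*}HS)=S^{-1}\dom H$. Your only deviation is to build the strongly continuous family from the projections $P(x)$ of Theorem~\ref{SchmTh333} rather than from the family $P_{\sM_t}$ of Proposition~\ref{11} cited in the paper's preamble, which is in fact the sounder choice, since the strong continuity is established in the paper precisely for $P(x)$.
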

\begin{corollary}
\label{1111}  Let $T_1, \ldots, T_n$ be closed unbounded densely
defined linear operators in a Hilbert space $\cH$. Then there exists
infinitely many subspaces $\sM$ in $\cH$ such that
\[
 \sM\cap\left(\sum\limits_{j=1}^n\dom T_k\right)
 =\sM^\perp\cap\left(\sum\limits_{j=1}^n\dom T_k\right)=\{0\}.
\]
In particular
\[
\sM\cap\dom T_j=\sM^\perp\cap\dom T_j=\{0\}\;\mbox{for all}\;
j=1,\ldots, n.
\]
\end{corollary}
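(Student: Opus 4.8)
The plan is to reduce the corollary to Theorems \ref{SchmTh3} and \ref{SchmTh333}, applied to the linear manifold
\[
\cR:=\sum_{j=1}^{n}\dom T_j .
\]
First I would record that $\cR$ is a dense operator range: density holds because $\cR\supseteq\dom T_1$ and $T_1$ is densely defined, while the fact that $\cR$ is an operator range is exactly the statement made just after \eqref{usef} in Section \ref{Prel} --- since each $T_j$ is closed and densely defined, $\dom T_1+\dots+\dom T_n$ is the domain of a single closed linear operator $L$ in $\cH$. Next, a dense operator range is non-closed exactly when it is a proper subset of $\cH$; one must therefore read the corollary under the assumption $\cR\ne\cH$, which is in any case forced, since $\sum_{j}\dom T_j=\cH$ would make the conclusion impossible. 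Granting $\cR\ne\cH$, the operator $L$ is automatically unbounded.

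To produce the subspaces I would imitate the device from the proof of Theorem \ref{111}: put $A:=(L^{*}L+I)^{-1}$, so that $A\in\bB_{0}^{+}(\cH)$, $\ran A=\dom(L^{*}L)\subseteq\cR\ne\cH$, and $\ran A^{1/2}=\dom|L|=\dom L=\cR$. Theorem \ref{SchmTh333} then yields a continuum --- in particular infinitely many --- of subspaces $\sM\subset\cH$ with
\[
\sM\cap\cR=\sM^{\perp}\cap\cR=\{0\},
\]
and Proposition \ref{11} may be used to arrange them as a one-parameter family if desired. Since $\dom T_j\subseteq\cR$ for each $j$, every such $\sM$ satisfies
\[
\sM\cap\dom T_j\subseteq\sM\cap\cR=\{0\},\qquad
\sM^{\perp}\cap\dom T_j\subseteq\sM^{\perp}\cap\cR=\{0\},
\]
for $j=1,\dots,n$, which is the ``in particular'' clause, and the displayed identity in the corollary statement is just the line above rewritten with $\cR$ in place of $\sum_{j}\dom T_j$.

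I do not expect a genuine obstacle here: the only substantive point is the one in the first paragraph, namely that a finite sum of domains of closed operators is again an operator range --- so that the results of Sections \ref{triv-intersec}--\ref{LiftOper} become applicable --- together with the observation that this sum remains a proper (hence non-closed) subset of $\cH$. Once these are in place, the corollary follows in one step from Theorem \ref{SchmTh333}.
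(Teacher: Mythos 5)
Your proposal is correct and follows essentially the same route the paper intends for this corollary: realize $\sum_{j}\dom T_j$ as a dense operator range (the remark after \eqref{usef}, or directly as $\ran\bigl(\sum_{j}(T_j^*T_j+I)^{-1}\bigr)^{1/2}$ via \eqref{doug11}) and apply Theorem \ref{SchmTh333}, exactly as in the proof of Theorem \ref{111}. Your caveat that one must read in the hypothesis $\sum_{j}\dom T_j\ne\cH$ is well taken --- for $n\ge 2$ the unboundedness of each $T_j$ alone does not force it --- but with that tacit assumption, which the corollary's conclusion clearly presupposes, your argument is the paper's.
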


Let $T$ be non-negative self-adjoint operator in $\cH$. As it is well-known \cite{Ka}, \cite{Kr1} the sesquilinear form
$(Tu,v),$ $u,v\in\dom T$ admits a closure and  we (following Kre\u\i n \cite{Kr1}) denote this closure by $T[\cdot,\cdot]$
and its domain by $\cD[T]$ (see Subsection \ref{FrKR}). Then by the \textit{second representation theorem} \cite{Ka}
\[
\cD[T]=\dom T^{1/2} \ \ {\rm{and}} \ \ T[f,g]=(T^{1/2}f,T^{1/2}g),\; f, g \in \dom T^{1/2}.
\]
The linear manifold $\cD[T](=\dom T^{1/2})$ is the Hilbert space
with respect to the graph inner product
\begin{equation} \label{inpr}
(f,g)_{T^{1/2}}=T[f,g]+(f,g).
\end{equation}

The fractional-linear transformation
\[
S=(I-T)(I+T)^{-1},\; T=(I-S)(I+S)^{-1}
\]
gives a one-to-one correspondence between the set of all non-negative self-adjoint operators $T$ and
the set of all self-adjoint contractions $S$ such that $\ker (S+I)=\{0\}$, see \cite{Kr1}. Then one can easily derive
\cite{AHS2} that
\begin{equation}\label{CLOSFORM}
\begin{array}{l}
\cD[T ]=\ran(I+S)^{1/2},\\
T [u,v]=-(u,v)+2\left((I+S)^{-1/2}u,(I+S)^{-1/2}v\right), \quad
u,v\in \cD[T ].
\end{array}
\end{equation}

The next application of our approach is the following theorem and the corresponding remarks.
\begin{theorem} \label{appl1} Let $T$ be unbounded non-negative self-adjoint operator
in Hilbert space $\cH$. Then there are infinitely many pairs $\left
<T_1, T_2\right>$ of unbounded non-negative self-adjoint operators
such that
\begin{enumerate}
\item $\dom T^{1/2}_1\cap\dom T=\dom T^{1/2}_2\cap\dom T=\{0\};$
\item $\dom T^{1/2}_k\subset\dom T^{1/2},$  and $||T^{1/2}_k g||=||T^{1/2} g||$
for all $g\in\dom T^{1/2}_k$, $k=1,2$;
\item $\dom T^{1/2}_1\cap\dom T^{1/2}_2=\{0\}$;
\item the Hilbert space $\cD[T]$ admits the orthogonal decomposition $\cD[T]=\cD[T_1]\oplus_{T^{1/2}}\cD[T_2]$
with respect to the inner product (\ref{inpr}).
\end{enumerate}
\end{theorem}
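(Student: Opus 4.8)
The plan is to realize $T$ as an operator of the form $A^{-1}$ where $A=(T+I)^{-1}\in\bB_0^+(\cH)$ with $\ran A\ne\cH$, pass to the square root, and then use the machinery of Sections \ref{triv-intersec}--\ref{LiftOper}. More precisely, set $A:=(T+I)^{-1}$ so that $\ran A^{1/2}=\dom T^{1/2}=\cD[T]$ by the second representation theorem, and this operator range is dense and non-closed because $T$ is unbounded. By Theorem \ref{SchmTh333} there is a continuum of subspaces $\sM\subset\cH$ with $\sM\cap\ran A^{1/2}=\sM^\perp\cap\ran A^{1/2}=\{0\}$; fix any one of them, with orthogonal projection $P_\sM$. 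Then define $A_1:=A^{1/2}P_\sM A^{1/2}$ and $A_2:=A^{1/2}P_{\sM^\perp}A^{1/2}$, and put $T_k:=A_k^{-1}$. By Proposition \ref{shm01}(1)(a), $\ker A_k=\{0\}$, so $A_k\in\bB_0^+(\cH)$; the identity $A=A_1+A_2$ together with $\ran A_k\ne\ran A$ (which follows from $\sM\cap\ran A^{1/2}=\{0\}$ and $\sM^\perp\cap\ran A^{1/2}=\{0\}$ via Proposition \ref{shm01}) forces $\ran A_k\ne\cH$, hence each $T_k$ is unbounded non-negative self-adjoint.

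Next I would translate each of the four claims into the language of operator ranges. Since $\dom T_k^{1/2}=\ran A_k^{1/2}=A^{1/2}\sM$ (respectively $A^{1/2}\sM^\perp$) by the Douglas Theorem \ref{Douglas}, claim (1) is the assertion
\[
A^{1/2}\sM\cap\ran A=A^{1/2}\sM^\perp\cap\ran A=\{0\},
\]
which is exactly the content of the second equivalence in Proposition \ref{shm01}(1)(a) applied to $A_1$ and, symmetrically (replacing $\sM$ by $\sM^\perp$), to $A_2$. For claim (2), the inclusion $\dom T_k^{1/2}=\ran A_k^{1/2}\subset\ran A^{1/2}=\dom T^{1/2}$ is immediate since $A_k\le A$; the isometry $\|T_k^{1/2}g\|=\|T^{1/2}g\|$ for $g\in\dom T_k^{1/2}$ is precisely formula \eqref{restr} of Proposition \ref{shm01}(1)(b), once we observe that $T_k^{1/2}=(A_k^{1/2})^{-1}$ on $\ran A_k^{1/2}$ and $T^{1/2}=(A^{1/2})^{-1}$ on $\ran A^{1/2}$. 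Claim (3) is $A^{1/2}\sM\cap A^{1/2}\sM^\perp=\{0\}$; since $A^{1/2}$ is injective, this is equivalent to $\sM\cap\sM^\perp=\{0\}$, which is trivial. In fact one gets more: $\ran A_1^{1/2}\dot+\ran A_2^{1/2}=\ran A^{1/2}$ because $A^{1/2}(\sM\dot+\sM^\perp)=A^{1/2}\cH=\ran A^{1/2}$ and $A^{1/2}$ preserves the directness of the sum, giving the decomposition $\cD[T]=\cD[T_1]\dot+\cD[T_2]$ as linear manifolds.

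The one genuinely substantive point is claim (4): that this algebraic direct sum is in fact \emph{orthogonal} in the graph inner product $(f,g)_{T^{1/2}}=T[f,g]+(f,g)$ on $\cD[T]$. Here I would compute directly: for $u\in\cD[T_1]=A^{1/2}\sM$ and $v\in\cD[T_2]=A^{1/2}\sM^\perp$, write $u=A^{1/2}\xi$, $v=A^{1/2}\eta$ with $\xi\in\sM$, $\eta\in\sM^\perp$. Then $T^{1/2}u=A^{-1/2}u=\xi$ and $T^{1/2}v=\eta$, so $T[u,v]=(T^{1/2}u,T^{1/2}v)=(\xi,\eta)=0$ by orthogonality of $\sM$ and $\sM^\perp$. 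It remains to handle the $(u,v)$ term; but the correct statement is that $\cD[T]$ is orthogonally decomposed with respect to the form $T[\cdot,\cdot]$ itself (equivalently the seminorm $\|T^{1/2}\cdot\|$), which is what "$\oplus_{T^{1/2}}$" denotes in the theorem, and the computation above shows $T[u,v]=0$ exactly. The completeness/closedness needed to call this an orthogonal \emph{sum} of Hilbert spaces follows because $\cD[T_k]=\dom T_k^{1/2}$ is complete in its own graph norm and, by the isometry in claim (2), this norm agrees on $\cD[T_k]$ with $(\|T^{1/2}\cdot\|^2+\|\cdot\|^2)^{1/2}$; so each summand is closed in $\cD[T]$ and their form-orthogonal algebraic direct sum is all of $\cD[T]$. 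The main obstacle, then, is not any single hard estimate but rather keeping straight which inner product each orthogonality refers to, and verifying that the density and direct-sum statements at the level of $\ran A^{1/2}$ transfer cleanly through the unbounded inverse; the isometry \eqref{restr} from Proposition \ref{shm01} is the lever that makes all of this work, since it is what guarantees $T_k^{1/2}$ genuinely restricts $T^{1/2}$ rather than merely being dominated by it.
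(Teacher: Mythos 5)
Your construction is, up to normalisation, the same as the paper's: the paper works with $S=(I-T)(I+T)^{-1}$ and sets $I+S_k=(I+S)^{1/2}P(I+S)^{1/2}$, which (since $I+S=2(I+T)^{-1}=2A$) is exactly your $A_k=A^{1/2}P_\sM A^{1/2}$, $A^{1/2}P_{\sM^\perp}A^{1/2}$; see also Remark \ref{svezh}. The genuine error is your definition $T_k:=A_k^{-1}$: it is off by the identity, and this breaks claim (2). Since $A=(I+T)^{-1}$, the inverse of $A^{1/2}$ is $(I+T)^{1/2}$, \emph{not} $T^{1/2}$, so your identities ``$T^{1/2}=(A^{1/2})^{-1}$ on $\ran A^{1/2}$'' and ``$T^{1/2}u=A^{-1/2}u=\xi$'' are false. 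With $T_k=A_k^{-1}$, formula \eqref{restr} gives $\|T_k^{1/2}g\|=\|A_k^{-1/2}g\|=\|A^{-1/2}g\|=\|(I+T)^{1/2}g\|$, i.e. $\|T_k^{1/2}g\|^2=\|T^{1/2}g\|^2+\|g\|^2$, not $\|T^{1/2}g\|^2$ (indeed your $T_k\ge I$, so its form cannot be a restriction of the form of $T$). The cure is the paper's normalisation: set $T_k:=A_k^{-1}-I$, equivalently $(I+T_k)^{-1}=A^{1/2}P_\sM A^{1/2}$, which is non-negative because $A_k\le A\le I$ and unbounded because $\ran A_k^{1/2}=A^{1/2}\sM\subsetneq\cH$. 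Then \eqref{restr} gives $\|(I+T_k)^{1/2}g\|=\|(I+T)^{1/2}g\|$ on $\dom T_k^{1/2}=A^{1/2}\sM$, and subtracting $\|g\|^2$ yields exactly claim (2). Claims (1) and (3) and the algebraic direct sum are unaffected by the shift, since the domains $\dom T_k^{1/2}=\ran A_k^{1/2}$ are the same in either normalisation, and your reductions of these claims to Proposition \ref{shm01} and Theorem \ref{SchmTh333} are correct.

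There is a second, related slip in claim (4). The symbol $\oplus_{T^{1/2}}$ means orthogonality in the graph inner product \eqref{inpr}, i.e. $T[u,v]+(u,v)$, not in the bare form $T[\cdot,\cdot]$; your reinterpretation of the statement is a misreading, and in fact $T[u,v]\ne 0$ in general. For $u=A^{1/2}\xi$, $v=A^{1/2}\eta$ with $\xi\in\sM$, $\eta\in\sM^\perp$ one has $(I+T)^{1/2}u=\xi$, $(I+T)^{1/2}v=\eta$, hence $T[u,v]+(u,v)=\bigl((I+T)^{1/2}u,(I+T)^{1/2}v\bigr)=(\xi,\eta)=0$, while $T[u,v]=-(A\xi,\eta)$ and $(u,v)=(A\xi,\eta)$ separately do not vanish (if they did, $A$ would be reduced by $\sM$, contradicting $\sM\cap\ran A^{1/2}=\{0\}$). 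So the orthogonality required in (4) does hold, but only through the corrected identification $(I+T)^{1/2}u=\xi$; as written, your computation proves a different (and false) statement. With these two corrections your argument coincides with the paper's proof, which carries out precisely this computation in the $S$-variables via \eqref{CLOSFORM}.
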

\begin{proof}
Let $S=(I-T)(I+T)^{-1}$ and let $\sM$ be a subspace in $\cH$ such that  (see Corollary \ref{111})
\[
\sM\cap\dom T^{1/2}=\sM^\perp\cap\dom T^{1/2}=\{0\}.
\]
We define
\begin{equation}
\label{defin}
S_1=(I+S)^{1/2}P_\sM(I+S)^{1/2}-I\ , \ \ S_2=(I+S)^{1/2}P_{{\sM}^\perp}(I+S)^{1/2}-I.
\end{equation}
The operators $S_1$ and $S_2$ are self-adjoint contractions with $\ker (S_k +I)=\{0\}$, $k=1,2.$ Let
\[
T_k=(I-S_k)(I+S_k)^{-1},\; k=1,2.
\]
Then $T_1$ and $T_2$ are non-negative self-adjoint operators.
Using \eqref{CLOSFORM} and \eqref{defin} we have
\begin{gather*}
\dom T^{1/2}=\ran (I+S)^{1/2},\\
\dom T^{1/2}_1=(I+S)^{1/2}\sM,\; \dom
T^{1/2}_2=(I+S)^{1/2}\sM^\perp.
\end{gather*}
Notice that by definitions
\[
\begin{array}{l}
\dom T^{1/2}_1\cap\dom T^{1/2}_2=\{0\},\\
\dom T^{1/2}_1\dot+\dom T^{1/2}_2=\dom T^{1/2}.
\end{array}
\]
Suppose $(I+S_1)^{1/2}u=(I+S)f$, i.e., $(I+S)^{1/2}x=(I+S)f$
for some $x\in \sM$. Hence $x=(I+S)^{1/2}f$. But $\ran
\sM\cap(I+S)^{1/2}=\{0\}$. This means that $u=f=0$ and, therefore, $\dom
T^{1/2}_1\cap\dom T=\{0\}$. Similarly  $\dom T^{1/2}_2\cap\dom T=\{0\}.$

From $I+S_1=(I+S)^{1/2}P_\sM(I+S)^{1/2}$
 we obtain
\[
(I+S_1)^{1/2}h=(I+S)^{1/2}\cU h,\; h\in H,
\]
where $\cU$ is unitary operator from $\cH$ onto $\sM(=\ran P_\sM)$.
Hence
\[
(I+S)^{-1/2}g=\cU (I+S_1)^{-1/2}g\quad\mbox{for all}\quad
g\in\ran(I+S_1)^{1/2}=\dom T^{1/2}_1.
\]
Thus,
\begin{equation}
\label{vspom11}
||(I+S_1)^{-1/2}g||^2=||(I+S)^{-1/2}g||^2,\;g\in\ran(I+S_1)^{1/2}.
\end{equation}
Now \eqref{CLOSFORM} and \eqref{vspom11} yield
$||T^{1/2}_1g||=||T^{1/2} g||$ for all $g\in\dom T^{1/2}_1.$
Similarly $||T^{1/2}_2g||=||T^{1/2} g||$ for all $g\in\dom
T^{1/2}_2.$

Let $u\in\dom T^{1/2}_1$, $v\in\dom T^{1/2}_2$. Then
\[
\begin{array}{l}
u=(I+S)^{1/2}f, \;f\in\sM,\\
v=(I+S)^{1/2}h, \;h\in\sM^\perp.
\end{array}
\]
From  \eqref{inpr} and \eqref{CLOSFORM} we get $(u,v)_{T^{1/2}}=0.$
This yields the orthogonal decomposition
$\cD[T]=\cD[T_1]\oplus_{T^{1/2}}\cD[T_2]$ and the proof is
completed.
\end{proof}
\begin{remark} \label{svezh}
From the proof one can also find the expressions of $T_1$ and $T_2$ via $T$:
\[
\begin{array}{l}
T_1=\left((I+T)^{-1/2}P_\sM(I+T)^{-1/2}\right)^{-1}-I,\\
T_2=\left((I+T)^{-1/2}P_{\sM^\perp}(I+T)^{-1/2}\right)^{-1}-I.
\end{array}
\]
Then
\[
(I+T_1)^{-1}+(I+T_2)^{-1}=(I+T)^{-1}.
\]
This means that any vector $f_T\in\dom T$ admits a unique
decomposition
\[
f_T=f_{T_1}+f_{T_2},
\]
where $f_{T_1}\in\dom T_1$ and $f_{T_2}\in\dom T_2$, although
\[
\dom T\cap\dom T_1=\dom T\cap\dom T_2=\{0\}.
\]
Here $f_{T_1}=(I+T_1)^{-1}(I+T)f_T$, $f_{T_2}=(I+T_2)^{-1}(I+T)f_T$.
 In addition, the
following equalities are valid:
\[
\begin{array}{l}
\dom T^{1/2}_1=(I+T)^{-1/2}\sM,\\
\dom T^{1/2}_2=(I+T)^{-1/2}\sM^\perp.
\end{array}
\]
\end{remark}
\begin{remark} \label{zamech}
{\rm{(a)}} {Theorem \ref{appl1} yields also that $\ker T_1=\ker
T_2=\{0\}$. Suppose that $f\neq 0$ and $T_1f=0$, then (2) implies $Tf=0$, i.e.,
$\ker T_1\subseteq\ker T$. This gives $\dom T^{1/2}_1\cap\dom T\ne\{0\}$, which contradicts to (1).}\\
{\rm{(b)}} The equalities \eqref{doug11} and \eqref{defin} imply that
\[ \ran T^{1/2}_{1}=\ran T^{1/2} + \dom T^{1/2}_2,\;\ran T^{1/2}_{2}=\ran T^{1/2} + \dom T^{1/2}_1.
\]
In particular
\[
\ran T^{1/2}_1\supseteq\ran T^{1/2},\;\ran T^{1/2}_2\supseteq\ran
T^{1/2}.
\]
\end{remark}
\begin{remark}\label{zamech-bis}
{\rm{(a)}} By the properties (2) and (3) the form-sum $T_k \dot + T$
is equal to operator $2T_k$, for $k=1,2$. Then the Lie-Trotter-Kato
product formula \cite{Ka1}, \cite{Ka2} implies the strong
convergence
\[
s-\lim\limits_{n\to\infty}\left(\exp({-tT/n})\exp({-tT_k/n})\right)^n=\exp({-2t
T_k}),\;t\ge 0,\; k=1,2.
\]
{\rm{(b)}} Since $\dom T^{1/2}_1\cap\dom T^{1/2}_2=\{0\}$, the Kato theorem \cite[Theorem 1]{Ka1} yields
\[
s-\lim\limits_{n\to\infty}\left(\exp({-tT_1/n})\exp({-tT_2/n})\right)^n=
s-\lim\limits_{n\to\infty}\left(\exp({-tT_2/n})\exp({-tT_1/n})\right)^n=0.
\]
\end{remark}
{Note also that Theorem \ref{appl1} and Remark \ref{svezh} yield the following statement.}
{\begin{corollary} \label{nov111}
Let $T$ be unbounded non-negative self-adjoint operator in Hilbert space $\cH$. Then for each natural number $n$ there
exists $n$ unbounded non-negative self-adjoint operators $\{T_k\}_{k=1}^{n}$ such that
\begin{enumerate}
\item $\dom T^{1/2}_k\cap\dom T=\{0\},$ $k=1,2,\ldots,n,$
\item if $k\ne j$, then $\dom T^{1/2}_k\cap\dom T^{1/2}_j=\{0\}$,
\item the form $T_k[\cdot,\cdot]$ is a closed restrictions of the
form $T[\cdot,\cdot],$
\item
$\cD[T]=\cD[T_1]\oplus_{T^{1/2}}\cD[T_2]\oplus_{T^{1/2}}\cdots\oplus_{T^{1/2}}\cD[T_n],$
\item $(T+I)^{-1}=(T_1+I)^{-1}+(T_2+I)^{-1}+\cdots+(T_n+I)^{-1}.$
\end{enumerate}
\end{corollary}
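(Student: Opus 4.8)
The plan is to iterate the construction of Theorem~\ref{appl1} and Remark~\ref{svezh}. Recall that Theorem~\ref{appl1} produces, from any subspace $\sM$ with $\sM\cap\dom T^{1/2}=\sM^\perp\cap\dom T^{1/2}=\{0\}$, a pair $\langle T_1,T_2\rangle$ with $(I+T_1)^{-1}=(I+T)^{-1/2}P_\sM(I+T)^{-1/2}$ and $(I+T_2)^{-1}=(I+T)^{-1/2}P_{\sM^\perp}(I+T)^{-1/2}$, each with the trivial--intersection and norm--preservation properties (1)--(3), and with the orthogonal form--decomposition (4). The key observation is that $T_1$ is again an unbounded non-negative self-adjoint operator whose form domain is $\cD[T_1]=(I+T)^{-1/2}\sM$, a \emph{proper dense operator range} sitting inside $\cD[T]$, and on which the relevant graph inner products coincide. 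Hence I can apply Theorem~\ref{appl1} again \emph{to $T_1$ in place of $T$}, using a subspace $\sM_2\subset\cH$ with $\sM_2\cap\dom T_1^{1/2}=\sM_2^\perp\cap\dom T_1^{1/2}=\{0\}$ (such a subspace exists by Corollary~\ref{1111}, since $T_1$ is unbounded). This splits $T_1$ into a pair, and repeating $n-1$ times yields $n$ operators.

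Concretely, I would argue by induction on $n$. For $n=1$ take $T_1:=T$ (trivially (1)--(5) hold, with (1) vacuous). Assume the statement holds for some $n\ge 1$, giving operators $T_1,\dots,T_n$ with
\[
(T+I)^{-1}=\sum_{k=1}^n (T_k+I)^{-1},\qquad
\cD[T]=\bigoplus_{T^{1/2},\,k=1}^{n}\cD[T_k],
\]
the sums being orthogonal with respect to \eqref{inpr}. Apply Theorem~\ref{appl1} to the unbounded non-negative self-adjoint operator $T_n$: choosing a subspace $\sM'$ with $\sM'\cap\dom T_n^{1/2}=(\sM')^\perp\cap\dom T_n^{1/2}=\{0\}$, we obtain unbounded non-negative self-adjoint $T_n^{(1)},T_n^{(2)}$ with $\dom (T_n^{(i)})^{1/2}\cap\dom T_n=\{0\}$, $\dom(T_n^{(1)})^{1/2}\cap\dom(T_n^{(2)})^{1/2}=\{0\}$, $\|(T_n^{(i)})^{1/2}g\|=\|T_n^{1/2}g\|$ for $g\in\dom(T_n^{(i)})^{1/2}$, $\cD[T_n]=\cD[T_n^{(1)}]\oplus_{T_n^{1/2}}\cD[T_n^{(2)}]$, and by Remark~\ref{svezh} $(T_n^{(1)}+I)^{-1}+(T_n^{(2)}+I)^{-1}=(T_n+I)^{-1}$. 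Then the list $T_1,\dots,T_{n-1},T_n^{(1)},T_n^{(2)}$ has $n+1$ members; the resolvent identity (5) propagates immediately, and (3)--(4) follow because the graph norm of $T_n^{1/2}$ restricted to $\cD[T_n]$ equals the graph norm of $T^{1/2}$ (that is exactly property (2) of the inductive step), so the $T_n^{1/2}$-orthogonal splitting of $\cD[T_n]$ is automatically a $T^{1/2}$-orthogonal splitting, and it refines the previous decomposition.

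The main obstacles are the pairwise statements (1) and (2) for indices spanning \emph{different generations}. Property (2) at level $n+1$ requires $\|(T_n^{(i)})^{1/2}g\|=\|T^{1/2}g\|$; this follows by composing the two norm-preservation identities, since $\dom(T_n^{(i)})^{1/2}\subset\dom T_n^{1/2}\subset\dom T^{1/2}$ and on these nested domains the norms agree successively. For (1), I must check $\dom(T_n^{(i)})^{1/2}\cap\dom T=\{0\}$ and also, for the earlier operators, that $\dom T_k^{1/2}\cap\dom T=\{0\}$ is unaffected --- the latter is inherited unchanged. The cross-intersections $\dom(T_n^{(i)})^{1/2}\cap\dom T_k^{1/2}=\{0\}$ for $k<n$ and the $T^{1/2}$-orthogonality across generations both come from the refined orthogonal decomposition: $\cD[T_n^{(i)}]\subset\cD[T_n]$, which by the inductive hypothesis is $T^{1/2}$-orthogonal to each $\cD[T_k]$, $k<n$; two closed subspaces of a Hilbert space that are orthogonal intersect only in $\{0\}$, and any common vector of the \emph{form} domains that lies in $\dom T$ would, via property (2), have matching norms forcing it into the kernel, hence (as in Remark~\ref{zamech}(a)) be zero. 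I would spell out this last triviality-from-orthogonality-plus-norm-preservation step carefully, as it is the one genuinely non-formal point; everything else is bookkeeping on nested operator ranges.
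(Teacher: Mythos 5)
Your strategy -- iterate Theorem \ref{appl1} together with Remark \ref{svezh}, splitting one of the already constructed operators at each step and checking that the trivial-intersection, norm-preservation and resolvent identities propagate through the inclusions $\dom(T_n^{(i)})^{1/2}\subset\dom T_n^{1/2}\subset\dom T^{1/2}$ -- is exactly the route the paper intends (it offers no separate proof, merely the remark that Theorem \ref{appl1} and Remark \ref{svezh} yield the corollary), and the inductive step as you describe it does work. Two small repairs are needed. First, your base case is off: for $n=1$ the choice $T_1:=T$ does \emph{not} make (1) vacuous -- it violates it, since $\dom T^{1/2}\cap\dom T=\dom T\ne\{0\}$; indeed conditions (4)--(5) force $T_1=T$ when $n=1$, so the statement is only meaningful for $n\ge 2$ and the induction should simply start at $n=2$, which is Theorem \ref{appl1} plus Remark \ref{svezh} verbatim. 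Second, your justification of the cross-generation property (1) is garbled: the phrase about ``matching norms forcing it into the kernel'' and the appeal to Remark \ref{zamech}(a) run the implication backwards (that remark \emph{uses} (1) to deduce $\ker T_1=\{0\}$, not conversely). The correct argument is the one-line inclusion you already have on the page: $\dom(T_n^{(i)})^{1/2}\subseteq\dom T_n^{1/2}$ and, by the inductive hypothesis, $\dom T_n^{1/2}\cap\dom T=\{0\}$, hence $\dom(T_n^{(i)})^{1/2}\cap\dom T=\{0\}$; likewise the pairwise intersections with the older $\cD[T_k]$ follow from $\cD[T_n^{(i)}]\subseteq\cD[T_n]$ and $\cD[T_n]\cap\cD[T_k]=\{0\}$, with orthogonality in $(\cdot,\cdot)_{T^{1/2}}$ (and property (3)) obtained, as you say, by polarizing the composed norm identities. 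With these adjustments the proof is complete and coincides with the paper's.
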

}
\begin{theorem} \label{obratn}
Let {$T_1$} be unbounded non-negative self-adjoint operator in $\cH$ with $\ker T_1=\{0\}$. Then one can always find two
non-negative self-adjoint operators {$T_2$ and $T$} such that conditions (1)--(4) of Theorem \ref{appl1} are satisfied.
\end{theorem}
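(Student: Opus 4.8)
The plan is to \emph{reverse} the construction used in the proof of Theorem~\ref{appl1}. Put $A_1:=2(I+T_1)^{-1}$; since $\ker T_1=\{0\}$ and $T_1$ is unbounded, $A_1\in\bB_{0}^{+}(\cH)$, $0\le A_1\le 2I$, $\ran A_1=\dom T_1\ne\cH$, $\ran A_1^{1/2}=\dom T_1^{1/2}$, and $A_1-I=(I-T_1)(I+T_1)^{-1}$. Set $C:=2I-A_1$; then also $C\in\bB_{0}^{+}(\cH)$ (a vector in $\ker C$ would lie in $\ker T_1$). The key step, carried out below, is to exhibit an operator
\[
B\in\bB_{0}^{+}(\cH),\qquad B\le C,\qquad \ran B^{1/2}\cap\ran A_1^{1/2}=\{0\}.
\]
Granting this, put $A:=A_1+B\in\bB_{0}^{+}(\cH)$, so that $A\le A_1+C=2I$; since $\ker A_1=\ker B=\{0\}$ and $A_1:B=0$ (Proposition~\ref{root}), Proposition~\ref{shm1}(1) produces an orthogonal projection $P_\sM$ with $A_1=A^{1/2}P_\sM A^{1/2}$, $B=A^{1/2}P_{\sM^\perp}A^{1/2}$, and $\sM\cap\ran A^{1/2}=\sM^\perp\cap\ran A^{1/2}=\{0\}$. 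One then sets $S:=A-I$ (a self-adjoint contraction with $\ker(I+S)=\{0\}$), $T:=(I-S)(I+S)^{-1}$, $S_2:=A^{1/2}P_{\sM^\perp}A^{1/2}-I$, $T_2:=(I-S_2)(I+S_2)^{-1}$; by Proposition~\ref{shm01} together with $0\le A^{1/2}P_{\sM^\perp}A^{1/2}\le A\le 2I$ these are well defined non-negative self-adjoint operators, and $T$, $T_2$ are unbounded because $\{0\}\ne\sM\ne\cH$ forces $\ran A^{1/2}\ne\cH$ and $A^{1/2}\sM^\perp\ne\cH$. Finally $S_1:=A^{1/2}P_\sM A^{1/2}-I=A_1-I=(I-T_1)(I+T_1)^{-1}$, whence $(I-S_1)(I+S_1)^{-1}=T_1$; since moreover $\dom T^{1/2}=\ran(I+S)^{1/2}=\ran A^{1/2}$, the data $\langle S,\sM,(S_1,S_2)\rangle$ is \emph{exactly} that occurring in the proof of Theorem~\ref{appl1} for the operator $T$ and the given $T_1$. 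Hence properties (1)--(4) of Theorem~\ref{appl1} for $\langle T,T_1,T_2\rangle$ follow by repeating that proof word for word.

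So everything reduces to the construction of $B$, and this is the only genuine difficulty: $B\le C$ forces $\ran B^{1/2}\subseteq\ran C^{1/2}$ by the Douglas Theorem~\ref{Douglas}, while $\ran B^{1/2}$ must stay disjoint from $\ran A_1^{1/2}$ even though $\ran A_1^{1/2}\cap\ran C^{1/2}$ is \emph{not} trivial (in fact dense in $\cH$). I would resolve this by applying the von Neumann Theorem~\ref{NeumTh2} \emph{inside} the operator range $\cG:=\ran C^{1/2}$, regarded as a Hilbert space with the range inner product, for which $\Phi:=C^{1/2}:\cH\to\cG$ is unitary. Using $A_1^{1/2}=\sqrt2\,(I+T_1)^{-1/2}$ and $C^{1/2}=\sqrt2\,T_1^{1/2}(I+T_1)^{-1/2}$ one computes
\[
\Phi^{-1}\bigl(\ran A_1^{1/2}\cap\ran C^{1/2}\bigr)=\{f\in\cH:\ C^{1/2}f\in\dom T_1^{1/2}\}=(I+T_1)^{1/2}\dom T_1=\dom T_1^{1/2},
\]
so the manifold $\cD:=\ran A_1^{1/2}\cap\ran C^{1/2}$ is a dense, \emph{non-closed} operator range in $\cG$ (non-closed because $T_1^{1/2}$ is unbounded). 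Theorem~\ref{NeumTh2}, applied in $\cG$, then gives a unitary $U$ on $\cG$ with $\cD\cap U\cD=\{0\}$. Put $\cS:=U\cD$: being a dense operator range in $\cG$ it is a dense operator range in $\cH$ as well, it is contained in $\ran C^{1/2}$, and $\cS\cap\ran A_1^{1/2}=\cS\cap\cD=\{0\}$. Writing $\cS=\ran D^{1/2}$ with $D\in\bB_{0}^{+}(\cH)$ and invoking the Douglas Theorem once more ($\ran D^{1/2}\subseteq\ran C^{1/2}$ yields $D\le\lambda C$ for some $\lambda>0$), the operator $B:=D/\lambda$ has all three required properties.

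The hard part is thus the single lemma producing $B$; the obstruction is precisely the tension between $\ran B^{1/2}\subseteq\ran C^{1/2}$ and $\ran B^{1/2}\cap\ran A_1^{1/2}=\{0\}$, and it is removed by the observation that $\ran A_1^{1/2}\cap\ran C^{1/2}$, although dense in $\cH$, is a \emph{proper} dense non-closed operator range \emph{inside} $\cG=\ran C^{1/2}$, so that von Neumann's theorem applied within $\cG$ yields a dense subrange of $\ran C^{1/2}$ ``rotated away'' from $\ran A_1^{1/2}$. I have suppressed only routine verifications: that $S$ and $S_2$ are self-adjoint contractions with trivial $\ker(I+\cdot)$, that $\sM\ne\{0\},\cH$, and the domain identities in $\cG$, all of the same nature as the computations already carried out in Section~\ref{Appl}.
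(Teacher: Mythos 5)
Your proposal is correct, and its overall strategy coincides with the paper's: reverse the Cayley-type construction of Theorem \ref{appl1} by reducing everything to the production of an operator $B\in\bB_{0}^{+}(\cH)$ with $\ran B^{1/2}\cap\ran A_1^{1/2}=\{0\}$ and $A_1+B$ dominated by (a multiple of) $I$, then invoke Propositions \ref{root} and \ref{shm1} to obtain the projection $P_\sM$ and rerun the proof of Theorem \ref{appl1}; up to the factor $2$, your normalization $A_1=2(I+T_1)^{-1}$, $S=A-I$ is the paper's $A_1=\tfrac12(I+S_1)$, $S=2(A_1+B)-I$. Where you genuinely diverge is in the construction of $B$. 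The paper does it in one line: it takes $X\in\bB_{0}^{+}(\cH)$, $0\le X\le I$, with $\ran X^{1/2}\cap\ran A_1^{1/2}=\{0\}$ (von Neumann's Theorem \ref{NeumTh2} applied in $\cH$) and sets $B:=(I-A_1)^{1/2}X(I-A_1)^{1/2}$; this conjugation gives $B\le I-A_1$ for free, while the identity $\ran A_1^{1/2}\cap\ran(I-A_1)^{1/2}=\ran(A_1-A_1^2)^{1/2}$ together with $\ker(I-A_1)=\{0\}$ shows the trivial intersection with $\ran A_1^{1/2}$ is preserved. You instead transplant von Neumann's theorem into the range space $\cG=\ran C^{1/2}$ via the canonical unitary $C^{1/2}:\cH\to\cG$, identify $\ran A_1^{1/2}\cap\ran C^{1/2}$ with the dense non-closed operator range $\dom T_1^{1/2}$, rotate it off itself inside $\cG$, and return with Douglas' Theorem \ref{Douglas} to get $D\le\lambda C$. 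This works (your computations $\ran A_1^{1/2}=\dom T_1^{1/2}$, $\ran C^{1/2}=\ran T_1^{1/2}$ and $\Phi^{-1}(\cD)=\dom T_1^{1/2}$ check out, and the verifications you suppress are indeed routine), but it costs the range-space formalism and two invocations of Douglas where the paper's conjugation trick settles domination and disjointness of ranges simultaneously. What your detour buys is a transparent explanation of the obstruction and of \emph{where} von Neumann's theorem must be applied (inside $\ran C^{1/2}$ rather than in $\cH$); the paper's proof shows the detour can be bypassed entirely.
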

\begin{proof} Let
$$S_1:=(I-T_1)(I+T_1)^{-1},\; A_1=\cfrac{1}{2}\ (I+S_1).$$
Then $0\le A_1\le I$, $\ker A_1=\ker (I-A_1)=\{0\}$. By Theorem \ref{NeumTh2} there exists $X\in\bB_{0}^+(\cH)$ such
that $\ran X^{1/2}\cap\ran A^{1/2}_1=\{0\}$, $0\le X\le I$. Set
\[
B:=(I-A_1)^{1/2}X(I-A_1)^{1/2}.
\]
Then $\ker B=\{0\}$, $0\le A_1+ B\le I$, $\ker(A_1+ B)=\{0\}$,  and
the equalities
\begin{multline*}
 \ran B^{1/2}=(I-A_1)^{1/2}\ran X^{1/2},\;\ran (A_1-
A^{2}_1)^{1/2}=\ran A^{1/2}_1\cap\ran (I-A_1)^{1/2},\\
\ran X^{1/2}\cap\ran A^{1/2}_1=\{0\}
\end{multline*}
imply $\ran B^{1/2}\cap\ran A^{1/2}_1=\{0\}$.  Hence
\[
A_1=(A_1+B)^{1/2}P(A_1+B)^{1/2},\;B=(A_1+B)^{1/2}(I-P)(A_1+B)^{1/2} \ ,
\]
where $P$ is some  orthogonal projection in $\cH,$ see Proposition \ref{root}. Then define
\[
\begin{array}{l}
S:=2(A_1+B)-I, \; S_2:=2B-I,\\
T:=(I-S)(I+S)^{-1},\; T_2:=(I-S_2)(I+S_2)^{-1}.
\end{array}
\]
Since $I+S_1=(I+S)^{1/2}P(I+S)^{1/2}$ and
$I+S_2=(I+S)^{1/2}(I-P)(I+S)^{1/2}$, one follows arguments used in Theorem \ref{appl1} to complete the proof.
\end{proof}
The next statement is extension of Theorem \ref{appl1} to the  \textit{infinite} family of operator pairs.
\begin{theorem} \label{now11} Let $T$ be unbounded non-negative self-adjoint operator
in Hilbert space $\cH$. Then there are pairs of families
$\left<\{T_{1,j}\}_{j\in\dN},\{T_{2,k}\}_{k\in\dN}\right>$ of
unbounded non-negative self-adjoint operators possessing the
following properties:
\begin{enumerate}
\item $\dom T^{1/2}\supset\dom T^{1/2}_{1,1}\supset\dom T^{1/2}_{1,2}\supset\cdots\supset\dom T^{1/2}_{1,j}
\supset\cdots$,
\item $\bigcap\limits_{j\in\dN}\dom T^{1/2}_{1,j}=\{0\},$
\item
$\dom T^{1/2}_{1,j}\cap\dom T=\{0\}$ 
$( T_{1,0}=T_{2,0}=T),$
\item $\dom T^{1/2}_{2,1}\subset\dom T^{1/2}_{2,2}\subset\cdots\subset\dom T^{1/2}_{2,j}\subset\cdots
\subset\dom T^{1/2}$,
\item $\dom T^{1/2}_{2,j}\cap\dom T=\{0\}$ for all $j\in\dN$,
\item the sesquilinear forms $ T_{1,j}[\cdot,\cdot]$ and  $T_{2,j}[\cdot,\cdot]$ are closed restrictions of the form
$T[\cdot,\cdot]$ for each $j\in\dN$,
\item $\cD[T]=\cD[T_{1,j}]\oplus_{T^{1/2}}\cD[T_{2,j}]$ for each $j\in\dN$,
\item $s-\lim\limits_{j\to\infty}( T_{1,j}-\lambda I)^{-1}=0$, for $\lambda\in\dC\setminus\dR_+$,
\item $s-\lim\limits_{j\to\infty}(T_{2,j}-\lambda I)^{-1}=(T-\lambda I)^{-1}$, for $\lambda\in\dC\setminus\dR_+,$
\item $s-\lim\limits_{j\to\infty}\exp(-z T_{1,j})=0$ for all $z$, $\RE
z>0$, 
\item $s-\lim\limits_{j\to\infty}\exp(-z T_{2,j})=\exp(-zT)$ for all $z$, $\RE
z {\ge} 0$.
 \end{enumerate}
\end{theorem}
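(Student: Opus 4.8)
The plan is to combine the increasing chain of ``bad'' subspaces provided by Theorem~\ref{monot} with the pairing construction of Theorem~\ref{appl1}, and then to extract the limiting behaviour from the explicit resolvent formulas recorded in Remark~\ref{svezh}. First I would set $A:=(I+T)^{-1}\in\bB_{0}^{+}(\cH)$; since $T$ is unbounded, $\ran A=\dom T\ne\cH$ and $\ran A^{1/2}=\dom T^{1/2}$, so Theorem~\ref{monot} applied to $A$ yields an increasing sequence $\sN_{1}\subset\sN_{2}\subset\cdots$ of subspaces of $\cH$ with $\sN_{k}\cap\dom T^{1/2}=\sN_{k}^{\perp}\cap\dom T^{1/2}=\{0\}$ for every $k$, with $\bigcap_{k}\sN_{k}^{\perp}=\{0\}$, and with $s-\lim_{k}P_{\sN_{k}}=I_{\cH}$. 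For each $j\in\dN$ I would then apply Theorem~\ref{appl1} with $\sM:=\sN_{j}^{\perp}$ (admissible, because $\sN_{j}^{\perp}\cap\dom T^{1/2}=\sN_{j}\cap\dom T^{1/2}=\{0\}$) and call the resulting pair $\langle T_{1,j},T_{2,j}\rangle$. By Theorem~\ref{appl1} and Remark~\ref{svezh} one has
\[
\dom T_{1,j}^{1/2}=(I+T)^{-1/2}\sN_{j}^{\perp},\qquad \dom T_{2,j}^{1/2}=(I+T)^{-1/2}\sN_{j},
\]
\[
(I+T_{1,j})^{-1}=(I+T)^{-1/2}P_{\sN_{j}^{\perp}}(I+T)^{-1/2},\qquad (I+T_{2,j})^{-1}=(I+T)^{-1/2}P_{\sN_{j}}(I+T)^{-1/2}.
\]

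With these formulas in hand, properties (1)--(7) are bookkeeping. Items (1) and (4) hold because $(I+T)^{-1/2}$ is injective while $\{\sN_{j}^{\perp}\}$ decreases and $\{\sN_{j}\}$ increases; (2) follows from the injectivity of $(I+T)^{-1/2}$ together with $\bigcap_{j}\sN_{j}^{\perp}=\{0\}$; (3) and (5) are exactly Theorem~\ref{appl1}(1); (6) follows from Theorem~\ref{appl1}(2), since $\dom T_{k,j}^{1/2}\subset\dom T^{1/2}$ and the closed forms $T_{k,j}[\cdot,\cdot]$ and $T[\cdot,\cdot]$ then agree on $\cD[T_{k,j}]$ by polarization; and (7) is Theorem~\ref{appl1}(4).

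For the limiting statements, from $s-\lim_{j}P_{\sN_{j}}=I_{\cH}$ and the boundedness of $(I+T)^{-1/2}$ one gets $s-\lim_{j}(I+T_{2,j})^{-1}=(I+T)^{-1}$, hence also $s-\lim_{j}(I+T_{1,j})^{-1}=(I+T)^{-1}-(I+T)^{-1}=0$. To pass to a general $\lambda\in\dC\setminus\dR_{+}$ I would use that, writing $R_{j}:=(I+T_{k,j})^{-1}$ (so $T_{k,j}-\lambda I=R_{j}^{-1}(I-(1+\lambda)R_{j})$),
\[
(T_{k,j}-\lambda I)^{-1}=R_{j}\bigl(I-(1+\lambda)R_{j}\bigr)^{-1};
\]
since $T_{k,j}\ge 0$ gives $\sigma(R_{j})\subset[0,1]$, the factors $(I-(1+\lambda)R_{j})^{-1}$ are uniformly bounded in $j$ and converge strongly (to $I$ when $k=1$, to $(I-(1+\lambda)(I+T)^{-1})^{-1}$ when $k=2$), which together with the strong limits of the $R_{j}$ yields (8) and (9). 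Property (11) then follows from (9) by the standard principle that strong resolvent convergence of non-negative self-adjoint operators entails $\phi(T_{2,j})\to\phi(T)$ strongly for every bounded continuous $\phi$ on $[0,\infty)$, applied to $\phi(t)=e^{-zt}$ with $\RE z\ge 0$.

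The one point requiring a direct argument — and the main obstacle — is (10), because there is no limiting operator. Here I would argue via the spectral theorem: writing $\mu_{j}:=\|E_{T_{1,j}}(\cdot)f\|^{2}$, a measure of total mass $\|f\|^{2}$ on $[0,\infty)$, the estimate $\mu_{j}([0,R])\le(1+R)^{2}\,\|(I+T_{1,j})^{-1}f\|^{2}\to 0$ shows that the spectral measures escape to infinity, so that for $\RE z>0$
\[
\|\exp(-zT_{1,j})f\|^{2}=\int_{[0,\infty)}e^{-2(\RE z)\lambda}\,d\mu_{j}(\lambda)\le\mu_{j}([0,R])+e^{-2(\RE z)R}\|f\|^{2},
\]
which tends to $0$ as $j\to\infty$ upon choosing $R$ large first and then $j$ large. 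The remaining care is purely in checking that each convergence above is genuinely strong and holds precisely on the asserted ranges of $\lambda$ and of $z$.
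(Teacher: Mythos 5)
Your construction is the same as the paper's: apply Theorem~\ref{monot} to obtain the increasing chain $\{\sN_j\}$ of subspaces with trivial intersections with $\dom T^{1/2}=\ran(I+T)^{-1/2}$, then run the Cayley-transform pairing of Theorem~\ref{appl1}/Remark~\ref{svezh} with $\sM=\sN_j^\perp$; your formulas $(I+T_{1,j})^{-1}=(I+T)^{-1/2}P_{\sN_j^\perp}(I+T)^{-1/2}$ and $(I+T_{2,j})^{-1}=(I+T)^{-1/2}P_{\sN_j}(I+T)^{-1/2}$ coincide with the paper's $S_{1,j},S_{2,j}$ (note $I+S=2(I+T)^{-1}$), and items (1)--(7) and the convergence of $(I+T_{k,j})^{-1}$ are obtained exactly as in the paper. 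Where you genuinely diverge is in the limit statements: for (8)--(9) the paper passes from the point $\lambda=-1$ to all $\lambda\in\dC\setminus\dR_+$ by citing Kato's Theorem VIII.1.3, whereas you do it by hand via $(T_{k,j}-\lambda I)^{-1}=R_j\bigl(I-(1+\lambda)R_j\bigr)^{-1}$ with the uniform bound coming from $\sigma(R_j)\subset[0,1]$ and $\lambda\notin[0,\infty)$ --- a correct, self-contained substitute; for (10) the paper invokes the operator-norm Euler approximation for $m$-sectorial generators, while your spectral-measure argument ($\mu_j([0,R])\le(1+R)^2\|(I+T_{1,j})^{-1}f\|^2\to0$, then split the integral at $R$) is more elementary and avoids the sectorial machinery entirely; for (11) the paper uses the Trotter--Kato approximation theorem, while you use the standard principle that strong resolvent convergence of non-negative self-adjoint operators gives $\phi(T_{2,j})\to\phi(T)$ strongly for bounded continuous $\phi$ on $[0,\infty)$, applied to $\phi(t)=e^{-zt}$, $\RE z\ge0$, which is equally valid. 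The only presentational caveat is that ``applying Theorem~\ref{appl1} with $\sM=\sN_j^\perp$'' really means applying the construction in its proof (Theorem~\ref{appl1} is stated as a pure existence result), but since Remark~\ref{svezh} records exactly the formulas you use, this is not a gap; your proof is complete and, in the analytic part, somewhat more elementary than the paper's, at the cost of redoing by hand what the cited theorems package.
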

\begin{proof} Let $S:=(I-T)(I+T)^{-1}$. Then by Theorem \ref{monot} there
is an increasing sequence $\sN_1\subset\sN_2\subset\ldots$ of
subspaces in $\cH$ such that
\begin{enumerate}
\item $\sN_k\cap\ran (I+S)^{1/2}=\sN^\perp_k\cap \ran (I+S)^{1/2}=\{0\}$ for all
$k\in\dN,$
\item $\bigcap\limits_{k\in\dN}\sN^\perp_k=\{0\},$
\item $s-\lim\limits_{k\to\infty} P_{\sN_k}=I_\cH.$
\end{enumerate}
Then we define
\[
S_{1,j}:=(I+S)^{1/2}P_{\sN_j^\perp}(I+S)^{1/2}-I \ \ {\rm{and}} \ \ S_{2,j}:=(I+S)^{1/2}P_{\sN_j}(I+S)^{1/2}-I,\; j\in\dN.
\]
Due to $\sN^\perp_{1}\supset\sN^\perp_2\supset\ldots$ and $\sN_1\subset\sN_2\subset\ldots$ one obtains
$$I+S\ge I+S_{1,1}\ge
I+S_{1,2}\ge\ldots,\;I+S_{2,1}\le I+S_{2,2}\le\ldots\le I+S.
$$
Hence for $j\in\dN$:
\[
\begin{array}{l}
\ran (I+S_{1,j+1})^{1/2}\subset\ran(I+S_{1,j})^{1/2}\subset\ran
(I+S)^{1/2},\\
\ran (I+S_{2,j})^{1/2}\subset\ran(I+S_{2,j+1})^{1/2}\subset\ran
(I+S)^{1/2} .
\end{array}
\]
Consequently, we get for  $j,l,k\in\dN$:
\[
\begin{array}{l}
\ran(I+S_{1,j})^{1/2}\cap\sN_k=\ran(I+S_{1,j})^{1/2}\cap\sN^\perp_k=\{0\},\\
\ran(I+S_{2,l})^{1/2}\cap\sN_k=\ran(I+S_{2,l})^{1/2}\cap\sN^\perp_k=\{0\}.
\end{array}
\]
Since $s-\lim\limits_{j\to\infty}P_{\sN_j}=I$, one also has
\[
s-\lim\limits_{j\to\infty}(I+ S_{1,j})=0 \ \ {\rm{and}} \ \ s-\lim\limits_{j\to\infty}(I+ S_{2,j})=I+S \ .
\]
Now we define for $j\in\dN$:
$$T_{1,j}:=(I-S_{1,j})(I+
S_{1,j})^{-1} \ \ {\rm{and}} \ \ T_{2,j}:=(I-S_{2,j})(I+ S_{2,j})^{-1} .
$$
Then $\{T_{1,j}\}$ and $\{T_{2,j}\}$ are non-negative self-adjoint
operators, such that (see Theorem \ref{appl1} and Theorem
\ref{monot})
\[ \dom T^{1/2}_{1,j}\cap\dom T=\{0\},\; \dom
T^{1/2}_{2,j}\cap\dom T=\{0\},\; j\in\dN,
\]
and properties (1)--(7) hold true. Since
$(T_{1,j}+I)^{-1}=2(I+S_{1,j})$ and $(T_{2,j}+I)^{-1}=2(I+S_{2,j})$,
we obtain
$$
s-\lim\limits_{j\to\infty}(T_{1,j}+I)^{-1}=0\ \ {\rm{and}} \ \ s-\lim\limits_{j\to\infty}(T_{2,j}+I)^{-1}=2(I+S)=(I+T)^{-1} .
$$
This implies (see \cite[Chapter VIII, Theorem 1.3]{Ka}) that
\[
\begin{array}{l}
s-\lim\limits_{j\to\infty}( T_{1,j}-\lambda I)^{-1}=0,\\
s-\lim\limits_{j\to\infty}( T_{2,j}-\lambda I)^{-1}=(T-\lambda
I)^{-1}
\end{array}
\]
for $\lambda\in\dC\setminus\dR_+$.

In order to prove for all $z$, $\RE z>0$, the limit $s-\lim\limits_{j\to\infty}\exp(-z T_{1,j})=0$ we use the Euler
approximation of the one-parameter semigroup  $\{\exp(-tA)\}_{t\ge 0}$ with $m-\alpha$ sectorial ($\alpha\in[0,\pi/2)$)
generator $A$ in the operator-norm topology \cite{ArlZag2010},\cite{Zag2008}:
\[
||\exp(-t A)-(I+tA/n)^{-n}||\le \cfrac{K_\alpha}{n\,\cos^2\alpha},\;
t\ge 0,\; n\in\dN,
\]
Here $K_\alpha$ depends only on $\alpha$, see \cite{CD}.

Let $\wt T$ be non-negative self-adjoint operator and $\RE z\ge0$. Then for $z=t\,e^{i\f}$ and $|\f|\in[0,\pi/2)$
the operator $A=e^{i\f}\wt T$ is $m-|\f|$-sectorial generator. Put $\wt T=T_{1,j}$. Then
\begin{multline*}
||\exp(-z T_{1,j})f||\le||\left(\exp(-t(e^{i\f}
T_{1,j}))-(I+t(e^{i\f} T_{1,j})/n)^{-n}\right)f||\\
+||(I+t(e^{i\f}T_{1,j})/n)^{-n}f||\\
\le \cfrac{K_{|\f|}}{n\,\cos^2\f}\,||f||+||(I+t(e^{i\f}
T_{1,j})/n)^{-n}f||\\
\le \cfrac{K_{|\f|}}{n\,\cos^2\f}\,||f|| + C(n,\f,t)||(I+\cfrac{te^{i\f}}{n}T_{1,j})^{-1}f|| ,
\end{multline*}
for any $f\in \mathcal{H}$ and some constant $C(n,\f,t)$.
Since above it was established that for each $n$, $\f$, $f$, and $t>0$
\[
\lim\limits_{j\to \infty}||(I+\cfrac{te^{i\f}}{n} T_{1,j})^{-1}f||=0 \ ,
\]
we obtain $s-\lim\limits_{j\to\infty}\exp(-z  T_{1,j})=0$.

Applying now the Trotter-- Kato approximation theorem \cite[Chapter III, Section 4.9]{EN}, we obtain
\[
s-\lim\limits_{j\to\infty}\exp(-z T_{2,j})=\exp(-zT)\;\mbox{for all}
\;z,\; \RE z\ge 0 \ ,
\]
and the end of the proof.
\end{proof}
\begin{remark}
{\rm{1)}} The inclusions $\dom  T^{1/2}_{1,j-1}\supset\dom
T^{1/2}_{1,j}$ and equalities $|| T^{1/2}_{1,j}f||= || T^{1/2}f||$
for all $f\in\dom T^{1/2}_{1,j}$ and all $j\in\dN$ mean that
\[
 T\le T_{1,1}\le T_{1,2}\le\ldots\le  T_{1,k}\le\ldots\ ,
\]
in the sense of associated closed quadratic forms \cite{Ka}. It is
proved by T.~Kato \cite[Lemma 1]{Ka1}, that if $H_j$ for $j\in\dN$
are self-adjoint operators in $\cH$ such that $I_\cH\le H_1\le
H_2\le\ldots $ and
$$\cD_0=\{u\in\cH: u\in\bigcap_{j\in\dN}\dom H^{1/2}_j, \;\sup_j||H^{1/2}_j u||<\infty\},
$$
then $\lim\limits_{j\to\infty}H^{-1}_jv=0$ for all $v\perp\cD_0.$

{\rm{2)}} Let ${\bf H}:=\left\{\left<0,h\right>,\;h\in\cH\right\}$. Then ${\bf H}$ is a self-adjoint linear relation
\cite{Ar}. The resolvent $({\bf H}-\lambda I)^{-1}$: $\cH\to\cH$ is identically zero operator. The equality
$s-\lim\limits_{j\to\infty}( T_{1,j}-\lambda I)^{-1}=0$ means that in the strong resolvent limit sense \cite{Ka}
the sequence of operators $\{T_{1,j}\}_{j\in\dN}$ converges to ${\bf H}$.

{\rm{3)}} From Remark \ref{zamech} it follows that for all
$j\in\dN$:
\[
\ran T^{1/2}_{1,j}=\ran T^{1/2}+\dom T^{1/2}_{2,j}\ \ {\rm{and}} \ \
\ran T^{1/2}_{2,j}=\ran T^{1/2}+\dom T^{1/2}_{1,j}.
\]
In particular,
\[
\begin{array}{l}
\ran T^{1/2}\subseteq\ran T^{1/2}_{1,1}\subseteq \ran
T^{1/2}_{1,2}\subseteq\ldots,\; T^{-1}\ge T^{-1}_{1,1}\ge T^{-1}_{1,2}\ge\ldots,\\
\ran T^{1/2}_{2,1}\supseteq \ran T^{1/2}_{2,2}\supseteq
\ldots\supseteq\ran T^{1/2},\;  T^{-1}_{2,1}\le T^{-1}_{2,2}\le\ldots \leq T^{-1}, \\
\end{array}
\]
and
\begin{enumerate}
\item $s-\lim\limits_{j\to\infty}(T^{-1}_{1,j}-\lambda I)^{-1}=-\lambda^{-1} I$,
\item $s-\lim\limits_{j\to \infty} e^{-zT^{-1}_{1,j}}=I,$ for all $z$, $\RE
z>0$,
\item $s-\lim\limits_{j\to\infty}(T^{-1}_{2,j}-\lambda I)^{-1}=(T^{-1}-\lambda I)^{-1}$,
for $\lambda\in\dC\setminus\dR_+,$ {\rm{(}}note that $T^{-1}$ is, in general, a linear relation{\rm{)}}\ ,
\item $\lim\limits_{j\to \infty}
T^{-1}_{2,j}[u,v]=T^{-1}[u, v]$ for all $u,v\in\ran T^{1/2}$.
\end{enumerate}
\end{remark}
\subsection{Beyond the Van Daele--Schm\"{u}dgen and the Brasche--Neidhardt theorems}\label{VD-Sch-B-N}
We start by theorem, which is a weaker (but useful) version of the Van Daele Theorem \ref{daele82-2}.
\begin{theorem}
\label{nov113} Let $B$ be a non-negative unbounded self-adjoint operator in a Hilbert space $\cH$. Suppose that
$\ker B=\{0\}$ and $\ran B\ne \cH.$ Then there exists two linear manifolds $\sD_1$ and $\sD_2$ possessing the
following properties:
\begin{enumerate}
\item $\sD_1\dot+\sD_2=\dom B,$
\item $B \, \sD_1$ and $B \, \sD_2$ are dense in $\cH.$
\end{enumerate}
Moreover, one can choose $\sD_1$ and $\sD_2$ such that
$\sD_1\oplus_B\sD_2=\dom B.$
\end{theorem}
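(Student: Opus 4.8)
The plan is to reduce everything to the ``square root'' picture already developed in Section~\ref{triv-intersec}. Since $B$ is an unbounded non-negative self-adjoint operator with $\ker B=\{0\}$ and $\ran B\ne\cH$, the bounded operator $A:=(B+I)^{-1}$ lies in $\bB^+_0(\cH)$ with $\ran A=\dom B\ne\cH$. Moreover $\ran A^{1/2}=\dom B^{1/2}=\cD[B]$. First I would invoke Theorem~\ref{SchmTh333} (or equivalently Theorem~\ref{SchmTh3}) to pick a subspace $\sM\subset\cH$ with
\[
\sM\cap\ran A^{1/2}=\sM^\perp\cap\ran A^{1/2}=\{0\},
\]
i.e.\ $\sM\cap\dom B^{1/2}=\sM^\perp\cap\dom B^{1/2}=\{0\}$, and set $P:=P_\sM$.

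Next I would define the candidate manifolds by transporting $\sM$ and $\sM^\perp$ through $A=(B+I)^{-1}$, in the spirit of Remark~\ref{svezh}: put
\[
\sD_1:=(B+I)^{-1}\sM=A\sM,\qquad \sD_2:=(B+I)^{-1}\sM^\perp=A\sM^\perp.
\]
Both are linear manifolds inside $\ran A=\dom B$. Since $A=A^{1/2}\cdot A^{1/2}$ is injective and maps the orthogonal decomposition $\cH=\sM\oplus\sM^\perp$ onto $A\sM\dot+A\sM^\perp$, injectivity of $A$ gives $\sD_1\cap\sD_2=\{0\}$ and $\sD_1\dot+\sD_2=A\cH=\dom B$, which is property~(1). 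For property~(2): $B\sD_1=B(B+I)^{-1}\sM=(I-(B+I)^{-1})\sM=(I-A)\sM$, and similarly $B\sD_2=(I-A)\sM^\perp$. Density of $B\sD_1$ in $\cH$ is equivalent to $\overline{(I-A)\sM}=\cH$, i.e.\ to $\sM^\perp\cap\ker(I-A)^{\!*}=\{0\}$ being replaced by the range condition; more directly, using that $I-A$ is injective (as $\ker(I-A)=\ker B=\{0\}$ would need checking — in fact $(I-A)f=0$ means $Af=f$, impossible for $f\ne0$ since $\|A\|\le1$ with no eigenvalue $1$ because $\ker B=\{0\}$ forces $A$ to have no fixed vector), one shows $\overline{(I-A)\sM}=\cH\iff\sM\cap\ran(I-A)^{-1}\cap(\cdots)$. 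The cleanest route: $(I-A)\sM$ dense $\iff$ $\sM^{\perp}\cap\overline{\ran}$ relations; I would instead argue via $\ran(I-A)^{1/2}$ and the shorted-operator equivalence \eqref{nol}. Actually the simplest argument mirrors Remark~\ref{daele82-2-SchmTh1}: $\overline{B\sD_1}=\cH\iff(\forall\phi,\ (Bf,\phi)=0\ \forall f\in\sD_1\Rightarrow\phi=0)$, and $(Bf,\phi)=(f,B\phi)$ only if $\phi\in\dom B$; writing $f=Ag$ with $g\in\sM$ gives $(A g,\psi)=0$ for all $g\in\sM$ where $\psi:=(I-A)\phi$ ranges suitably, forcing $\psi\in\sM^\perp$, and combining with $\sM^\perp\cap\dom B^{1/2}=\{0\}$ yields $\phi=0$.

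For the final refinement $\sD_1\oplus_B\sD_2=\dom B$, I would recall that the graph inner product on $\dom B$ associated with $B$ is, up to the identification $\dom B\subset\dom B^{1/2}=\cD[B]=\ran A^{1/2}$, the one in \eqref{inpr}; under the unitary-like correspondence $A^{1/2}\colon\cH\to\ran A^{1/2}$ the $B$-graph inner product pulls back (on $\ran A^{1/2}$) to the ordinary inner product of $\cH$ up to the bounded factor coming from \eqref{CLOSFORM}. More concretely I would follow the computation at the end of the proof of Theorem~\ref{appl1}: for $u\in\sD_1$, $v\in\sD_2$ write $u=A^{1/2}f$, $v=A^{1/2}h$ with $f\in\sM$, $h\in\sM^\perp$ (using $A\sM=A^{1/2}(A^{1/2}\sM)$ and that $A^{1/2}$ preserves the splitting only after a projection — this is the delicate point), and show $(u,v)_{B^{1/2}}=0$ via \eqref{CLOSFORM}. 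To make the orthogonality exact rather than merely a direct sum, the right move is to replace the naive $A\sM,A\sM^\perp$ by the operators $T_1,T_2$ of Theorem~\ref{appl1} applied to $T:=B$: then $\dom T_k^{1/2}$ already gives the $B^{1/2}$-orthogonal decomposition $\cD[B]=\cD[T_1]\oplus_{B^{1/2}}\cD[T_2]$, and intersecting with $\dom B$ yields $B$-orthogonal manifolds $\sD_k:=\dom T_k^{1/2}\cap\dom B$ — but here one must verify $\sD_1\dot+\sD_2=\dom B$, which is exactly Remark~\ref{svezh}'s decomposition $f_T=f_{T_1}+f_{T_2}$ restricted to $\dom B$, giving $\sD_k=(B+I)^{-1}\sM$ (resp.\ $\sM^\perp$) after all.

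The main obstacle I anticipate is precisely reconciling the two descriptions of $\sD_k$ — namely $(B+I)^{-1}\sM$ on one hand and $\dom T_k^{1/2}\cap\dom B$ on the other — and checking that the $B$-graph orthogonality of $\sD_1,\sD_2$ survives the passage from $\cD[B]=\dom B^{1/2}$ (where Theorem~\ref{appl1}(4) lives) down to the smaller manifold $\dom B$. The density statements in~(2) are routine once one translates them into trivial-intersection statements about $\ran B^{1/2}$ or $\ran A^{1/2}$ via \eqref{nol} and \eqref{doug11}; the genuinely careful bookkeeping is in (1) together with the ``$\oplus_B$'' upgrade, where one uses that $(B+I)^{-1}$ is a bijection of $\cH$ onto $\dom B$ carrying $\cH=\sM\oplus\sM^\perp$ to $\sD_1\dot+\sD_2$, plus the explicit form-formulas \eqref{CLOSFORM} to turn this algebraic direct sum into a $B$-orthogonal one.
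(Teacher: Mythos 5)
Your reduction to a bounded operator and the choice $\sD_1=A\sM$, $\sD_2=A\sM^\perp$ with $A=(B+I)^{-1}$ does give property (1), but the argument breaks down at property (2) and at the $\oplus_B$ upgrade, and in both places the breakdown comes from picking the \emph{wrong} operator range for the trivial-intersection condition. For (2): $B\sD_1=(I-A)\sM$ with $I-A=B(B+I)^{-1}$, and $\phi\perp B\sD_1$ is equivalent to $(I-A)\phi\in\sM^\perp$; since $\ker(I-A)=\ker B=\{0\}$, density of $B\sD_1$ is equivalent to $\sM^\perp\cap\ran\bigl(B(B+I)^{-1}\bigr)=\{0\}$. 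Your $\sM$, however, was chosen so that $\sM^\perp\cap\ran A^{1/2}=\sM^\perp\cap\dom B^{1/2}=\{0\}$, and for unbounded $B$ one has $\ran\bigl(B(B+I)^{-1}\bigr)\not\subseteq\dom B^{1/2}$ (by Douglas, the inclusion would require $B^2(B+I)^{-2}\le\lambda (B+I)^{-1}$, i.e.\ boundedness of $\lambda^2/(1+\lambda)$ on $\sigma(B)$), so the vector $(I-A)\phi$ need not lie in $\dom B^{1/2}$ and the step ``combining with $\sM^\perp\cap\dom B^{1/2}=\{0\}$ yields $\phi=0$'' does not follow; nothing in your choice of $\sM$ prevents $\sM^\perp$ from meeting $\ran\bigl(B(B+I)^{-1}\bigr)$ nontrivially. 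For the refinement: with $u=(B+I)^{-1}f$, $v=(B+I)^{-1}g$ one computes $(u,v)_B=\bigl((I+B^2)(B+I)^{-2}f,g\bigr)$, and since $(I+B^2)(B+I)^{-2}$ is not a scalar multiple of $I$, orthogonality of $\sM$ and $\sM^\perp$ does not transfer to $B$-graph orthogonality of $\sD_1,\sD_2$. Your fallback $\sD_k:=\dom T_k^{1/2}\cap\dom B$ (with $T_k$ from Theorem \ref{appl1} applied to $T=B$) is vacuous, because Theorem \ref{appl1}(1) gives precisely $\dom T_k^{1/2}\cap\dom T=\{0\}$; and the claimed identification with $(B+I)^{-1}\sM$ is false, since $(B+I)^{-1}m\in(B+I)^{-1/2}\sM$ would force $(B+I)^{-1/2}m\in\sM\cap\dom B^{1/2}=\{0\}$.

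The missing idea, which is how the paper proceeds, is to parametrize $\dom B$ so that both the action of $B$ and the graph inner product become diagonal: set $S=(I-B^2)(I+B^2)^{-1}$, so $\dom B=\ran(I+S)^{1/2}$ and $B(I+S)^{1/2}h=(I-S)^{1/2}h$. Then for $u=(I+S)^{1/2}f$, $v=(I+S)^{1/2}g$ one gets $(u,v)_B=2(f,g)$, so taking $\sD_1=(I+S)^{1/2}\sM$, $\sD_2=(I+S)^{1/2}\sM^\perp$ makes $\sD_1\oplus_B\sD_2=\dom B$ automatic; and choosing $\sM$ (via Theorem \ref{SchmTh333}, applicable because $\ker(I-S)=\ker B=\{0\}$ and $\ran(I-S)\subseteq\ran B\ne\cH$) so that $\sM\cap\ran(I-S)^{1/2}=\sM^\perp\cap\ran(I-S)^{1/2}=\{0\}$ yields density of $B\sD_k=(I-S)^{1/2}\sM$, $(I-S)^{1/2}\sM^\perp$ directly. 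In short, the trivial-intersection condition must be imposed relative to the range of the operator implementing the action of $B$ in the chosen parametrization (here $(I-S)^{1/2}=\sqrt{2}\,B(I+B^2)^{-1/2}$), not relative to $\dom B^{1/2}$; your argument can be repaired along these lines, but not with the $\sM$ you selected.
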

\begin{proof} Set $S:=(I-B^2)(I+B^2)^{-1}$.
Then
$$B^2=(I-S)(I+S)^{-1},\; B=(I-S)^{1/2}(I+S)^{-1/2}.
$$
Hence
\[
\dom B=\ran (I+S)^{1/2},\; B(I+S)^{1/2}h=(I-S)^{1/2}h,\; h\in\cH.
\]
Since $\ker (I-S)=\{0\}$ and $\ran (I-S)\ne \cH$, there exists a
subspace $\sM$ of $\cH$ such that
\begin{equation}
\label{intflt} \sM\cap\ran
(I-S)^{1/2}=\sM^\perp\cap\ran(I-S)^{1/2}=\{0\}.
\end{equation}
Let
\[
\sD_1:=(I+S)^{1/2}\sM \ \ \ {\rm{and}} \ \ \ \sD_2:=(I+S)^{1/2}\sM^\perp.
\]
It follows then from \eqref{inpr} and \eqref{CLOSFORM} that
\[
\sD_1\oplus_B\sD_2=\dom B.
\]
Since $B \, \sD_1=(I-S)^{1/2} \sM,$ $B \, \sD_2=(I-S)^{1/2}\sM^\perp,$ and \eqref{intflt} holds, we conclude that the sets
$B \, \sD_1$ and $B \, \sD_2$ are dense in $\cH$.
\end{proof}

 In the following theorem our approach elucidates the property of products and squares of unbounded
operators.
\begin{theorem}
\label{prodzer} Let $T$ and $\left<T_1, T_2\right>$ be as in Theorem \ref{appl1}. Define non-negative self-adjoint
operator $\cL:=T^{1/2}$ together with two of its densely defined symmetric restrictions
\[
\dot{\cL}_1:=T^{1/2}\uphar\dom
T^{1/2}_1 \ \ {\rm{and}} \ \ \dot{\cL}_2:=T^{1/2}\uphar\dom T^{1/2}_2.
\]
Then
\begin{enumerate}
\item operators $\dot{\cL}_1$ and $\dot{\cL}_2$ are
closed,
\item $\dom\dot{\cL_1}\cap\dom \cL^2=\dom\dot{\cL_2}\cap\dom
\cL^2=\{0\}$,
\item $\dom\dot{\cL}_1\cap\dom\dot{\cL}_2=\{0\}$ and
  $\dom \cL=\dom\dot{\cL}_1\dot+\dom\dot{\cL}_2$,
\item
$ \dom(\cL\dot{\cL}_1)=\dom(\cL\dot{\cL}_2)=\{0\}, $ in particular,
$ \dom\dot{\cL}^2_1=\dom \dot{\cL}^2_2=\{0\}.
$
\end{enumerate}
If $\ran T=\cH$, then
\begin{enumerate}
\item
$\dot{\cL}_k\cL$ is densely defined,
\item $(\dot{\cL}_k\cL)^*=\cL\dot{\cL}^*_k$, $k=1,2$,
\item $\dom(\dot{\cL}_1\cL)\cap\dom(\dot{\cL}_2\cL)=\{0\}$ and
\[
\dom(\dot{\cL}_1\cL) \dot+ \dom(\dot{\cL}_2\cL)=\dom \cL^2,
\]
\item the operator $\cL^2(=T)$ is the Friedrichs extension of $\dot{\cL}_1\cL$ and $\dot{\cL}_2\cL$,
\item the Kre\u\i n extension of the operator $\dot{\cL}_j\cL$ is
$(\dot{\cL}_j\cL)_{\rm K}= \dot{\cL}_j\dot{\cL}^*_j$, $ j=1,2.$
\end{enumerate}
\end{theorem}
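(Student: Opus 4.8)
The plan is to exploit the factorisation machinery of Subsection~\ref{FrKR} together with the concrete description of $T_1,T_2$ from Theorem~\ref{appl1} and Remark~\ref{svezh}. Recall from Theorem~\ref{appl1}(2) that $\dom T^{1/2}_k\subset\dom T^{1/2}$ and $\|T^{1/2}_k g\|=\|T^{1/2}g\|$ for $g\in\dom T^{1/2}_k$; hence the symmetric operators $\dot{\cL}_k=\cL\uphar\dom T^{1/2}_k$ are \emph{isometric restrictions} of $\cL=T^{1/2}$, and in particular they are closed, since $\dom T^{1/2}_k=\dom T^{1/2}_k$ is a closed subspace of the graph-Hilbert-space $(\dom\cL,(\cdot,\cdot)_\cL)$ (indeed $\|g\|^2_{\cL}=\|g\|^2+\|T^{1/2}_kg\|^2=\|g\|^2_{T^{1/2}_k}$ on $\dom T^{1/2}_k$). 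This gives (1). Statement (3) is immediate from Theorem~\ref{appl1}(3)--(4): $\dom T^{1/2}_1\cap\dom T^{1/2}_2=\{0\}$ and $\dom T^{1/2}_1\dot+\dom T^{1/2}_2=\dom T^{1/2}=\dom\cL$.

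For (2), note $\dom\cL^2=\dom T\subset\dom T^{1/2}$, so $\dom\dot{\cL}_k\cap\dom\cL^2=\dom T^{1/2}_k\cap\dom T=\{0\}$ by Theorem~\ref{appl1}(1); this is where the whole trivial-intersection construction is used. For (4), observe that $\dom(\cL\dot{\cL}_k)=\{g\in\dom\dot{\cL}_k:\dot{\cL}_kg\in\dom\cL\}=\{g\in\dom T^{1/2}_k: T^{1/2}g\in\dom T^{1/2}\}$. If $g$ lies in this set then $g\in\dom T^{1/2}$ and $T^{1/2}g\in\dom T^{1/2}$, i.e.\ $g\in\dom T$; combined with $g\in\dom T^{1/2}_k$ and (2) this forces $g=0$. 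Thus $\dom(\cL\dot{\cL}_k)=\{0\}$, and a fortiori $\dom\dot{\cL}^2_k\subset\dom(\cL\dot{\cL}_k)=\{0\}$ since $\dot{\cL}_k\subset\cL$.

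Now assume $\ran T=\cH$, so $\cL=T^{1/2}$ is boundedly invertible and $\ran\cL=\cH$. The key point is that $\cL$ is closed, densely defined, bounded below, $\cL=\cL^*$, and $\dot{\cL}_k\subset\cL$ with $\cran\dot{\cL}_k=\cran(\cL\uphar\dom T^{1/2}_k)=\ovl{T^{1/2}\dom T^{1/2}_k}$; by Remark~\ref{zamech}(b) (or directly, since $\dom T^{1/2}_k$ is dense-ranged under $T^{1/2}$ on a subspace whose $T^{1/2}$-image is dense by Theorem~\ref{appl1}(1)=\eqref{novdok}), $\cran\dot{\cL}_k=\cH$. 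The plan is to apply Theorem~\ref{Sim} with $L_1:=\dot{\cL}_k$, $L_2:=\cL$. One must first verify the hypotheses: $L_1\subset L_2$ holds by construction; that $\cA:=L_2^*L_1=\cL\dot{\cL}_k$ is densely defined with $\cA^*=L_1^*L_2=\dot{\cL}_k^*\cL$ — this is the \emph{main obstacle}, since it requires identifying the adjoint of a restriction, and one uses here that $\cL$ is self-adjoint and boundedly invertible, so $\cL\dot{\cL}_k$ is unitarily equivalent (via $\cL$) to a restriction with dense domain; explicitly $\dom(\cL\dot{\cL}_k)=\cL^{-1}(T^{1/2}\dom T^{1/2}_k)$, and using $\ran\cL=\cH$ together with density of $T^{1/2}\dom T^{1/2}_k$ one gets $\ovl{\dom(\cL\dot{\cL}_k)}=\cH$. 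Once the hypotheses of Theorem~\ref{Sim} are checked, items (1)--(2) of the second list follow directly, and item (3) follows because $\dom(\dot{\cL}_k\cL)=\cL^{-1}\dom\dot{\cL}_k=\cL^{-1}\dom T^{1/2}_k$, so $\dom(\dot{\cL}_1\cL)\cap\dom(\dot{\cL}_2\cL)=\cL^{-1}(\dom T^{1/2}_1\cap\dom T^{1/2}_2)=\{0\}$ and $\dom(\dot{\cL}_1\cL)\dot+\dom(\dot{\cL}_2\cL)=\cL^{-1}(\dom T^{1/2}_1\dot+\dom T^{1/2}_2)=\cL^{-1}\dom T^{1/2}=\cL^{-1}\dom\cL=\dom\cL^2$, using the invertibility of $\cL$ to transport the direct-sum decomposition of Theorem~\ref{appl1}. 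Finally, (4) and (5) of the second list are exactly Theorem~\ref{Sim}(1) and Theorem~\ref{Sim}(3): with $L_1=\dot{\cL}_k$, $L_2=\cL$ one has $\cA_{\rm F}=L_1^*L_1=\dot{\cL}_k^*\dot{\cL}_k$ — but here one checks instead, using the isometry property $\|\dot{\cL}_kg\|=\|\cL g\|$, that $\dot{\cL}_k^*\dot{\cL}_k$ acts as $\cL^2=T$ on the appropriate domain, giving $\cA_{\rm F}=\cL^2=T$; and $\cA_{\rm K}=L_2^*P_{\cran L_1}L_2=\cL P_{\cran\dot{\cL}_k}\cL=\dot{\cL}_k\dot{\cL}_k^*$ (the last equality again via $\cran\dot{\cL}_k$ and the factorisation $\dot{\cL}_k\dot{\cL}_k^*=\cL P_{\cran\dot{\cL}_k}\cL$ valid for isometric restrictions of self-adjoint operators). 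I expect the delicate points to be the adjoint identifications $(\cL\dot{\cL}_k)^*=\dot{\cL}_k^*\cL$ and $\cA_{\rm F}=T$, both of which hinge on $\ran T=\cH$ and on the isometry property from Theorem~\ref{appl1}(2).
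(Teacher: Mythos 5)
Your first list is handled correctly and essentially as in the paper: closedness of $\dot{\cL}_k$ from the norm identity $\|T^{1/2}g\|=\|T^{1/2}_kg\|$ on $\dom T^{1/2}_k$, item (2) directly from Theorem \ref{appl1}(1), item (3) from the decomposition $\dom T^{1/2}_1\dot+\dom T^{1/2}_2=\dom T^{1/2}$, and item (4) from $\dom(\cL\dot{\cL}_k)=\dom\dot{\cL}_k\cap\dom\cL^2=\{0\}$ (the paper routes this through $\dot{\cL}_k^*\dot{\cL}_k=T_k$, but your direct computation is equivalent).

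The second half, however, rests on a wrong assignment of the operators in Theorem \ref{Sim}, and the plan as written cannot work. With your choice $L_1:=\dot{\cL}_k$, $L_2:=\cL$ the operator of Theorem \ref{Sim} is $\cA=L_2^*L_1=\cL\dot{\cL}_k$, whose domain is $\{0\}$ by your own item (4); so the hypothesis that $\cA$ be densely defined can never be verified, and your claim $\ovl{\dom(\cL\dot{\cL}_k)}=\cH$ (based on the incorrect formula $\dom(\cL\dot{\cL}_k)=\cL^{-1}(T^{1/2}\dom T^{1/2}_k)$; the correct identity is $\dom(\dot{\cL}_k\cL)=\cL^{-1}\dom T^{1/2}_k$) contradicts item (4). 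The operator you need is $\dot{\cL}_k\cL$, which arises from Theorem \ref{Sim} with the opposite assignment $L_1:=\cL$, $L_2:=\dot{\cL}_k^*$ (then $L_1\subset L_2$ because $\cL=\cL^*\subset\dot{\cL}_k^*$), giving $\cA=\dot{\cL}_k\cL$, $\cA^*=\cL\dot{\cL}_k^*$, $\cA_{\rm F}=L_1^*L_1=\cL^2=T$ and $\cA_{\rm K}=L_2^*P_{\cran L_1}L_2=\dot{\cL}_kP_{\cran\cL}\dot{\cL}_k^*=\dot{\cL}_k\dot{\cL}_k^*$, using $\ran\cL=\cH$. Two subsidiary claims of yours are also false: $L_1^*L_1=\dot{\cL}_k^*\dot{\cL}_k$ does not ``act as $T$'' --- by the first representation theorem $\dot{\cL}_k^*\dot{\cL}_k=T_k$, and $\dom T_k\cap\dom T=\{0\}$; and $\cran\dot{\cL}_k=\cH$ fails exactly in the case $\ran T=\cH$ under consideration, since with $S=(I-T)(I+T)^{-1}$ one has $\ran\dot{\cL}_1=(I-S)^{1/2}\sM$ and $(I-S)^{1/2}$ is boundedly invertible when $T\ge cI$, so this range is a proper closed subspace (consistently, the Kre\u\i n extension $\dot{\cL}_k\dot{\cL}_k^*$ differs from the Friedrichs extension $T$). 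Finally, even with the correct assignment one must still prove the hypotheses of Theorem \ref{Sim}: density of $\dom(\dot{\cL}_k\cL)$ and the adjoint identity $(\dot{\cL}_k\cL)^*=\cL\dot{\cL}_k^*$. The paper obtains the first from $\dom(\dot{\cL}_k\cL)=\cL^{-1}\dom\dot{\cL}_k$ being dense in $\dom\cL$ in the graph norm, and the second from $\ker(\dot{\cL}_k\cL)^*=\ker\dot{\cL}_k^*$ together with the decompositions $\dom\dot{\cL}_k^*=\dom\cL\dot+\ker\dot{\cL}_k^*$ and $\dom(\dot{\cL}_k\cL)^*=\dom\cL^2\dot+\ker(\dot{\cL}_k\cL)^*$; your item (3) of the second list (transporting the direct sum by $\cL^{-1}$) is the only part of this half that goes through as written.
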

\begin{proof} Since for all $\f\in\dom T^{1/2}_1=\dom \dot{\cL}_1$ one has
\[
||\dot{\cL}_1\f||^2=||T^{1/2}\f||^2=||T^{1/2}_1\f||^2 ,
\]
we get that $\dot{\cL}_1$ is closed operator and the \textit{first representation theorem} \cite{Ka} leads to
equality $\dot{\cL}^*_1\dot{\cL}_1=T_1$. Similarly the operator $\dot{\cL}_2$ is closed and $\dot{\cL}^*_2\dot{\cL}_2=T_2.$
Taking into account that $\dom T_k\subset\dom T^{1/2}_k$ and $\dom T^{1/2}_k\cap\dom T=\{0\}$ we obtain
$\dom T_k\cap\dom T=\{0\}$,
$k=1,2.$ This means that
 \[
\dom\dot{\cL_1}\cap\dom \cL^2=\dom\dot{\cL_2}\cap\dom \cL^2=\{0\}.
 \]
 Hence
 \begin{equation}
 \label{predpr}
\dom(\dot{\cL}^*_k\dot{\cL}_k)\cap\dom \cL^2=\{0\}, k=1,2.
 \end{equation}
The condition $\cL\supset\dot{\cL}_k$ leads to equality
$$
\dom(\dot{\cL}^*_k\dot{\cL}_k)\cap \dom \cL^2=\dom (\cL\dot{\cL}_k).
$$
Then \eqref{predpr} yields $\dom(\cL\dot{\cL}_k)=\{0\}$, $k=1,2$.

Suppose that $\ran T=\cH$. Then the operators $\cL$, $\dot{\cL}_1\cL$,
and $\dot{\cL}_2\cL$ are \textit{positive definite}, see Section \ref{Int}. It follows then that
\[
\dom(\dot{\cL}_k\cL)=\cL^{-1}\dom\dot{\cL}_k \ \ {\rm{and}} \ \
(\dot{\cL}_k\cL)(\cL^{-1}\f)=\dot{\cL}_k\f,\;\f\in\dom\dot{\cL}_k,\;
k=1,2.
\]
This yields, that $\dom(\dot{\cL}_k\cL)$ is dense in $\dom\cL$ with respect to the graph-norm in $\dom \cL$.
Hence, the operator $\dot{\cL}_k\cL$ is densely defined in $\cH$ and, moreover, the
Friedrichs extensions of $\dot{\cL}_k\cL$ (for k=1,2) coincide with operator $\cL^2$, see Section \ref{FrKR}
and Theorem \ref{Sim}.

Note that $\ker(\dot{\cL}_k\cL)^* =\ker \dot{\cL}^*_k$. Therefore, relations
\[
\dom\dot{\cL}_k^*=\dom\cL\dot+\ker \dot{\cL}^*_k \ \ {\rm{and}} \ \
\dom(\dot{\cL}_k\cL)^*=\dom\cL^2\dot+\ker (\dot{\cL}_k\cL)^*
\]
lead to the equality $(\dot{\cL}_k\cL)^*=\cL\dot{\cL}^*_k$, $k=1,2$.

The equality $\dom\dot{\cL}_1\dot+\dom\dot{\cL}_2=\dom\cL$ implies
that $\dom(\dot{\cL}_1\cL)\cap\dom({\dot\cL}_2\cL)=\{0\}$ and
\[
\dom(\dot{\cL}_1\cL)\dot+\dom(\dot{\cL}_2\cL)\subseteq\dom \cL^2.
\]
Let $f\in\dom\cL^2=\dom T$. Then $\cL f=T^{1/2}f\in\dom T^{1/2}$.
Due to the direct decomposition
\[
\dom T^{1/2}_1\dot+\dom T^{1/2}_2=\dom T^{1/2},
\]
we get $T^{1/2}f=\f_1+\f_2$, where $\f_k\in\dom T^{1/2}_k$, $k=1,2.$ The equality $\ran T^{1/2}=\cH$ implies that
$\f_k=T^{1/2}h_k$, $h_k\in\dom T^{1/2}$. Since $\f_k=T^{1/2}h_k\in\dom T^{1/2}_k$, $k=1,2$, we obtain that
$h_k\in\dom(\dot{\cL}_k\cL)$, $k=1,2$, and therefore
\[
f=T^{-1/2}\f_1+T^{-1/2}\f_2=h_1+ h_2.
\]
So we proved
\[
\dom \cL^2\subseteq \dom(\dot{\cL}_1\cL)\dot+\dom(\dot{\cL}_2\cL),
\]
which implies
\[
\dom \cL^2= \dom(\dot{\cL}_1\cL)\dot+\dom(\dot{\cL}_2\cL).
\]

Finally, since $\ran \cL=\cH$, one gets $(\dot{\cL}_j\cL)_{\rm
K}=\dot{\cL}_j\dot{\cL}^*_j$ for $j=1,2$ (see Theorem \ref{Sim}).
\end{proof}
\begin{remark}
Abstract examples of pairs $\langle\cL_0, \cL\rangle$: $\cL_0\subset\cL$, consisting of a densely defined closed and
non-negative symmetric operator $\cL_0$ and of its non-negative self-adjoint extension $\cL$ such that
\begin{enumerate}
\item $\dom(\cL\cL_0)=\{0\},$
\item $\cL_0\cL$ is densely defined,
$(\cL_0\cL)^*=\cL\cL^*_0,$ and the operator $\cL^2$ is the Friedrichs extension of $\cL_0\cL$
\end{enumerate}
are given in \cite{ArlKov_2013}.
\end{remark}
The next two assertions are strengthened versions of the Van
Daele--Schm\"{u}dgen and Brasche--Neidhardt theorems \cite{Daele2},
\cite{schmud}, \cite{BraNeidh} mentioned in Section \ref{Int}.
\begin{theorem}
\label{novsch} Let $B$ be unbounded self-adjoint operator in a
Hilbert space $\cH$. Then there are infinitely many pairs
$\left<B_1, B_2\right>$ of  densely defined closed restrictions of
$B$ such that
\begin{enumerate}
\item $\dom B_1\cap\dom B_2=\{0\}$;
\item $\dom B=\dom B_1\dot+\dom B_2$;
\item $\dom B_1\cap\dom B^2=\dom B_2\cap\dom B^2=\{0\};$
\item $\dom (BB_1)=\dom (BB_2)=\{0\}$, and in particular,  $\dom
B^2_1=\dom B^2_2=\{0\}$.
\end{enumerate}
If $\ran B=\cH$, then
\begin{enumerate}
\item $B_kB$ is densely defined, k=1,2,
\item the operator $B^2 = (\dot{B}_1 B)_{\rm F} = (\dot{B}_2 B)_{\rm F}$ is the Friedrichs extension of the
operators $\dot{B}_1 B$ and $\dot{B}_2 B$,
\item $(B_kB)^*=BB^*_k,$ $k=1,2$,
\item $\dom (B_1B)\cap\dom (B_2 B)=\{0\}$ and
\[
\dom(B_1B) \dot+ \dom(B_2B)=\dom B^2,
\]
\item the Kre\u\i n extension of the operator $\dot{B}_j B$ is  $(\dot{B}_j B)_{\rm K}=\dot{B}_j \dot{B}^*_j$, $ j=1,2.$
\end{enumerate}
\end{theorem}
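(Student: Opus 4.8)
The plan is to reduce the statement to the non-negative situation already handled in Theorem~\ref{appl1} (and Theorem~\ref{prodzer}) by passing to $|B|$. Set $\cL:=|B|=(B^2)^{1/2}$, an unbounded non-negative self-adjoint operator, and recall the standard facts $\dom\cL=\dom B$, $\cL^2=B^2$ (hence $\dom\cL^2=\dom B^2$), and $\|Bf\|=\|\cL f\|$ for all $f\in\dom B$ (polar decomposition $B=V\cL$ with $V$ a partial isometry). Apply Theorem~\ref{appl1} to the unbounded non-negative self-adjoint operator $T:=\cL^2=B^2$: it yields infinitely many pairs $\langle T_1,T_2\rangle$ of unbounded non-negative self-adjoint operators with $\dom T^{1/2}_k\cap\dom T=\{0\}$, $\dom T^{1/2}_k\subset\dom T^{1/2}$, $\|T^{1/2}g\|=\|T^{1/2}_k g\|$ on $\dom T^{1/2}_k$, $\dom T^{1/2}_1\cap\dom T^{1/2}_2=\{0\}$, and $\dom T^{1/2}=\dom T^{1/2}_1\dot+\dom T^{1/2}_2$. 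For each such pair I would put
\[
B_k:=B\uphar\dom T^{1/2}_k,\qquad k=1,2 .
\]

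First I would check the unconditional part. Each $B_k$ is a densely defined ($\dom T^{1/2}_k$ is dense, $T_k$ being self-adjoint) restriction of $B$, and it is closed because on $\dom T^{1/2}_k$ the graph norm of $B$ coincides with that of $T^{1/2}_k$: indeed $\|B_k f\|=\|Bf\|=\|\cL f\|=\|T^{1/2}f\|=\|T^{1/2}_k f\|$ for $f\in\dom T^{1/2}_k$, and $\dom T^{1/2}_k$ is complete in the $T^{1/2}_k$-graph norm. Items (1) and (2) are exactly $\dom T^{1/2}_1\cap\dom T^{1/2}_2=\{0\}$ and $\dom T^{1/2}_1\dot+\dom T^{1/2}_2=\dom T^{1/2}=\dom B$. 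Item (3) reads $\dom B_k\cap\dom B^2=\dom T^{1/2}_k\cap\dom T=\{0\}$. For (4), $f\in\dom(BB_k)$ forces $f\in\dom B_k\subset\dom B$ and $B_k f=Bf\in\dom B$, i.e. $f\in\dom B_k\cap\dom B^2=\{0\}$; since $B_k\subset B$ this also gives $\dom B^2_k\subseteq\dom(BB_k)=\{0\}$. Infinitely many distinct pairs $\langle B_1,B_2\rangle$ arise from the infinitely many pairs $\langle T_1,T_2\rangle$ supplied by Theorem~\ref{appl1}.

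Next I would treat the case $\ran B=\cH$. Then $B$, being self-adjoint and bijective, is boundedly invertible, so $0\in\rho(B)\cap\rho(B^2)$ and $\ran B^2=\cH$. Here the relevant operators are $B_kB$, whose domain is $\{f\in\dom B:\ Bf\in\dom B_k\}=B^{-1}(\dom B_k)$; this is dense, for if $h\perp B^{-1}(\dom B_k)$ then $B^{-1}h\perp\dom B_k$ (using that $B^{-1}$ is bounded and self-adjoint), hence $B^{-1}h=0$ and $h=0$. All further assertions follow from Theorem~\ref{Sim} with $L_1:=B$ and $L_2:=B_k^*$: indeed $L_1\subset L_2$ (from $B_k\subset B=B^*$ one gets $B_k^*\supset B$), $L_2^*=\overline{B_k}=B_k$, so $L_2^*L_1=B_kB$ is densely defined, and granting the hypothesis $(B_kB)^*=L_1^*L_2=BB_k^*$, Theorem~\ref{Sim} gives that $B^2=B^*B=L_1^*L_1$ is the Friedrichs extension of $B_kB$, that $(B_kB)^*=BB_k^*$, and, since $\cran L_1=\cran B=\cH$, that $(B_kB)_{\rm K}=L_2^*P_{\cran L_1}L_2=B_kB_k^*$ is its Kre\u\i n extension. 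Finally $\dom(B_1B)\cap\dom(B_2B)=B^{-1}(\dom B_1\cap\dom B_2)=\{0\}$, and for $f\in\dom B^2$ one writes $Bf=\f_1+\f_2$ with $\f_k\in\dom B_k$ (item (2)), then $\f_k=Bh_k$ with $h_k\in\dom B$ by surjectivity of $B$, so $h_k\in\dom(B_kB)$ and $f=h_1+h_2$; together with the obvious reverse inclusion this yields $\dom B^2=\dom(B_1B)\dot+\dom(B_2B)$.

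The main obstacle is precisely the verification of the hypothesis $(B_kB)^*=BB_k^*$ required to invoke Theorem~\ref{Sim} — the same delicate point as in the proof of Theorem~\ref{prodzer}. I would argue it as follows: $B_k$ is a closed symmetric restriction of the self-adjoint operator $B$ (symmetry from $B_k\subset B=B^*\subset B_k^*$), and $B_kB$ is a symmetric non-negative restriction of $B^2$ (since $((B_kB)f,f)=\|Bf\|^2\ge 0$) whose closure has $B^2$ as a self-adjoint extension. As $0$ is a real regular point of both $B$ and $B^2$, von Neumann's decomposition at $0$ yields $\dom B_k^*=\dom B\dot+\ker B_k^*$ and $\dom(B_kB)^*=\dom B^2\dot+\ker(B_kB)^*$. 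A short computation identifies $\ker(B_kB)^*=\{h:\ (h,Bu)=0\ \text{for all}\ u\in\dom B_k\}=\ker B_k^*$. Combining these three facts with $B_k^*\supset B$, one gets $\dom(BB_k^*)=\dom B^2\dot+\ker B_k^*=\dom(B_kB)^*$, and since the inclusion $BB_k^*\subseteq(B_kB)^*$ holds automatically, equality follows. It is worth stressing that, unlike in the non-negative case, these $B_k$ are restrictions of $B$ itself and not of $\cL=|B|$, so the construction — and not a mere citation of Theorem~\ref{prodzer} — is genuinely needed when $B$ is sign-indefinite.
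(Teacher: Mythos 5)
Your proof is correct and follows essentially the same route as the paper: apply Theorem~\ref{appl1} to $T=B^2$, set $B_k:=B\uphar\dom T^{1/2}_k$, and, when $\ran B=\cH$, run the argument of Theorem~\ref{prodzer} via Theorem~\ref{Sim} with $L_1=B$, $L_2=B_k^*$, using the regular-point decompositions $\dom B_k^*=\dom B\dot+\ker B_k^*$ and $\dom(B_kB)^*=\dom B^2\dot+\ker(B_kB)^*$ together with $\ker(B_kB)^*=\ker B_k^*$ to obtain $(B_kB)^*=BB_k^*$. The details you supply for this adjoint identity are exactly the ones the paper invokes by referring to the proofs of Theorems~\ref{prodzer} and~\ref{brne}.
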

\begin{proof}
Let $T=B^2$. Then there exists a pair $\left<T_1, T_2\right>$ possessing  properties 1)--4) mentioned in Theorem \ref{appl1}.
Now let
\[
B_1:=B\uphar\dom T^{1/2}_1 \ \ {\rm{and}} \ \  B_2:=B\uphar\dom T^{1/2}_2.
\]
Then $\dom B_1\cap\dom B_2=\{0\}$. In addition $\dom B_1\cap\dom
B^2=\dom B_2\cap\dom B^2=\{0\}.$ Because $\dom B=\dom
\sqrt{B^2}=\dom T^{1/2}$ and $\dom T^{1/2}=\dom T^{1/2}_1\dot+\dom
T^{1/2}_2$ we get
 $\dom B=\dom B_1\dot+\dom B_2$.

 Arguing as in Theorem \ref{prodzer} and taking into account
 that $||B\f||^2=||\sqrt{B^2}\f||,$ $\f\in\dom B$, we get
$B^*_kB_k=T_k,$ $k=1,2.$ Since $\dom T_1\cap\dom T=\dom T_2\cap \dom
T=\{0\}$, we get
\[
\dom (B^*_1B_1)\cap\dom B^2=\dom (B^*_2B_2)\cap\dom B^2=\{0\},
\]
Therefore, since $B^*_1\supset B$, $B^*_2\supset B$, and
\[
 \dom (B^*_kB_k)\cap \dom B^2=\dom (BB_k), \; k=1,2,
 \]
we obtain the equalities
\[
 \dom (BB_1)=\dom (BB_2)=\{0\}.
 \]
The rest of the theorem can be checked similarly to the proof of the corresponding part of Theorem \ref{prodzer}.
\end{proof}
\begin{theorem} \label{brne}
Let $\cB$ be a closed densely defined symmetric operator. Then there are infinitely many pairs
$\left<\cB_1, \cB_2\right>$ of closed densely defined restrictions of $\cB$ such that
\begin{enumerate}
\item $\dom \cB_1\cap\dom \cB_2=\{0\}$;
\item $\dom \cB=\dom \cB_1\dot+\dom \cB_2$;
\item $\dom \cB_1\cap\dom (\cB^*\cB)=\dom \cB_2\cap\dom (\cB^*\cB)=\{0\};$
\item $\dom (\cB^*\cB_1)=\dom (\cB^*\cB_2)=\{0\}$, in particular,  $\dom
\cB^2_1=\dom \cB^2_2=\{0\}$.
\end{enumerate}
{If $\ 0$ is for $\cB$ the point of the regular type: $||\cB f||\ge c||f||$ for some $c>0$ and all $f\in\dom\cB$, then}
\begin{enumerate}
\item $\cB_j\cB^*$ is densely defined,
\item the operator $\cB\cB^*$ is the Friedrichs extension of operators $\cB_1\cB^*$ and $\cB_2\cB^*$,
\item $(\cB_j\cB)^*=\cB\cB^*_j,$ $j=1,2$,
\item $\dom(\cB_1\cB^*)\cap\dom(\cB_2\cB^*)=\ker\cB^*$ and
\[
\dom(\cB_1\cB^*)+\dom (\cB_2\cB^*)=\dom(\cB\cB^*),
\]
\item the Kre\u\i n extension of the operator $\cB_j\cB^*$ is the
operator $\cB_j\cB^*_j$, $ j=1,2.$
\end{enumerate}
\end{theorem}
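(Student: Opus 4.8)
The plan is to reduce everything to Theorem \ref{appl1}, applied to the non-negative self-adjoint operator $T:=\cB^*\cB$, in the same way Theorems \ref{prodzer} and \ref{novsch} were obtained; $T$ now plays the role that $B^2$ played there. Since $\cB$ is closed, densely defined and unbounded (a closed everywhere-defined operator is bounded, and $\|\cB f\|^2=(Tf,f)$ shows $T$ is unbounded exactly when $\cB$ is), $T$ is an unbounded non-negative self-adjoint operator with $\dom T^{1/2}=\dom\cB$ and $\|T^{1/2}f\|=\|\cB f\|$ on $\dom\cB$. First I would apply Theorem \ref{appl1} to $T$, obtaining infinitely many pairs $\langle T_1,T_2\rangle$ of unbounded non-negative self-adjoint operators with $\dom T_k^{1/2}\subset\dom T^{1/2}$, $\|T_k^{1/2}g\|=\|T^{1/2}g\|$ on $\dom T_k^{1/2}$, $\dom T_k^{1/2}\cap\dom T=\{0\}$, $\dom T_1^{1/2}\cap\dom T_2^{1/2}=\{0\}$, and $\dom T_1^{1/2}\dot+\dom T_2^{1/2}=\dom T^{1/2}$. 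Then I set $\cB_k:=\cB\uphar\dom T_k^{1/2}$, $k=1,2$. On $\dom\cB_k=\dom T_k^{1/2}$ the graph norm $\|f\|^2+\|\cB f\|^2$ equals the graph norm $\|f\|^2+\|T_k^{1/2}f\|^2$ of $T_k^{1/2}$, so each $\cB_k$ is a closed, densely defined, symmetric restriction of $\cB$, and the first representation theorem gives $\cB_k^*\cB_k=T_k$.

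With this setup the first four items become a dictionary translation. Items (1), (2) are $\dom\cB_1\cap\dom\cB_2=\dom T_1^{1/2}\cap\dom T_2^{1/2}=\{0\}$ and $\dom\cB=\dom T^{1/2}=\dom T_1^{1/2}\dot+\dom T_2^{1/2}=\dom\cB_1\dot+\dom\cB_2$; item (3) is $\dom\cB_k\cap\dom(\cB^*\cB)=\dom T_k^{1/2}\cap\dom T=\{0\}$. For item (4), since $\cB_k\subset\cB$ are both closed and densely defined, the domain identity for products $L_2^*L_1$ recorded just before \eqref{densedom} gives $\dom(\cB^*\cB_k)=\dom\cB_k\cap\dom(\cB^*\cB)=\{0\}$; and $\cB_k\subset\cB\subset\cB^*$ yields $\cB_k^2\subset\cB^*\cB_k$, hence $\dom\cB_k^2=\{0\}$ as well. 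The "infinitely many" clause is inherited from Theorem \ref{appl1}.

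Next I would treat the case where $0$ is a point of regular type for $\cB$, i.e.\ $\|\cB f\|\ge c\|f\|$ for all $f\in\dom\cB$. Then $(Tf,f)=\|\cB f\|^2\ge c^2\|f\|^2$, so $T$ is positive definite, hence boundedly invertible; since $\ran T=\cB^*\bigl(\cB(\dom T)\bigr)\subset\ran\cB^*$ this forces $\ran\cB^*=\cH$, and $R:=\cB T^{-1}$ is bounded (because $\|Rh\|=\|T^{1/2}T^{-1}h\|=\|T^{-1/2}h\|$), with $\ran R\subset\dom\cB^*$ and $\cB^*R=I$. Using $R$ one shows $\dom(\cB_k\cB^*)$ is dense: for $f\in\dom\cB^*$, approximate $\cB^*f$ by $\varphi\in\dom T_k^{1/2}(=\dom\cB_k)$ and replace $f$ by $f-R(\cB^*f-\varphi)\in\dom\cB^*$, which satisfies $\cB^*\bigl(f-R(\cB^*f-\varphi)\bigr)=\varphi\in\dom\cB_k$, hence lies in $\dom(\cB_k\cB^*)$ and is as close to $f$ as one wishes. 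Granting the adjoint identity $(\cB_k\cB^*)^*=\cB\cB_k^*$, I would then invoke Theorem \ref{Sim} with $L_1:=\cB^*\subset L_2:=\cB_k^*$ (the inclusion forced by $\cB_k\subset\cB$; note $\cB=(\cB^*)^*$, $\cB_k=(\cB_k^*)^*$ since both are closed): it gives that $\cB_k\cB^*$ is densely defined with Friedrichs extension $L_1^*L_1=\cB\cB^*$ and Kre\u\i n extension $L_2^*P_{\cran L_1}L_2=\cB_k P_{\cran\cB^*}\cB_k^*=\cB_k\cB_k^*$, the last step because $\cran\cB^*=\cH$. Finally, item (4) of this part is a direct set computation: $\dom(\cB_1\cB^*)\cap\dom(\cB_2\cB^*)=\{f\in\dom\cB^*:\cB^*f\in\dom T_1^{1/2}\cap\dom T_2^{1/2}=\{0\}\}=\ker\cB^*$, while $\ran\cB^*=\cH$, $\dom T_1^{1/2}\dot+\dom T_2^{1/2}=\dom\cB$ and $\ker\cB^*\subset\dom(\cB_k\cB^*)$ give $\dom(\cB_1\cB^*)+\dom(\cB_2\cB^*)=\dom(\cB\cB^*)$.

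The hard part will be the adjoint identity $(\cB_k\cB^*)^*=\cB\cB_k^*$, and within it the inclusion $(\cB_k\cB^*)^*\subset\cB\cB_k^*$: the opposite inclusion is a routine integration by parts and the density of $\dom(\cB_k\cB^*)$ was dealt with above. As in the proof of Theorem \ref{prodzer}, I expect to get this inclusion from the Kre\u\i n-type decomposition $\dom(\cB_k\cB^*)^*=\dom(\cB\cB^*)\dot+\ker(\cB_k\cB^*)^*$ together with $\ker(\cB_k\cB^*)^*=\ker\cB_k^*$ and the fact that $\cB\cB^*$ is the already identified Friedrichs extension; this is the one spot where the passage from the self-adjoint $B$ of Theorem \ref{novsch} to the merely symmetric $\cB$ needs real care rather than bookkeeping.
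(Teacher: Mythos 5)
Your construction and the first four items are fine, and in fact they reproduce the paper's operators in a slightly more direct way (the paper takes the polar decomposition $\cB=UB$, applies Theorem \ref{novsch} to $B=(\cB^*\cB)^{1/2}$ and sets $\cB_k=UB_k$, which is exactly your $\cB_k=\cB\uphar\dom T_k^{1/2}$); your use of the identity $\dom(\cB^*\cB_k)=\dom\cB_k\cap\dom(\cB^*\cB)$ for item (4), your right inverse $R=\cB T^{-1}$ with $\cB^*R=I$, and your computation of $\dom(\cB_1\cB^*)\cap\dom(\cB_2\cB^*)=\ker\cB^*$ and of the sum of the domains are all correct. The genuine gap is exactly where you flag it: the inclusion $(\cB_k\cB^*)^*\subseteq\cB\cB_k^*$, and the plan you sketch for it does not work. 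The decomposition $\dom(\cB_k\cB^*)^*=\dom(\cB\cB^*)\dot+\ker(\cB_k\cB^*)^*$ is false as a \emph{direct} sum whenever $\ker\cB^*\ne\{0\}$, since $\ker\cB^*$ lies in both summands ($\ker\cB^*\subset\dom(\cB\cB^*)$ and $\ker\cB^*\subset\ker\cB_k^*=\ker(\cB_k\cB^*)^*$); and even as a non-direct sum it is not a quotable fact here, because the Kre\u\i n-type formula $\dom\cA^*=\dom\cA_{\rm F}\dot+\ker\cA^*$ requires $0\in\rho(\cA_{\rm F})$, whereas $\cA=\cB_k\cB^*$ annihilates $\ker\cB^*$ and $\ker(\cB\cB^*)=\ker\cB^*$, so $\cA$ is not positive definite precisely in the genuinely symmetric case (if $\ker\cB^*=\{0\}$, closedness of $\ran\cB$ forces $\ran\cB=\cH$ and you are back in Theorem \ref{novsch}). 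Moreover, inside your proposal the identification of $\cB\cB^*$ as the Friedrichs extension of $\cB_k\cB^*$ comes from Theorem \ref{Sim}, whose hypothesis is the very identity $(\cB_k\cB^*)^*=\cB\cB_k^*$ you are deferring, so as written the plan is circular. Since items (2), (3) and (5) of the second list all rest on this identity, they remain unproved in your argument.

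The paper avoids this by working at the level of $\dom\cB^*$ rather than $\dom(\cB_k\cB^*)^*$: because $0$ is a point of regular type, $\ran\cB$ is closed and $\cB$ admits a self-adjoint extension $\wh\cB$ with $\wh\cB^{-1}\in\bB(\cH)$; then $\dom\cB^*=\dom\wh\cB\dot+\ker\cB^*$, $\dom(\cB_k\cB^*)=\wh\cB^{-1}\dom\cB_k\dot+\ker\cB^*$ with $(\cB_k\cB^*)(\wh\cB^{-1}\f+\psi_0)=\cB_k\f$, and for $x\in\dom(\cB_k\cB^*)^*$ the vector $y:=(\cB_k\cB^*)^*x$ is orthogonal to $\ker\cB^*$, hence $y=\cB g$ for some $g\in\dom\cB$ by closedness of $\ran\cB$, which yields $x\in\dom\cB_k^*$, $\cB_k^*x=g\in\dom\cB$ and $\cB\cB_k^*x=y$, i.e.\ $(\cB_k\cB^*)^*\subseteq\cB\cB_k^*$. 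If you prefer to salvage your own route, note that your $R$-argument in fact gives density of $\dom(\cB_k\cB^*)$ in $\dom\cB^*$ in the \emph{graph norm} of $\cB^*$, which identifies $(\cB_k\cB^*)_{\rm F}=\cB\cB^*$ without Theorem \ref{Sim}; but you would still have to replace the proposed decomposition of $\dom(\cB_k\cB^*)^*$ by an adjoint computation of the above type before Theorem \ref{Sim} can be invoked for the Kre\u\i n extension.
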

 \begin{proof}
Let $\cB=UB$ be the polar decomposition of $\cB$. Here $B=(\cB^*\cB)^{1/2}$ and $U$ is a partial isometry
with $\ker U=\ker \cB^*$ and $\ran U=\cran\cB$.  By Theorem \ref{novsch} there is a pair
$\left<B_1, B_2\right>$ of densely defined closed restrictions of
$B$ such that $\dom B=\dom B_1\dot+\dom B_2$, $\dom B_k\cap\dom B^2=\{0\},$ $k=1,2$, and $\dom (BB_1)=\dom (BB_2)=\{0\}$.
Set $\cB_k=U B_k$, $k=1,2$. Then clearly $\cB_1 $ and $\cB_2$ are closed densely defined (and symmetric) restrictions of
$\cB$ with $\dom\cB_1\cap\dom \cB_2=\{0\}$, $\dom \cB=\dom \cB_1\dot+\dom \cB_2$,
and $\dom \cB_k\cap\dom \cB^*\cB=\{0\},$ $k=1,2$.

Since $\cB^*=BU^*$ \cite{Ka}, we have
$$
\cB^*\cB_k=BU^*UB_k=BB_k, \;k=1,2.
$$
Hence, $\dom (\cB^*\cB_1)=\dom (\cB^*\cB_2)=\{0\}$.

Suppose now that $0$ is the point of the regular type for the operator $\cB$. Then it is well-known that there exists
a self-adjoint extension $\wh\cB$ of $\cB$ with $\wh \cB^{-1}\in\bB(\cH)$, and
\[
\begin{array}{l}
\dom\cB^*=\dom\wh \cB\dot+\ker \cB^*,\\
\dom\cB^*_j=\dom\wh \cB\dot+\ker \cB^*_j,
\end{array}
\]
for $j=1,2$. Note that
\[
\begin{array}{l}
\dom(\cB_j\cB^*)=\wh\cB^{-1}\dom\cB_j\dot+\ker \cB^*,\; j=1,2,\\
\dom(\cB\cB^*_j)=\wh\cB^{-1}\dom\cB_j\dot+\ker \cB^*,\; j=1,2,\\
\dom(\cB\cB^*)=\wh\cB^{-1}\dom\cB\dot+\ker\cB^*.
\end{array}
\]
The above equalities and decomposition $\dom \cB=\dom \cB_1\dot+\dom
\cB_2$ leads to $\dom(\cB_1\cB^*)\cap\dom(\cB_2\cB^*)=\ker\cB^*$ and
$\dom(\cB_1\cB^*)+\dom (\cB_2\cB^*)=\dom(\cB\cB^*)$.

Now we show that $\dom(\cB_k\cB^*)$ is dense in $\dom \cB^*$ with respect to the graph inner-product in $\dom\cB^*$.
If for some $h\in\dom\cB^*$ one has
\[
(\cB^*(\wh\cB^{-1}\f_1+\f_0), \cB^*h)+(\wh\cB^{-1}\f_1+\f_0, h)=0
\]
for all $\f_1\in\dom(\cB_1\cB^*)$ and for all $\f_0\in\ker\cB^*$, then
$h=\cB g$ and
\[
(\f_1, \cB\cB^*g+g)=0 \;\mbox{for all}\; \f_1\in\dom\cB_1.
\]
Since $\dom\cB_1$ is dense in $\cH$, we get $g=0$. Thus $\dom(\cB_1\cB^*)$ and similarly $\dom(\cB_2\cB^*)$ are dense in
$\dom \cB^*$ with respect to the graph inner-product in $\dom\cB^*$. This is equivalent to the fact that $\dom(\cB_j\cB^*)$
is dense in $\cH$ and that the Friedrichs extension of $\cB_j\cB^*$ is the operator $\cB\cB^*$ for any of $j=1,2$, i.e.,
$ (\cB_j\cB^*)_{\rm F}=\cB\cB^*.$

Since
\[
\left\{\begin{array}{l} \dom(\cB_1\cB^*)\ni
f=\wh\cB^{-1}\f_1+\psi_0,\; \f_1\in\dom\cB_1,\; \psi_0\in\ker
\cB^*_1,\\
(\cB_1\cB^*)f=\cB_1\f_1 \ ,
\end{array}
\right.
\]
for any $x\in\dom(\cB_1\cB^*)^*$ we get
\[
(\cB_1\f_1, x)=(\wh\cB^{-1}\f_1+\psi_0,(\cB_1\cB^*)^*x)
\]
for all $\f_1\in\dom\cB_1$ and all $\psi_0\in\ker \cB^*_1$. This implies that
\[
\begin{array}{l}
(\cB_1\cB^*)^*x=\cB g,\; g\in\dom\cB_1,\\
x\in\dom\cB^*_1,\; \cB^*_1x=g.
\end{array}
\]
Therefore, if $x\in\dom(\cB_1\cB^*)^*$, then $x\in\dom \cB\cB^*_1$ and $(\cB_1\cB^*)^*x=\cB\cB^*_1x$,
i.e.,
$$(\cB_1\cB^*)^*\subseteq \cB\cB^*_1.$$
On the other hand one has
$$\cB\cB^*_1\subseteq (\cB_1\cB^*)^*.$$
Thus, $(\cB_1\cB^*)^*=\cB\cB^*_1$. Similarly we obtain
$(\cB_2\cB^*)^*=\cB\cB^*_2$. Applying Theorem \ref{Sim} we get that
\[
(\cB_j\cB^*)_{\rm K}=\cB_j\cB^*_j,\; j=1,2.
\]
This completes the proof.
\end{proof}
{Combining Theorem \ref{appl1}, Corollary \ref{nov111}, and Theorems \ref{now11}, \ref{brne} we can summarise them
as the following statement.
\begin{theorem}
\label{nov112} Let $\cB$ be a closed densely defined symmetric operator. Then
\begin{enumerate}
\item for each natural number $n \in \mathbb{N}$ there exists $n$ closed densely
defined restrictions $\{\cB_k\}_{k=1}^{n}$ of the operator $\cB$ such that:
\begin{enumerate}
\item $\dom\cB_k\cap\dom\cB_j=\{0\}$ , $k\ne j,$
\item $\dom \cB_1\dot+\dom\cB_2\dot+\cdots\dot+\dom\cB_n=\dom\cB,$
\item $\dom\cB_k\cap\dom(\cB^*\cB)=\{0\}$ for each $k=1,2,\ldots, n,$
\item $\dom(\cB^*\cB_k)=\{0\}$ for each $k=1,2,\ldots, n,$
\end{enumerate}
\item there exists two infinite sequences $\{\cB_{1,j}\}_{j\in\dN}$ and
$\{\cB_{2,j}\}_{j\in\dN}$ of closed densely defined
 restrictions of the operator $\cB$ such that
\begin{enumerate}
\item
$\cB\supset\cB_{1,1}\supset\cB_{1,2}\supset\cdots\supset\cB_{1,j}\supset\cdots,$
\item
$\cB_{2,1}\subset\cB_{2,2}\subset\cdots\subset\cB_{2,j}\subset\cdots\subset\cB,$
\item $\bigcap_{j\in\dN}\dom \cB_{1,j}=\{0\}$,
\item $\dom\cB_{1,j}\dot+\dom\cB_{2,j}=\dom\cB$,
\item $\dom(\cB^*\cB_{1,j})=\{0\},$ $\dom (\cB^*\cB_{2,j}) =\{0\}$ for
each $j\in\dN$.
\end{enumerate}
\end{enumerate}
\end{theorem}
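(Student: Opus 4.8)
The plan is to reduce the entire statement to the results already established for non-negative self-adjoint operators, transporting them through the polar decomposition of $\cB$, exactly in the spirit of the proof of Theorem \ref{brne}. We may assume $\cB$ is unbounded (the statement being otherwise vacuous, since $\dom(\cB^*\cB)=\cH$ for bounded $\cB$, so no densely defined restriction $\cB_k$ can satisfy $\dom\cB_k\cap\dom(\cB^*\cB)=\{0\}$). Put $T:=\cB^*\cB$ — an unbounded non-negative self-adjoint operator — and $B:=T^{1/2}$, and write the polar decomposition $\cB=UB$ with $U$ the accompanying partial isometry (as in the proof of Theorem \ref{brne}, so that $\cB^*=BU^*$ and $U^*U$ acts as the identity on $\ran B$). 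Then $\dom\cB=\dom B=\dom T^{1/2}=\cD[T]$ and $\|\cB f\|=\|Bf\|=\|T^{1/2}f\|$ for every $f\in\dom\cB$.

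The single device I would use repeatedly is this. Let $R$ be a non-negative self-adjoint operator with $\dom R^{1/2}\subseteq\dom T^{1/2}$ and $\|R^{1/2}f\|=\|T^{1/2}f\|$ for all $f\in\dom R^{1/2}$ — these are precisely the operators produced by Theorem \ref{appl1}(2), Corollary \ref{nov111}, and Theorem \ref{now11} — and put $\cB_R:=\cB\uphar\dom R^{1/2}=U\bigl(B\uphar\dom R^{1/2}\bigr)$. Then, arguing as in the proofs of Theorems \ref{novsch} and \ref{brne}: $\cB_R$ is densely defined (because $R$ is self-adjoint) and symmetric (because $\cB_R\subseteq\cB\subseteq\cB^*$); it is closed, since for $f\in\dom\cB_R$ its graph norm $\|f\|^2+\|\cB f\|^2$ equals $\|f\|^2+\|R^{1/2}f\|^2$, i.e.\ the graph norm of the closed operator $R^{1/2}$ on its complete domain $\dom R^{1/2}$; by the first representation theorem $\cB_R^*\cB_R=R$; and, using $\cB^*=BU^*$ together with the fact that $U^*U$ fixes $\ran B$, one obtains $\cB^*\cB_R=B\bigl(B\uphar\dom R^{1/2}\bigr)$, whence, by the relation $\dom(L_2^*L_1)=\dom(L_1^*L_1)\cap\dom(L_2^*L_2)$ from Subsection \ref{FrKR} applied with $L_1=B\uphar\dom R^{1/2}\subseteq L_2=B$, one has $\dom(\cB^*\cB_R)=\dom R\cap\dom T$. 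Since $\dom R\subseteq\dom R^{1/2}$, the hypothesis $\dom R^{1/2}\cap\dom T=\{0\}$ gives $\dom(\cB^*\cB_R)=\{0\}$, and a fortiori $\dom\cB_R^2=\{0\}$ (as $\cB_R\subseteq\cB^*$).

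With this device the two parts follow at once. For (1), fix $n$ and apply Corollary \ref{nov111} to $T$ to obtain non-negative self-adjoint operators $T_1,\dots,T_n$ enjoying its conclusions; set $\cB_k:=\cB\uphar\dom T_k^{1/2}$. The device gives closed densely defined symmetric restrictions of $\cB$ with $\dom(\cB^*\cB_k)=\dom T_k\cap\dom T=\{0\}$ and $\dom\cB_k^2=\{0\}$ — this is (1)(c),(1)(d); property (1)(a) is $\dom\cB_k\cap\dom\cB_j=\dom T_k^{1/2}\cap\dom T_j^{1/2}=\{0\}$ for $k\ne j$; and (1)(b) reads $\dom\cB_1\dot+\cdots\dot+\dom\cB_n=\cD[T_1]\oplus_{T^{1/2}}\cdots\oplus_{T^{1/2}}\cD[T_n]=\cD[T]=\dom\cB$. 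For (2), apply Theorem \ref{now11} to $T$ to obtain the decreasing family $\{T_{1,j}\}_{j\in\dN}$ and the increasing family $\{T_{2,j}\}_{j\in\dN}$, and set $\cB_{1,j}:=\cB\uphar\dom T_{1,j}^{1/2}$ and $\cB_{2,j}:=\cB\uphar\dom T_{2,j}^{1/2}$; then the monotone inclusions of the form-domains $\dom T_{1,j}^{1/2}$ (decreasing) and $\dom T_{2,j}^{1/2}$ (increasing) yield the chains (2)(a),(2)(b); $\bigcap_{j\in\dN}\dom T_{1,j}^{1/2}=\{0\}$ yields (2)(c); $\dom T_{1,j}^{1/2}\dot+\dom T_{2,j}^{1/2}=\cD[T]=\dom\cB$ yields (2)(d); and the device, using $\dom T_{1,j}^{1/2}\cap\dom T=\dom T_{2,j}^{1/2}\cap\dom T=\{0\}$, yields (2)(e).

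The only point calling for genuine care — and it is not a new difficulty — is the transfer through the polar isometry $U$: that $\cB\uphar\dom R^{1/2}$ is truly closed, that $\cB^*(\cB\uphar\dom R^{1/2})=B(B\uphar\dom R^{1/2})$ (equivalently, that $\cB^*=BU^*$ and that $U^*U$ fixes the range of $B\uphar\dom R^{1/2}$), and that $\cB_R^*\cB_R=R$. Each of these verifications already occurs in the proofs of Theorems \ref{novsch} and \ref{brne}, so the present theorem amounts to a bookkeeping synthesis pulling Corollary \ref{nov111} and Theorem \ref{now11} back through $U$; no new analytic ingredient is needed.
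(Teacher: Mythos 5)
Your proposal is correct and takes essentially the same route the paper intends: Theorem \ref{nov112} is presented there as a summary obtained by combining Theorem \ref{appl1}, Corollary \ref{nov111}, and Theorems \ref{now11} and \ref{brne}, i.e.\ precisely the pull-back of the form-domain decompositions of $T=\cB^*\cB$ through the polar decomposition $\cB=UB$ that you carry out. Your auxiliary verifications (closedness of $\cB\uphar\dom R^{1/2}$ via equality of graph norms, and $\dom(\cB^*\cB_R)=\dom R^{1/2}\cap\dom T=\{0\}$ via $\cB^*=BU^*$) are exactly the arguments already used in the proofs of Theorems \ref{prodzer}, \ref{novsch} and \ref{brne}, so no gap remains.
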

}

\section{Acknowledgements}\label{Acknow}

\noindent Yu.A. was supported by Aix-Marseilles University during his visit the Laboratoire d'Analyse, Topologie,
Probabilit\'{e}s /Institut de Math\'{e}matiques de Marseille  (UMR 7353).


\end{document}